\newtheorem{theorem}{Theorem}[section]
\newtheorem{lemma}[theorem]{Lemma}
\newtheorem{remark}[theorem]{Remark}
\newtheorem{definition}[theorem]{Definition}
\newtheorem{corollary}[theorem]{Corollary}
\newtheorem{proposition}[theorem]{Proposition}
\newtheorem{lem-def}[theorem]{Lemma-Definition}
\numberwithin{equation}{section}
\newcommand{\sst}[1]{\vskip 1mm \noindent\makebox[4mm][r]{\bf #1} \hspace{1mm}}
\newcommand{\stst}[1]{\vskip 1mm \noindent\makebox[4mm][r]{\bf #1} \hspace{6mm}}
\newcommand{\ststst}[1]{\vskip 1mm \noindent\makebox[4mm][r]{\bf #1} \hspace{11mm}}
\newcommand{\stststst}[1]{\vskip 1mm \noindent\makebox[4mm][r]{\bf #1} \hspace{16mm}}
\newcommand{\ststststst}[1]{\vskip 1mm \noindent\makebox[4mm][r]{\bf #1} \hspace{21mm}}
\def\as#1{\renewcommand\arraystretch{#1}}
\def\ay{A[y]}
\def\aiy#1{A_{#1}[y]}
\def\bb{\mathcal{B}}
\def\dsc{\operatorname{disc}}
\def\F{\mathbb F}
\def\fm{\F_\m}
\def\fmy{\fm[y]}
\def\ga{\gamma}
\def\gt{\ga_\ty}
\def\imp{\ \Longrightarrow\ }
\def\iso{\ \lower .6ex\hbox{$\stackrel{\lra}{\mbox{\tiny $\sim\,$}}$}\ }
\def\j{\mathbf{j}}
\def\la{\lambda}
\def\ll{\mathcal{L}}
\def\lra{\longrightarrow}
\def\m{\mathfrak{m}}
\def\md#1{\ \mbox{\rm(mod }{#1})}
\def\mm{\mathcal{M}}
\def\mma{\m\in\mx(A)}
\def\mpa{\mx_p(A)}
\def\mn{\operatorname{Min}}
\def\mx{\operatorname{Max}}
\def\N{\mathbb N}
\def\n{\mathfrak{n}}
\def\om{\omega}
\def\oo{\mathcal{O}}
\def\op{\operatorname}
\def\ord{\operatorname{ord}}
\def\p{\mathfrak{p}}
\def\ptm{\pp_{\ty_\m}}
\def\pp{\mathcal{P}}
\def\pt{\pp_\ty}
\def\Q{\mathbb Q}
\def\qb{\overline{\Q}}
\def\R{\mathbb R}
\def\rd{\operatorname{red}}
\def\rdm{\rd_\m}
\def\rdn{\rd_N}
\def\rdp{\rd_p}
\def\rep{\operatorname{Rep}}
\def\sg{\mathfrak{s}}
\def\sii{\ \Longleftrightarrow\ }
\def\ss{\mathcal{S}}
\def\st{\mathcal{S}_{\op{trm}}}
\def\t{\theta}
\def\tcal{\mathcal{T}}
\def\ty{\mathbf{t}}
\def\v2{v_2^{(2)}}
\def\Z{\mathbb Z}
\def\zkp{\Z_{K,p}}
\def\zp{\Z_{(p)}}
\newtheorem{alg}[theorem]{Algorithm}
\newlength{\alginputwidth}
\newlength{\algtmp}
\newcommand{\Algo}[5]
            {
            \begin{alg}[#1] \label{#2}{$\;$}\rm
                \\
\mbox{\enspace}
                \rlap{\rm Input: }\phantom{\rm Output: }
\parbox[t]{\alginputwidth}{#3}
                \\
\mbox{\enspace}
                {\rm Output: }
\parbox[t]{\alginputwidth}{#4}
\parskip0pt
\begin{list}{}{\setlength{\leftmargin}{0pt}}
\item                #5
\end{list}
            \end{alg}
            \goodbreak}
\title[Square-free OM computation of global integral bases]{Square-free OM computation of global integral bases}
\author[Gu\`ardia]{Jordi Gu\`ardia}
\address{Departament de Matem\`atiques, Universitat Politècnica de Catalunya, \ Escola Polit\`ecnica Superior d'En\-ginyeria de Vilanova i la Geltr\'u, Av. V\'\i ctor Balaguer s/n. E-08800 Vilanova i la Geltr\'u, Catalonia, Spain}
\email{jordi.guardia-rubies@upc.edu}
\author[Nart]{Enric Nart}
\address{Departament de Matem\`{a}tiques,
         Universitat Aut\`{o}noma de Barcelona,
         Edifici C\\ E-08193 Bellaterra, Barcelona, Catalonia, Spain}
\email{nart@mat.uab.cat}
\date{July, 2018}
\keywords{integral basis; OM algorithm; Newton polygon; types}
\subjclass[2010]{Primary 11R04; Secondary 11Y40}
\thanks{Partially supported by grants MTM2015-66180-R  and MTM2016-75980-P from the Spanish MEC}
\thanks{To appear in {\em Algebra and Number Theory}}
\begin{document}
\begin{abstract}
For a prime $p$, the OM algorithm finds the $p$-adic factorization of an irreducible polynomial $f\in\Z[x]$ in polynomial time. This may be applied to construct $p$-integral bases in the number field $K$ defined by $f$. In this paper, we adapt the OM techniques to work with a positive integer $N$ instead of $p$. As an application, we obtain an algorithm to compute global integral bases in $K$, which does not require a previous factorization of the discriminant of $f$.
\end{abstract}

\maketitle


\section*{Introduction}

The OM algorithm is a $p$-adic polynomial factorization method developed by Montes \cite{m}, based on ideas of Ore and MacLane \cite{mcla,mclb,ore}. For a number field $K$ with defining polynomial $f\in\Z[x]$, the OM algorithm may be applied to compute $p$-integral bases for the different prime factors $p$ of the discriminant $\dsc(f)$ of $f$ \cite{newapp,bases,gen}. Hence, it facilitates the computation of a global integral basis, as long as a factorization of $\dsc(f)$ is available. Since integer factorization is a heavy task, this approach is unfeasible in practice if $f$ has a large degree or large coefficients.

It is well-known that the determination of the ring of integers of a number field is a computationally hard problem. Indeed, it is polynomial-time equivalent to  the square-free factorization of integers (\cite{C1},\cite{C2},\cite{BL}). Chistov  described in (\cite{C1}, \cite{C2})  a polynomial time algorithm to determine integral bases of number fields given an oracle to find square-free factorizations. It is based upon a previous algorithm of factorization of polynomials over complete fields, where Newton polygons of first order are crulcial.

Buchmann and Lenstra, inspired in a normalization criterion by Grauert-Remmert and Zassenhaus, introduced an algorithm which constructs successive augmentations of a given order in $K$ \cite{BL}. Along this process, some splittings of $\dsc(f)$ are obtained as a by-product. At a certain stage, one knows that the maximal order has been reached if certain factors of  $\dsc(f)$ are squarefree.
Therefore, combined with an integer squarefree decomposition routine, this algorithm  computes global integral bases of $K$. T he algorithm is efficient in cases where the splitting of $\dsc(f)$, caused by the order augmentation steps, yields factors which are sufficiently small to admit a feasible squarefree decomposition.

In this paper, we adapt the OM techniques to work with a positive integer $N$ instead of a prime $p$. This provides a method to compute a global integral basis of  $K$ which behaves as the Buchmann-Lenstra algorithm.

For a given prime $p$, the classical OM algorithm computes a tree of \emph{types} whose leaves are in 1-1 correspondence with the $p$-adic irreducible factors of $f$. These types support valuations on $\Q(x)$ extending the $p$-adic valuation, which encode intrinsic data of the $p$-adic irreducible factors.

The tree of types is computed by a branching process based essentially on two tasks: cons\-truction of higher order Newton polygons of $f$ with respect to the supported valuations, and polynomial factorization, over certain finite fields, of the \emph{residual polynomials} of $f$ with respect to certain sides of the Newton polygons.

For an integer $N>1$, we consider similar {\em squarefree (SF) types} supporting pseudo-valuations on $\Q(x)$ extending the $N$-adic pseudo-valua\-tion.

Newton polygons with respect to pseudo-valuations are easy to define, but the residual polynomials of $f$, with respect to the sides of these polygons, have coefficients in certain finite $(\Z/N\Z)$-algebras. Hence, instead of polynomial factorization, we must use a  polynomial squarefree decomposition, which makes sense for these artinian algebras.

The new {\em SF-OM} algorithm computes a tree of SF-types containing intrinsic data of the $p$-adic irreducible factors of $f$,  simultaneously for all prime factors $p$ of $N$. When these data cannot be coherently integrated in one single tree of SF-types, the method yields a splitting of $N$ as a by-product.

This leads to a computation of a global integral basis of $K$, similar in spirit to the Buchmann-Lenstra algorithm. Along the branching process of SF-types, a splitting of $\dsc(f)$ is obtained, and at a certain stage, the accumulated data yield a global integral basis if certain factors are squarefree.

This method speeds up the computations with respect to the classical OM algorithm,  because the number of types to be considered is much smaller in general. For a double reason: a squarefree integer may have many prime factors, and a squarefree factor of a residual polynomial may have many irreducible factors too.

As a consequence, the amount of integer linear algebra necessary to glue the local bases to build up a global basis is drastically reduced.

We made an implementation in {\tt Magma} of the SF-OM algorithm, which is available on request to the authors. The implementation includes a library to work with towers of artinian algebras. Even if this part is much slower than the internal pre-compiled routines of {\tt Magma} for towers of finite fields, the new program is faster, in many cases, than our implementation of the classical Montes algorithm.

The content of the paper is as follows. Section 1 contains basic algorithms for polynomials over artinian $(\Z/N\Z)$-algebras. Section 2 introduces SF-types and their basic properties. Section 3 describes the SF-OM algorithm. In section 4, we show that for any prime factor $p$ of $N$, an SF-type determines a tree of classical $p$-types. This connection is used in section 5 to analyze arithmetic properties of $K$ encoded by SF-types. Section 6, inspired in \cite{bases}, describes how to use the SF-OM algorithm to compute a global integral basis of $K$. In section 7, we discuss a few concrete examples.

\subsection*{Acknowledgements}Claus Fieker suggested to us that an adequate development of Montes' methods modulo $N$, based on polynomial squarefree decomposition over $(\Z/N\Z)$-algebras, could lead to this kind of results. We are indebted to him for his fine intuition.

\section{Polynomial squarefree decomposition over $(\Z/N\Z)$-algebras}\label{secSFD}
\subsection{Inductive artinian algebras}\label{subsecInductive}
For an integer $N>1$, let $A_0=\Z/N\Z$.

Let $A$ be an artinian $A_0$-algebra. That is, $\op{Spec}(A)=\mx(A)$ is finite and discrete. Then, $A$ is isomorphic to a product of local $A_0$-algebras:
\begin{equation}\label{local}
A\simeq \prod\nolimits_{\mma}A_\m.
\end{equation}
Since each $A_\m$ has a finite length as an $A_0$-module, $A$ is a finite set.


The isomorphism (\ref{local}) induces an analogous decomposition of the polynomial ring $A[y]$ in one indeterminate $y$:
$$
\ay\simeq \prod\nolimits_{\mma}\aiy{\m}.
$$
Irreducible polynomials are easily characterized in terms of their components. Also, non-unique factorization pathologies in $\ay$ are easily explained by this decomposition.

For each $\mma$, let $\fm=A/\m$ be the residue field and
$$
\rdm\colon A\lra \fm,\qquad \rdm\colon \ay\lra \fmy
$$
the reduction modulo $\m$ homomorphisms for the rings $A$ and $A[y]$.

\begin{definition}
Let  $t\in\ay$ and denote by $\op{lc}(t)\in A$ its leading coefficient.

We say that $t\in \ay$ is \emph{unitary} if $\op{lc}(t)$ is a unit in $A$.

We say that $t$ is \emph{strongly unitary} if all its non-zero coefficients are units.

We say that $t$ is \emph{squarefree} if it is unitary and $\rdm(t)\in\fmy$ is squarefree for all $\mma$.
\end{definition}

If $s,t\in \ay$ and $t$ is unitary, the natural routine $q,r=\op{Quotrem}(s,t)$
 computes $q,r\in \ay$ such that $s=tq+r$ and $\deg r<\deg t$. Clearly,
\begin{equation}\label{qr}
 s\ay+t\ay=r\ay+t\ay.
\end{equation}

Also, a unitary  $t$ is a \emph{minimal} polynomial:
\begin{equation}\label{minimal}
\deg t=\mn\{\deg s\mid s\in t\ay, s\ne0\}.
\end{equation}

In particular, for a unitary $t$ of positive degree, the chain of ideals generated by the powers of $t$ is strictly decreasing:
$$
\ay\supsetneq t\ay\supsetneq t^2\ay\supsetneq \cdots
$$
Hence, for all nonzero $s\in\ay$ we may define
$$\ord_t(s)=k \quad \mbox{ if }\quad s\in t^k\ay\setminus t^{k+1}\ay.
$$
We agree that $\ord_t(0)=\infty$.

\begin{lemma}\label{augmentation}
Let $t\in\ay$ be a unitary polynomial of degree $n>0$. Let $B=\ay/(t)$ and denote by $z\in B$ the class of $y$. Then,  $1,\,z,\,\dots\,,\,z^{n-1}$ is an $A$-basis of $B$, and the natural map $A\to B$ is injective.
\end{lemma}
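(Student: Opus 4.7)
The plan is to mimic the classical proof that $A[y]/(t)$ is a free $A$-module of rank $\deg t$ when $t$ is monic, adapted to the weaker "unitary" hypothesis. The key point is that $t$ being unitary is exactly what is needed so that division with remainder $(q,r)=\op{Quotrem}(s,t)$ is available, even though $A$ may fail to be a domain. Indeed, the leading coefficient being a unit lets us cancel the top term of $s$ at each step of the long-division algorithm, so division produces a remainder $r$ with $\deg r<\deg t=n$ and $s-tq=r$.

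The spanning step is then immediate: for any $s\in\ay$, write $s=tq+r$ with $\deg r<n$; reducing mod $(t)$ shows that the class of $s$ equals the class of $r$, which is an $A$-linear combination of $1,z,\dots,z^{n-1}$.

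For linear independence, suppose $a_0+a_1z+\cdots+a_{n-1}z^{n-1}=0$ in $B$. Then the polynomial $s:=a_0+a_1y+\cdots+a_{n-1}y^{n-1}$ lies in $t\ay$. Since $\deg s<n=\deg t$, the minimality property (\ref{minimal}) forces $s=0$, hence $a_0=\cdots=a_{n-1}=0$. This uses (\ref{minimal}) in exactly the way it was set up.

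Injectivity of $A\hookrightarrow B$ follows as a by-product: if $a\in A$ maps to $0$, then $a\in t\ay$ but $\deg a\le 0<n$, so (\ref{minimal}) again forces $a=0$. Equivalently, it is a direct consequence of $1$ being part of an $A$-basis. There is no serious obstacle here; the only nontrivial input is the extension of Euclidean division to unitary (not necessarily monic) divisors over a ring with zero divisors, and this is exactly what the hypothesis on $\op{lc}(t)$ provides, as already observed in the paragraph containing (\ref{qr}).
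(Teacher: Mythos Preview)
Your proof is correct and follows essentially the same approach as the paper's. The only cosmetic difference is in the spanning step: you invoke $\op{Quotrem}$ directly, while the paper observes that $z^n\in M$ (since $\op{lc}(t)$ is a unit) and hence $M=B$; both arguments rest on the same division-with-remainder mechanism, and the linear independence and injectivity steps are identical, via (\ref{minimal}).
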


\begin{proof}
Let $M\subset B$ be the sub-$A$-module generated by $1,z,\dots,z^{n-1}$.
Since $z^n\in M$, we have $M=B$. On the other hand, (\ref{minimal}) shows that $1,z,\dots,z^{n-1}$ are $A$-linearly independent and the map $A\to B$ is injective.
\end{proof}

\begin{definition}\label{inductive}
An \emph{inductive} $A_0$-algebra of length $r\ge0$ is an artinian algebra $A$ which may be obtained by a chain of augmentations:
\begin{equation}\label{chain}
A_0\subset A_1\subset \cdots \subset A_r=A,\qquad A_{i+1}=\aiy{i}/(t_i), \quad 0\le i<r,
\end{equation}
for some squarefree, strongly unitary $t_i\in A_i[y]$ such that $t_i(0)\ne0$ for $i>0$.

The \emph{moduli sequence} of $A$ is the list $[t_{-1},t_0,\dots,t_{r-1}]$ of the different ``moduli", starting formally with $t_{-1}:=N$.
\end{definition}


\subsection{Polynomial squarefree decomposition over inductive algebras}


Let $A$ be an inductive algebra of length $r$ as in (\ref{chain}).
In this section, we describe a squarefree decomposition routine (SFD) for polynomials with coefficients in $A$.
We start with a $gcd_A$ routine in $\ay$. Given $s,t\in\ay$, the idea is to mimic Euclid's algorithm to compute a monic polynomial $d=\gcd_A(s,t)$ in $A[y]$ such that
$$
s\ay + t\ay = d\ay.
$$
This is not always possible, but when the method crashes it outputs a factorization in $\ay$ of one of the moduli of $A$.

The $\gcd_A$ routine is defined recursively, assuming that $\gcd_{A_{r-1}}$ is well-defined. At the bottom of the recursion, for $a\in A_0$ it makes sense to define $\gcd_{A_{-1}}(a,N)\in\Z_{>0}$ as the usual $\gcd(\tilde{a},N)$, for any lifting  $\tilde{a}\in\Z$ of $a$.

\Algo{$\gcd_A(s,t)$}{gcdA}
{$s,t\in\ay$, $t\ne0$, where $A$ is an inductive $A_0$-algebra of length $r\ge0$ with moduli sequence
$[t_{-1},t_0,\dots,t_{r-1}]$
}
{Either a proper factor of a modulus of $A$, or a monic $d\in\ay$ such that $s\ay + t\ay = d\ay$. }
{
\begin{enumerate}
\item \quad while $t\ne 0$ do
\item \quad\qquad $a\gets \op{lc}(t)$, \ $b\gets\gcd_{A_{r-1}}(a,t_{r-1})$
\item \quad\qquad  if $b\ne1$ then return $[r-1,b]$  else $t\leftarrow a^{-1}t$
\item \qquad\quad $q,\,r=\op{Quotrem}(s,t)$
\item \qquad\quad $s\gets t$, \ $t\gets r$
\item \quad return $s$
\end{enumerate}
}
\noindent{\bf Convention. }{\it Throughout the paper, we shall simply write $\gcd_A(s,t)=d$ to indicate that the routine $\gcd_A(s,t)$ does not crash and outputs $d$}.\medskip

In step 2, we identify $a\in A$ with a polynomial in $A_{r-1}[y]$.
For the routine to be consistent, the condition $\gcd_{A_{r-1}}(a,t_{r-1})=1$ should imply that $a$ is a unit in $A$. This fact, and the fundamental properties of $\gcd_A$ are contained in the following result.

\begin{lemma}\label{gcd}
For $a\in A$, if $\gcd_{A_{r-1}}(a,t_{r-1})=1$, then $a$ is a unit in $A$.

Moreover, if  $\gcd_A(s,t)=d$ for certain $s,t\in \ay$, then

\begin{enumerate}
\item[(a)] $\ s\ay+t\ay=d\ay$,
\item[(b)] \ For all  $\mma$, $\ \rdm(d)=\gcd\left(\rdm(s),\rdm(t)\right)$ in $\fmy$.
\end{enumerate}
\end{lemma}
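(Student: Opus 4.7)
The plan is to prove the three assertions together by induction on the length $r$ of $A$, with the unit criterion playing an auxiliary role that justifies the normalization step $t\leftarrow a^{-1}t$ inside $\gcd_A$. Specifically, the unit criterion at level $r$ will depend on part (a) applied at level $r-1$; then part (a) at level $r$ follows from a loop-invariant argument once the criterion is available at level $r$; and part (b) is a routine reduction of part (a) modulo $\m$.

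For the base case $r=0$, one has $A=A_0=\Z/N\Z$ and $\gcd_{A_{-1}}(a,N)$ is the ordinary integer gcd, so $\gcd(\tilde a,N)=1$ yields a Bezout relation $u\tilde a+vN=1$, exhibiting $a$ as a unit in $A_0$. For the inductive step of the unit criterion at level $r\geq1$, I would write $A=A_{r-1}[y]/(t_{r-1})$ and lift $a$ to $\tilde a\in A_{r-1}[y]$. Applying part (a) at level $r-1$ (inductive hypothesis) to the hypothesis $\gcd_{A_{r-1}}(\tilde a,t_{r-1})=1$ gives $\tilde aA_{r-1}[y]+t_{r-1}A_{r-1}[y]=A_{r-1}[y]$, hence a Bezout identity $u\tilde a+vt_{r-1}=1$ in $A_{r-1}[y]$. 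Reducing modulo $t_{r-1}$ produces an inverse of $a$ in $A$.

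With the unit criterion available at level $r$, part (a) follows by checking that $sA[y]+tA[y]$ is a loop invariant of the routine. The normalization step is legitimate whenever $b=1$ (this is precisely where the criterion is used) and it does not change the ideal $tA[y]$; the Euclidean step $s=tq+r$ preserves $sA[y]+tA[y]$ by (\ref{qr}); and the swap in step 5 is pure renaming. When the loop exits with $t=0$, the invariant reads $sA[y]=dA[y]$, where $d$ is the returned value.

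For part (b), I first observe that the output $d$ is monic: since $t\neq0$ on input, the loop runs at least once, and after step 3 the variable $t$ is monic, a property that propagates through the swap in step 5 and through all subsequent iterations. Hence $\rdm(d)$ is monic of the same degree as $d$. Reducing the identity from (a) modulo $\m$ yields $\rdm(s)\fmy+\rdm(t)\fmy=\rdm(d)\fmy$ in the PID $\fmy$, and the unique monic generator of this ideal is $\gcd(\rdm(s),\rdm(t))$. The main delicate point in the whole argument is keeping the two-tier induction tidy, ensuring that the unit criterion and part (a) are invoked at the correct level $r-1$ or $r$ as the loop inside $\gcd_A$ recurses into $\gcd_{A_{r-1}}$; no individual step is technically hard.
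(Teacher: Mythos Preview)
Your proposal is correct and follows essentially the same approach as the paper's proof: both argue by induction on the length $r$, using the Bezout identity from part (a) at level $r-1$ to establish the unit criterion at level $r$, then the loop invariant via (\ref{qr}) for part (a) at level $r$, and finally reduction modulo $\m$ for part (b). Your treatment is in fact slightly more explicit than the paper's, particularly in verifying that the output $d$ is monic so that $\rdm(d)$ is genuinely the monic gcd in $\fmy$.
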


\begin{proof}
We prove these properties for the algebras $A_0,A_1,\dots,A_r=A$ in a recursive way. For $r=0$, the condition $\gcd_{A_{-1}}(a,N)=1$ implies trivially that $a$ is a unit in $A_0$. Hence, the routine $\gcd_{A_0}$ is consistent.

Once a routine $\gcd_{A_i}$ is consistent, the equality (a) is a consequence of (\ref{qr})
applied to each division with remainder in step (4). Then, (b) follows from (a) by the compatibility of all operations with reduction modulo $\m$.

Finally, suppose $\gcd_{A_i}(a,t_i)=1$. From $a\aiy{i}+t_i\aiy{i}=\aiy{i}$, we deduce a B\'ezout identity $au+t_iv=1$, proving that $a$ is a unit in $A_{i+1}$.
\end{proof}

With this $\gcd_A$ routine in hand, we can mimic the standard squarefree decomposition routine for polynomials with coefficients in a field of characteristic zero \cite[\S20.3]{shoup}. If our polynomial in $ \ay$ has a not too large degree, the output will be correct.

\Algo{SFD}{sfdA}
{A unitary $f\in\ay$ with $\deg f<p$ for all prime factors $p$ of $N$.
}
{A proper factor of a modulus of $A$, or pairs $(s_1,\ell_1),\dots,(s_k,\ell_k)$, where $s_1,\dots,s_k\in\ay$ are monic squarefree pairwise coprime polynomials, and $f=\op{lc}(f)\,s_1^{\ell_1}\cdots s_k^{\ell_k}$ with $1\le \ell_1<\cdots <\ell_k$. }
{
\begin{enumerate}
\item \quad $f\gets f/\op{lc}(f)$, \  $g\gets f/\gcd_A(f,f')$
\item \quad $\ell\gets 1$, \ $L\gets [\;]$
\item \quad while $f\ne 1$ do
\item \qquad\quad $f\gets f/g$, \ $h\gets\gcd_A(f,g)$, \ $s\gets g/h$
\item \qquad\quad if $s\ne 1$ then append $(s,\ell)$ to $L$
\item \qquad\quad $g\gets h$, \ $\ell\gets \ell+1$
\item \quad return $L$
\end{enumerate}
}

Of course, although not specifically indicated, after every call to $\gcd_A$ the routine ends if we find a proper factor of a modulus of $A$.

The next result is an immediate consequence of Lemma \ref{gcd}.

\begin{lemma}\label{sfd}
Suppose the SFD routine does not crash and outputs a list of pairs $(s_1,\ell_1),\dots,(s_k,\ell_k)$. Then, for any $\mma$, the list $$(\rdm(s_1),\ell_1),\dots,(\rdm(s_k),\ell_k)$$ is the canonical squarefree decomposition of $\rdm(f)$ in $\fmy$.
\end{lemma}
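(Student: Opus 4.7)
The plan is to show that the SFD routine, when its intermediate quantities are reduced modulo $\m$, coincides step-by-step with the classical squarefree decomposition algorithm applied to $\rdm(f)\in\fm[y]$. Since $\fm$ is a field of characteristic $p > \deg f \ge \deg\rdm(f)$, the classical algorithm is known to be correct and to output the canonical squarefree decomposition, from which the claim follows.

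First I would verify that each operation used by SFD commutes with the ring homomorphism $\rdm\colon\ay\to\fmy$. Taking formal derivatives and dividing by a monic polynomial are ring-theoretic operations, hence respect $\rdm$; note also that $\rdm(1)=1$, so $\rdm$ preserves monicity, and in particular the monic outputs of the $\gcd_A$ subroutine (which serve as the only divisors inside SFD) remain monic after reduction. The only nontrivial compatibility needed is
$$
\rdm(\gcd_A(s,t)) \;=\; \gcd(\rdm(s),\rdm(t)),
$$
which is exactly Lemma \ref{gcd}(b). A short induction on the iterations of the while-loop then shows that the reduced internal state $(f,g,\ell)$ of SFD agrees, at every step, with the internal state of the classical algorithm applied to $\rdm(f)$.

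Next I would check that no output term is destroyed after reduction. Each $s_i$ appended to $L$ is a monic polynomial of positive degree, being the quotient $g/h$ of two monic polynomials and being kept only when $s_i\neq 1$. Monicity together with $\rdm(1)=1$ then guarantees that $\rdm(s_i)$ is again monic of the same positive degree, so no pair $(\rdm(s_i),\ell_i)$ collapses or fuses with another. The only potential characteristic-$p$ obstruction to the correctness of the classical algorithm over $\fm$, namely an irreducible factor of $\rdm(f)$ appearing with multiplicity divisible by $p$, is excluded by the hypothesis $\deg f<p$, since all such multiplicities are bounded by $\deg\rdm(f)$.

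I do not foresee a serious obstacle: the argument is essentially a transcription of Lemma \ref{gcd}(b) into each line of SFD, combined with the standard correctness proof of the classical squarefree decomposition algorithm, which applies over $\fm$ thanks to the degree bound.
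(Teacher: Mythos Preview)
Your proposal is correct and follows the same approach as the paper: the paper's entire proof is the one-line remark that the lemma is an immediate consequence of Lemma~\ref{gcd}, and what you have written is precisely the unpacking of that remark, showing step-by-step via Lemma~\ref{gcd}(b) that reduction modulo $\m$ carries the SFD routine to the classical squarefree decomposition algorithm over the field $\fm$.
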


This result justifies that the output polynomials $s_1,\dots,s_k\in\ay$ are squarefree and pairwise coprime.

\begin{remark}\rm
The condition $\deg f<p$ for all $p\mid N$ fits well with our purpose of constructing a global integral basis in a number field of degree $n$. In this context, $\deg f\le n$ and $N$ will be a positive divisor of the discriminant from which  all prime factors $p\le n$ have been removed (cf. section \ref{subsecGlobalBases}).
\end{remark}

\section{Types with respect to pseudo-valuations}\label{secNtypes}

Let $v\colon \oo\to\Z\cup\{\infty\}$ be a discrete valuation on an integral domain $\oo$, and let $\oo_v$ be the completion of $\oo$ at $v$.

A \emph{type} over $(\oo,v)$ is a discrete object parameterizing a certain equivalence class of monic irreducible polynomials in $\oo_v[x]$
\cite{gen}.
Types were introduced by Montes \cite{m} as a tool to perform ``higher dissections", a procedure foreseen by Ore aiming at a polynomial factorization algorithm  in $\oo_v[x]$.

The papers \cite{GMN, gen} contain variant definitions of a type, with slight changes in the normalization of certain data.

Let $N>1$ be an integer. This section introduces SF-types over $(\Z,\ord_N)$, where $\ord_N$ is the \emph{$N$-adic pseudo-valuation} introduced in section \ref{subsecPseudo}.

For $N=p$ prime, $\ord_p$ is a valuation. However, SF-types over $(\Z,\ord_p)$ do not coincide with those introduced by Montes, which become \emph{irreducible} types in our terminology (see Definition \ref{unramified}).

\subsection{Pseudo-valuations}\label{subsecPseudo}

\begin{definition}\label{qv}
A \emph{pseudo-valuation} on $\oo$ is a mapping $v\colon \oo \to \Z\cup\{\infty\}$ satisfying the following conditions for all $a,b\in\oo$:

\begin{enumerate}
\item $v(a)=\infty$ if and only if $a=0$.
\item $v(-1)=0$.
\item $v(ab)\ge v(a)+v(b)$.
\item $v(a+b)\ge \mn\{v(a),v(b)\}$.
\end{enumerate}
\end{definition}

These axioms imply that $v(-a)=v(a)$ for all $a\in \oo$, and equality holds in (4) when $v(a)\ne v(b)$ \cite[\S1]{sarussi}.

\begin{definition}\label{equiv}
We say that $a,b\in\oo$ are $v$-\emph{equivalent} if either $a=b=0$, or $v(a-b)>v(a)$. We then write $a\sim_vb$. Note that this implies $v(a)=v(b)$.
\end{definition}

A pseudo-valuation may be extended to the ring of fractions $\ss_v^{-1}\oo$, where $\ss_v$ is the multiplicatively closed subset of all non-zero \emph{stable} elements:
$$
\ss_v=\left\{a\in\oo\setminus\{0\}\mid v(ab)=v(a)+v(b)\mbox{ for all }b\in\oo\right\}.
$$

Also, $v$ determines a pseudo-valuation on the polynomial ring $\oo[x]$ by:
$$
v\left(a_0+a_1x+\cdots+a_sx^s+\cdots\right)=\mn\{v(a_s)\mid 0\le s\}.
$$

We fix an integer $N>1$, and let $\oo$ be either $\Z$ or the $p$-adic ring $\Z_p$, the latter case only when $N=p$ is a prime number.

Let $A_0=\Z/N\Z$ and denote reduction modulo $N$ by $\rdn\colon \oo\to A_0$. Denote still by
$\rdn\colon \oo[x]\to A_0[y]$ the mapping that reduces modulo $N$ the coefficients of a polynomial and changes the variable $x$ to $y$.

We define the $N$-adic pseudo-valuation
$$\ord_N\colon \oo\to\Z\cup\{\infty\},\qquad \ord_N(a)=k \ \mbox{ if }\ a\in N^k\oo\setminus N^{k+1}\oo.$$

Consider the \emph{residual polynomial operator}
\begin{equation}\label{R0}
R_0\colon \oo[x]\to A_0[y],\quad f\mapsto \rdn\left(f/N^{\ord_N(f)}\right).
\end{equation}
We agree that $R_0(0)=0$. Clearly,
$$\ss_{\ord_N}=\{a\in\oo\mid  R_0(a)\mbox{ is a unit in }A_0\},
$$

\begin{definition}\label{robust0}
A non-zero polynomial $f\in\oo[x]$ is said to be \emph{$N$-robust} if all its non-zero coefficients are stable.
\end{definition}


The next results follow easily from the definitions.

\begin{lemma}\label{v0equiv}
Two polynomials $f,h\in\oo[x]$ are $\ord_N$-equivalent if and only if $\ord_N(f)=\ord_N(h)$ and $R_0(f)=R_0(h)$.
\end{lemma}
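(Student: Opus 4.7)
The plan is to unpack the definitions on both sides and verify that the condition $v(f-h)>v(f)$ translates directly into the equality of reductions. Everything reduces to the observation that $\ord_N$ on polynomials is just the minimum of $\ord_N$ on the coefficients, so divisibility by powers of $N$ can be checked coefficientwise.

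For the forward implication, assume $f\sim_{\ord_N}h$. The trivial case $f=h=0$ is immediate, so suppose $f,h$ are nonzero. By definition we have $\ord_N(f-h)>\ord_N(f)$; as noted just after Definition \ref{qv}, this forces $\ord_N(f)=\ord_N(h)$. Call this common value $k$. Then every coefficient of $f-h$ lies in $N^{k+1}\oo$, which means every coefficient of $f/N^k-h/N^k$ lies in $N\oo$. Applying $\rdn$ coefficientwise yields $R_0(f)=\rdn(f/N^k)=\rdn(h/N^k)=R_0(h)$.

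For the reverse implication, assume $\ord_N(f)=\ord_N(h)=k$ (finite) and $R_0(f)=R_0(h)$. Then $\rdn(f/N^k-h/N^k)=0$, so every coefficient of $f/N^k-h/N^k$ is divisible by $N$, hence every coefficient of $f-h$ lies in $N^{k+1}\oo$. This gives $\ord_N(f-h)\ge k+1>k=\ord_N(f)$, so $f\sim_{\ord_N}h$. The case in which one (and thus both) of $f,h$ is zero is handled by the convention $R_0(0)=0$ together with $\ord_N(0)=\infty$.

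I do not expect any real obstacle: the only subtlety is checking that the definitions on polynomials (where $\ord_N$ is the min over coefficients and $R_0$ is the coefficientwise reduction) are fully compatible with the abstract conditions on a pseudo-valuation. Both the forward and backward steps are essentially a single manipulation with $N^k$.
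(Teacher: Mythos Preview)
Your proof is correct and is exactly the direct unwinding of the definitions that the paper has in mind; in fact the paper does not write out a proof at all, simply remarking that the result ``follow[s] easily from the definitions.'' There is nothing to add.
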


\begin{lemma}\label{v0good}
Let $f,h\in\oo[x]$ and suppose that $f$ is $N$-robust.
\begin{enumerate}
\item $\ord_p(f)=\ord_p(N)\ord_N(f)$ for all prime factors $p$ of $N$.
\item $\ord_N(fh)=\ord_N(f)+\ord_N(h)$ and $R_0(fh)=R_0(f)R_0(h)$.
\end{enumerate}
\end{lemma}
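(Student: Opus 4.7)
The plan is to unpack the $N$-robustness hypothesis, and then run both parts in parallel. Write $f=\sum_i a_ix^i$. The condition that each nonzero $a_i$ is stable says $R_0(a_i)$ is a unit in $A_0$, which means exactly that $a_i=N^{e_i}u_i$ with $e_i=\ord_N(a_i)$ and $u_i\in\oo$ coprime to $N$. Setting $k=\ord_N(f)=\min_i e_i$, we may factor $f=N^k f_1$ with $f_1\in\oo[x]$. Part (1) then drops out: since $u_i$ is coprime to $p$ for every $p\mid N$, we have $\ord_p(a_i)=e_i\,\ord_p(N)$, and taking the minimum over $i$ gives $\ord_p(f)=\ord_p(N)\,\ord_N(f)$.

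For part (2), I would first dispose of the trivial case $h=0$ (where both sides agree by the conventions $\ord_N(0)=\infty$, $R_0(0)=0$). Assuming $h\ne 0$, set $\ell=\ord_N(h)$ and write $h=N^\ell h_1$ with $h_1\in\oo[x]$, so that $\rdn(f_1)=R_0(f)$ and $\rdn(h_1)=R_0(h)\ne0$. Since $fh=N^{k+\ell}f_1h_1$, the inequality $\ord_N(fh)\ge k+\ell$ is automatic, and both claimed equalities of (2) will follow at once once we establish the single fact that $\rdn(f_1)\rdn(h_1)\ne 0$ in $A_0[y]$: indeed, this gives $\ord_N(fh)=k+\ell$, and then $R_0(fh)=\rdn(f_1h_1)=\rdn(f_1)\rdn(h_1)=R_0(f)R_0(h)$.

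The only genuine obstacle is this nonvanishing claim, and the idea is that $N$-robustness forces $R_0(f)$ to be \emph{strongly unitary} in the sense of Definition \ref{inductive}. Concretely, the coefficient of $y^i$ in $R_0(f)=\rdn(f/N^k)$ equals $\rdn(N^{e_i-k}u_i)$, which vanishes when $e_i>k$ and equals the unit $\rdn(u_i)$ when $e_i=k$. In particular, the leading coefficient $c_{i_0}$ of $R_0(f)$ is a unit of $A_0$. For any nonzero $g=\sum_j d_jy^j\in A_0[y]$ with leading coefficient $d_{j_0}\ne 0$, the coefficient of $y^{i_0+j_0}$ in $R_0(f)\,g$ is then $c_{i_0}d_{j_0}\ne 0$, so $R_0(f)\,g\ne 0$. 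Applying this with $g=R_0(h)$ completes the proof. The subtle point, and the reason $N$-robustness cannot be dropped, is precisely that without the leading coefficient being a unit, a product of two nonzero elements of $A_0[y]$ can easily vanish when $N$ is not prime.
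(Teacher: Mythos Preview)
Your proof is correct and is exactly the kind of direct verification the paper has in mind: it merely states that this lemma ``follow[s] easily from the definitions'' and gives no argument. Your unpacking of $N$-robustness as $a_i=N^{e_i}u_i$ with $u_i$ coprime to $N$, and the observation that this makes $R_0(f)$ unitary (indeed strongly unitary), is precisely what is needed, and nothing more elaborate is required.
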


Thus, robust polynomials are stable for the extension of $\ord_N$ to $\oo[x]$. However, they do not form a multiplicatively closed set.

\subsection{Types over $(\oo,\ord_N)$, also called SF-types}\label{subsecNtypes}

An \emph{SF-type $\ty=(t_0)$ of order zero} over $(\oo,\ord_N)$ is determined by the choice of a monic squarefree, strongly unitary $t_0\in A_0[y]$.

Consider $A_1=A_0[y]/(t_0)$, and let $z_0\in A_1$ be the class of $y$.

A \emph{representative} of $\ty$ is a monic $N$-robust $g\in\oo[x]$ such that $R_0(g)=t_0$.\medskip

An \emph{SF-type of order} $r>0$ over $(\oo,\ord_N)$ is a collection of data:
$$
\ty=\left(t_0;(g_1,\la_1,t_1);\dots;(g_r,\la_r,t_r)\right)
$$
distributed into levels $0,1,\dots,r$, such that
\begin{itemize}
\item $\ty'=\left(t_0;(g_1,\la_1,t_1);\dots;(g_{r-1},\la_{r-1},t_{r-1})\right)$ is an SF-type of order $r-1$
\item $g_r\in\oo[x]$ is a representative of $\ty'$
\item $\la_r$ is a positive rational number
\item $t_r\in A_r[y]$ is monic squarefree, strongly unitary, with $t_r(0)\ne0$.
\end{itemize}

By the very definition, from a type of order $r$ we may deduce types of order $0\le i\le r$,  by an adequate truncation:
$$
\ty_i:=\op{Trunc}_i(\ty):=\left(t_0;(g_1,\la_1,t_1);\dots;(g_i,\la_i,t_i)\right).
$$

For the definition of a type to be complete we must define representatives of types of positive order, and describe the artinian inductive algebra $A_{r+1}$ attached to $\ty$.
To this end, we discuss some data and operators that $\ty$ carries at the $r$-th level.
Note that $\ty$ supports analogous objects at the levels $0,1,\dots,r-1$, associated with the truncated types $\ty_0,\dots,\ty_{r-1}$.\medskip

\noindent{\bf Numerical data}

$e_0=m_0=1,\ h_0=\la_0=0,\ f_0=\deg t_0,\ V_0=0$.

$m_r=\deg g_r=e_{r-1}f_{r-1}m_{r-1}=(e_0\cdots e_{r-1})(f_0\cdots f_{r-1})$,

$\la_r=h_r/e_r$, with $h_r,e_r$ positive coprime integers,

$\ell_rh_r+\ell'_re_r=1$, B\'ezout identity determined by $0\le \ell_r<e_r$,

$f_r=\deg t_r$,

$V_r=v_{r-1}(g_r)=e_{r-1}f_{r-1}(e_{r-1}V_{r-1}+h_{r-1})$.\medskip

From the latter recurrence it is easy to deduce the following identity:
\begin{equation}\label{recurrence}
\dfrac{V_r}{e_1\cdots e_{r-1}}=\sum_{1\le j<r}\dfrac{m_r}{m_j}\,\dfrac{h_j}{e_1\cdots e_j}.
\end{equation}

\noindent{\bf Inductive artinian algebra}
$$
A_0\subset A_1\subset\cdots\subset A_r\subset A_{r+1}, \qquad A_{r+1}=A_r[y]/(t_r).
$$
Let $z_r\in A_{r+1}$ be the class of $y$ in $A_{r+1}$; thus, $A_{r+1}=A_r[z_r]=A_0[z_0,\dots,z_r]$.
By Lemma \ref{augmentation}, $A_{r+1}$ is a free $A_r$-algebra with basis $1,z_r,\dots,z_r^{f_r-1}$.

Since $t_r(0)$ is a unit in $A_r$, $z_r$ is a unit in $A_{r+1}$. \medskip

\noindent{\bf Newton polygon operator of order $r$}

Let $v_0:=\ord_N$ be the $N$-adic pseudo-valuation, and let $2^{\R^2}$ be the set of subsets of $\R^2$. For $r>0$ the type $\ty$ determines an operator:
$$
N_r:=N_{v_{r-1},g_r}\colon \oo[x]\to 2^{\R^2}.
$$

The Newton polygon of the zero polynomial is the empty set. For a non-zero $f\in\oo[x]$ we consider its canonical $g_r$-expansion:
\begin{equation}\label{gexpansion}
f=\sum\nolimits_{0\le s}a_sg_r^s,\quad a_s\in\oo[x], \ \deg a_s<m_r.
\end{equation}
For each $s\in\Z_{\ge0}$ we compute $u_s=v_{r-1}(a_sg_r^s)\in\Z_{\ge0}$, and we define $N_r(f)$ as the lower convex hull of the set of points $\left\{(s,u_s)\in\R^2\mid s\ge0,a_s\ne0\right\}$.

The Newton polygon $N_r(f)$ is the union of different adjacent \emph{sides}, whose endpoints are called \emph{vertices} of the polygon.
The typical shape of this polygon is shown in Figure \ref{figNmodel}.

The \emph{length} of $N_r(f)$ is by definition the abscissa of the last vertex. We denote it by  $\ell(N_r(f))=\lfloor \deg(f)/m_r\rfloor$.

\begin{figure}
\caption{Newton polygon of a polynomial $f\in \oo[x]$}\label{figNmodel}
\begin{center}
\setlength{\unitlength}{5mm}
\begin{picture}(21,9.5)
\put(8.8,2.8){$\bullet$}\put(7.8,3.8){$\bullet$}\put(6.8,2.8){$\bullet$}\put(4.8,4.8){$\bullet$}
\put(3.8,3.8){$\bullet$}\put(2.8,5.8){$\bullet$}\put(1.8,7.8){$\bullet$}
\put(-1,1){\line(1,0){12}}\put(0,0){\line(0,1){9}}
\put(7,3){\line(-3,1){3}}\put(4,4){\line(-1,2){2}}\put(7,3.03){\line(-3,1){3}}
\put(4,4.03){\line(-1,2){2}}\put(9,3){\line(-1,0){2}}\put(9,3.02){\line(-1,0){2}}
\put(9.85,4.8){$\bullet$}\put(9,3){\line(1,2){1}}\put(9,3.02){\line(1,2){1}}
\multiput(2,.9)(0,.25){29}{\vrule height2pt}
\multiput(7,.9)(0,.25){9}{\vrule height2pt}
\multiput(10,.9)(0,.25){16}{\vrule height2pt}
\put(5.5,.1){\begin{footnotesize}$\ell(N^-_r(f))$\end{footnotesize}}
\put(1.1,.15){\begin{footnotesize}$\ord_{g_r}(f)$\end{footnotesize}}
\put(9,.15){\begin{footnotesize}$\ell(N_r(f))$\end{footnotesize}}
\multiput(-.1,3)(.25,0){41}{\hbox to 2pt{\hrulefill }}
\put(5.6,7.6){\begin{footnotesize}$N_r(f)$\end{footnotesize}}
\put(-2.4,2.85){\begin{footnotesize}$v_{r-1}(f)$\end{footnotesize}}
\put(-.45,.35){\begin{footnotesize}$0$\end{footnotesize}}

\put(20.8,2.8){$\bullet$}\put(18.8,4.8){$\bullet$}\put(17.8,3.8){$\bullet$}\put(16.8,5.8){$\bullet$}\put(15.8,7.8){$\bullet$}
\put(13,1){\line(1,0){9}}\put(14,0){\line(0,1){9}}
\put(21,3){\line(-3,1){3}}\put(18,4){\line(-1,2){2}}\put(21,3.03){\line(-3,1){3}}
\put(18,4.03){\line(-1,2){2}}
\multiput(16,.9)(0,.25){29}{\vrule height2pt}
\multiput(21,.9)(0,.25){9}{\vrule height2pt}
\put(19.5,.1){\begin{footnotesize}$\ell(N^-_r(f))$\end{footnotesize}}
\put(15.1,.15){\begin{footnotesize}$\ord_{g_r}(f)$\end{footnotesize}}
\multiput(13.9,3)(.25,0){29}{\hbox to 2pt{\hrulefill }}
\put(18,7.6){\begin{footnotesize}$N^-_r(f)$\end{footnotesize}}
\put(11.6,2.85){\begin{footnotesize}$v_{r-1}(f)$\end{footnotesize}}
\put(13.55,.35){\begin{footnotesize}$0$\end{footnotesize}}
\end{picture}
\end{center}
\end{figure}
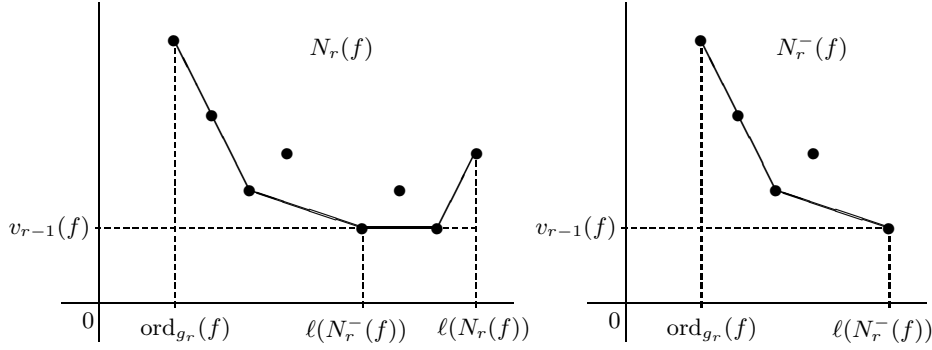

\begin{definition}\label{component}
The polygon $N_r^-(f)$ determined by the sides of negative slope of $N_r(f)$ is called the \emph{principal Newton polygon} of $f$.

If  $N_r(f)$ has no sides of negative slope, then $N_r^-(f)$ is the one-point set formed by the left endpoint of $N_r(f)$.

For any $\la\in\Q_{>0}$, the \emph{$\la$-component} of $N:=N_r^-(f)$ is the segment
$$S_\lambda(N)=\left\{(x,y)\in N\mid y+\la x\mbox{ is minimal\,}\right\}=N\cap L,
$$
where $L$ is the line of slope $-\la$ which first touches $N$ from below.

We denote by $(s_r(f),u_r(f))\in N_r^-(f)$ the coordinates of the left endpoint of the segment $S_r(f):=S_{\lambda_r}(N_r(f))$ (see Figure \ref{figRr}).
\end{definition}

If $N_r(f)$ has a side $S$ of slope $-\la_r$, then $S_r(f)=S$; otherwise, $S_r(f)=\left\{(s_r(f),u_r(f))\right\}$ is a vertex of $N_r(f)$.  \medskip

\noindent{\bf Pseudo-valuation of order $r$}

This is a pseudo-valuation $v_r\colon \oo[x]\to\Z\cup\{\infty\}$. For a non-zero $f\in\oo[x]$ having a $g_r$-expansion as in (\ref{gexpansion}), we define
\begin{equation}\label{posteriori}
v_r(f)=e_r\mn\left\{u_s+s\la_r\mid s\ge0\right\}=\mn\left\{v_r(a_sg_r^s)\mid s\ge0\right\}.
\end{equation}

Assuming that $v_{r-1}$ is a pseudo-valuation, it is easy to check that $v_r$ is a pseudo-valuation too.

Note that $v_r(f)/e_r$ can be reinterpreted as the ordinate where the line $L$ in Figure \ref{figRr} cuts the vertical axis.\medskip


\noindent{\bf Residual polynomial operator of order $r$}
$$
R_r:=R_{v_{r-1},g_r,\la_r}\colon \oo[x]\to A_r[y].
$$

The operator $R_r$ maps $0$ to $0$. For a non-zero $f\in \oo[x]$ with $g_r$-expansion as in (\ref{gexpansion}), we take the following residual coefficients $c_s\in A_r$, for $s\in \Z_{\ge0}$:
$$\as{1.2}
c_s=\left\{\begin{array}{ll}
0,&\mbox{if $(s,u_s)$ lies above }N_r^-(f), \mbox{ or }u_s=\infty,\\
z_{r-1}^{\nu_{r-1}(a_s)}R_{r-1}(a_s)(z_{r-1}),&\mbox{if $(s,u_s)$ lies on } N_r^-(f).
\end{array}
\right.
$$
For $r=1$, we agree that $z_0^{\nu_0(a)}=1$ for all $a\in\oo[x]$, even when $z_0=0$.

For $r>1$ and a nonzero $a \in\oo[x]$, we define:
\begin{equation}\label{nu}
\nu_{r-1}(a)=\ell'_{r-1}s_{r-1}(a)-\ell_{r-1}u_{r-1}(a)\in\Z,
\end{equation}
where $s_{r-1}(a)$, $u_{r-1}(a)$ were introduced in Definition \ref{component}.

Since $z_{r-1}$ is a unit in $A_r$, the element $z_{r-1}^{\nu_{r-1}(a_{s})}\in A_r$ is well defined regardless of the sign of the integer exponent.

Take $\la=h/e$, with $h,e$ positive coprime integers. Let $S$ be the $\la$-component of $N_r(f)$ (Definition \ref{component}), with endpoints having abscissas $s_0\le s'_0$. We define the residual polynomial attached to $\ty$ and $\la$ as:
$$
R_{v_{r-1},g_r,\la}(f)=c_{s_0}+c_{s_0+e}\,y+\cdots+c_{s_0+de}\,y^{d}\in A_r[y],
$$where  $d=(s'_0-s_0)/e$.
Note that $s_0,\,s_0+e,\dots,\,s'_0=s_0+de$ are the abscissas of the points in $S\cap\Z^2$.

Since $\deg a_s<m_r=e_{r-1}f_{r-1}m_{r-1}$, we have $\ell(N_{r-1}^-(a_s))<e_{r-1}f_{r-1}$, so that $\deg R_{r-1}(a_s)<f_{r-1}=\deg t_{r-1}$. Hence, the residual coefficient attached to a point lying on $N_r^-(f)$ is always nonzero. In particular, $c_{s_0}\ne0$ and $c_{s'_0}\ne0$. Thus, $\deg R_{v_{r-1},g_r,\la}(f)=d$.

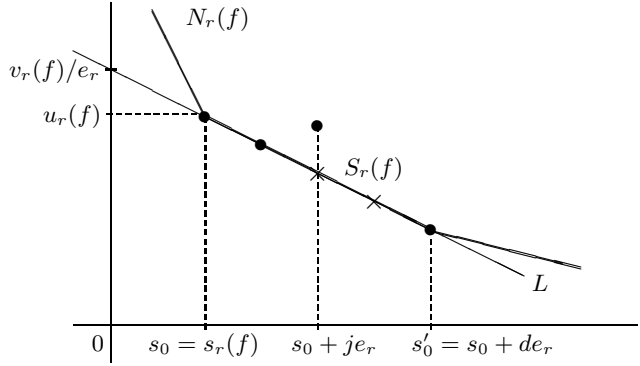
\begin{figure}
\caption{Residual polynomial $R_r(f)$. The line $L$ has slope $-\la_r$.}\label{figRr}
\begin{center}
\setlength{\unitlength}{5mm}
\begin{picture}(14,9)
\put(2.3,5.35){$\bullet$}\put(3.8,4.6){$\bullet$}\put(5.2,3.85){$\times$}\put(6.7,3.1){$\times$}\put(8.3,2.35){$\bullet$}\put(5.3,5.1){$\bullet$}
\put(-1,0){\line(1,0){15}}\put(0,-1){\line(0,1){9.6}}
\put(-1,7.3){\line(2,-1){12}}
\put(2.5,5.55){\line(-1,2){1.4}}\put(2.5,5.59){\line(-1,2){1.4}}
\put(2.5,5.55){\line(2,-1){6}}\put(2.5,5.59){\line(2,-1){6}}
\put(8.5,2.5){\line(4,-1){4}}\put(8.5,2.54){\line(4,-1){4}}
\multiput(2.5,-.1)(0,.25){23}{\vrule height2pt}
\multiput(8.5,-.1)(0,.25){11}{\vrule height2pt}
\multiput(5.5,-.1)(0,.25){22}{\vrule height2pt}
\multiput(-.1,5.6)(.25,0){10}{\hbox to 2pt{\hrulefill }}
\put(4.8,-.7){\begin{footnotesize}$s_0+je_r$\end{footnotesize}}
\put(8,-.7){\begin{footnotesize}$s'_0=s_0+de_r$\end{footnotesize}}
\put(1,-.7){\begin{footnotesize}$s_0=s_r(f)$\end{footnotesize}}
\put(11.2,.9){\begin{footnotesize}$L$\end{footnotesize}}
\put(6.2,4){\begin{footnotesize}$S_r(f)$\end{footnotesize}}
\put(2,8){\begin{footnotesize}$N_r(f)$\end{footnotesize}}
\put(-.5,-.7){\begin{footnotesize}$0$\end{footnotesize}}
\put(-.15,6.8){\line(1,0){.3}}
\put(-2.7,6.6){\begin{footnotesize}$v_r(f)/e_r$\end{footnotesize}}
\put(-1.8,5.4){\begin{footnotesize}$u_r(f)$\end{footnotesize}}
\end{picture}
\end{center}
\end{figure}

\begin{definition}\label{defrobust}
Let $\ty_0$ be a type of order zero.
A polynomial $f\in\oo[x]$ is said to be \emph{$\ty_0$-robust} if it is $N$-robust (Definition \ref{robust0}).

For our type $\ty$ of order $r>0$, we say that $f\in\oo[x]$ is \emph{$\ty$-robust} if
all coefficients $a_s\in\oo[x]$ of the canonical $g_r$-expansion of $f$ are $\ty_{r-1}$-robust and satisfy $\gcd\nolimits_{A_{r-1}}(R_{r-1}(a_{s}),t_{r-1})=1$.
\end{definition}

If $f$ is $\ty$-robust, then $R_{r-1}(a_s)(z_{r-1})$ is a unit in $A_r$
for all $s$, by Lemma \ref{gcd}. In particular, the polynomial $R_{v_{r-1},g_r,\la}(f)$ is strongly unitary for all slopes $-\la$ of $N^-_r(f)$.

Note that, for $r>0$, $\ty$-robustness does not depend on $\la_r$ nor $t_r$.

\begin{definition}\label{repr}
A \emph{representative} of $\ty$ is a $\ty$-robust monic $g\in\oo[x]$ of degree $m_{r+1}:=e_rf_rm_r$, such that $R_r(g)=t_r$.

We denote by $\rep(\ty)\subset \oo[x]$ the set of all representatives of $\ty$.
\end{definition}

The condition $R_r(g)=t_r$ implies that $N_r(g)$ contains a side of slope $-\la_r$ whose projection to the horizontal axis has length $e_rf_r$. Thus, $m_{r+1}$ is the minimal possible degree of a polynomial $g\in\oo[x]$ satisfying this condition.

Hence, if $g$ is a representative of $\ty$, the principal monomial of the $g_r$-expansion of $g$ is $g_r^{e_rf_r}$, and $N_r(g)$ must be one-sided of slope $-\la_r$ with endpoints $(0,e_rf_r(V_r+\la_r))$, $(f_r,e_rf_rV_r)$. In particular, $$v_r(g)=V_{r+1}:=e^2_rf_r(V_r+\la_r)=e_rf_r(e_rV_r+h_r).
$$

In order to enlarge $\ty$ to a type of order $r+1$, a crucial step is the construction of a representative. The construction for classical types (based on \cite[Prop. 3.4]{gen}) applies without changes to SF-types.

The condition of $g$ being $\ty$-robust is not guaranteed by this routine. In any case,  by introducing adequate hooks, we guarantee either a robust output or a non-trivial splitting of some modulus in the sequence $[t_{-1},t_0,\dots,t_r]$.

\subsection{Tree structure on the set of types}
Let us introduce a tree structure on the set $\tcal=\tcal(\oo,\ord_N)$ of all SF-types over $(\oo,\ord_N)$.

The root nodes are the types of order zero, and the previous node of a type $\ty\in\tcal$ of order $r>0$ is $\op{Trunc}_{r-1}(\ty)$. Thus, there is a unique path of length $r$ joining $\ty$ with its root node:

\setlength{\unitlength}{5mm}
\begin{picture}(16,1.5)
\put(4.8,0){$\bullet$}\put(8.8,0){$\bullet\qquad \ \cdots\cdots $}\put(14.8,0){$\bullet$}\put(18.8,0){$\bullet$}
\put(1.8,.1){\begin{footnotesize}$\op{Trunc}_{0}(\ty)$\end{footnotesize}}
\put(5,.2){\line(1,0){4}}\put(15,.2){\line(1,0){4}}
\put(7.6,0.6){\begin{footnotesize}$\op{Trunc}_{1}(\ty)$\end{footnotesize}}
\put(13.5,0.6){\begin{footnotesize}$\op{Trunc}_{r-1}(\ty)$\end{footnotesize}}
\put(19.5,0.1){\begin{footnotesize}$\ty$\end{footnotesize}}
\end{picture}\medskip

The connected components of $\tcal$ are the subtrees $\tcal_\varphi$ of all types $\ty$ with $\op{Trunc}_{0}(\ty)=(\varphi)$, for a monic squarefree strongly unitary $\varphi\in A_0[y]$.

The branches of a type $\ty$ of order $r$ are types of the form  $\ty^*=(\ty;(g,\la,t))$, obtained by enlarging $\ty$ with data $(g,\la,t)$ at the $(r+1)$-th level.

\subsection{Basic properties of types}\label{subsecBasic}
For the definition of the sum of plane segments see \cite[Sec. 1.1]{GMN}.

\begin{theorem}\label{basicprs}
 Let $\ty=\left(t_0;(g_1,\la_1,t_1);\dots;(g_r,\la_r,t_r)\right)$ be an SF-type of order $r\ge1$, and let $f,h\in\oo[x]$, $f\ne0$.\medskip

{\bf (A)} \ $f\sim_{v_r}h$ if and only if $S_r(f)=S_r(h)$ and $R_r(f)=R_r(h)$.\medskip

{\bf (B)} \ Suppose $v_r(f)=v_r(h)$, and let $H=f+h$. Then,
$$
y^{\nu_r(f)}R_r(f)+y^{\nu_r(h)}R_r(h)=
\begin{cases}
0,& \mbox{if }v_r(H)>v_r(f),\\
y^{\nu_r(H)}R_r(H),& \mbox{if }v_r(H)=v_r(f).
\end{cases}
$$

{\bf (C)} \ If $f$ is $\ty$-robust, then $v_r(fh)=v_r(f)+v_r(h)$ and
$$S_r(fh)=S_r(f)+S_r(h),\quad R_r(fh)=R_r(f)R_r(h).
$$

{\bf (D)} \ Let $g\in\oo[x]$ be a representative of $\ty$, and let $f=\sum_{0\le s}b_sg^s$ be the canonical $g$-expansion of $f$.
\begin{enumerate}
\item[(i)] $f\sim_{v_r}gh\ \imp\ R_r(f)\in t_rA_r[y]\ \imp\ \deg(f)\ge \deg(g)$.
\item[(ii)] $v_r(f)=\mn\{v_r(b_sg^s)\mid 0\le s\}$.
\end{enumerate}
\end{theorem}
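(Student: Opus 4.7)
My plan is to prove the four parts together by induction on the order $r$, treating the level-$0$ analogues (Lemmas \ref{v0equiv} and \ref{v0good}) as base cases. The technical device I would use throughout is Lemma \ref{gcd}: since our types are built from strongly unitary data and the polynomials manipulated are $\ty$-robust, every residual coefficient that appears on $N_r^-(f)$ is a unit in $A_r$, which is what rescues the usual integral-domain arguments in the artinian setting.

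For (A), I would expand $f=\sum a_s g_r^s$ and $h=\sum a'_s g_r^s$, so $f-h$ has $g_r$-expansion $\sum (a_s - a'_s)g_r^s$. By (\ref{posteriori}), $v_r(f-h)>v_r(f)$ amounts to saying that, for every abscissa $s$ on the $\la_r$-segment of $N_r^-(f)$, the point $(s,v_{r-1}(a_s-a'_s))$ lies strictly above the supporting line $L$ of $S_r(f)$. Applying the inductive (A) at level $r-1$ to each $a_s-a'_s$ translates this strict inequality into equality of the corresponding $(r-1)$-residuals, which is exactly $S_r(f)=S_r(h)$ and $R_r(f)=R_r(h)$. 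For (B), the assumption $v_r(f)=v_r(h)$ forces the contributing coefficients of $f$ and $h$ to lie on a common supporting line, and the inductive (B) at level $r-1$ applied coefficient-wise, together with the $y$-shifts $\nu_r$ realigning the left endpoints $s_r(f)$, $s_r(h)$, $s_r(H)$, yields the identity.

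For (C), $\ty$-robustness of $f$ implies via Definition \ref{defrobust} and Lemma \ref{gcd} that every residual coefficient of $f$ on $N_r^-(f)$ is a unit in $A_r$. Convoluting the $g_r$-expansions of $f$ and $h$, the extreme vertices of the Minkowski sum $N_r^-(f)+N_r^-(h)$ then arise uniquely from pairs of extreme vertices of the summands, and the inductive (C) at level $r-1$ applied to the robust coefficients $a_s$ gives multiplicativity of the $(r-1)$-residuals. This yields $v_r(fh)=v_r(f)+v_r(h)$, $S_r(fh)=S_r(f)+S_r(h)$, and $R_r(fh)=R_r(f)R_r(h)$.

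For (D)(i), $f\sim_{v_r} gh$ gives $R_r(f)=R_r(gh)$ by (A), and since $g$ is a $\ty$-robust representative with $R_r(g)=t_r$, part (C) yields $R_r(gh)=t_rR_r(h)\in t_rA_r[y]$. The second implication follows from $\deg R_r(f)\ge\deg t_r=f_r$ combined with $\deg R_r(f)=(s'_0-s_0)/e_r\le\lfloor\deg f/m_r\rfloor/e_r$, giving $\deg f\ge e_rf_rm_r=m_{r+1}=\deg g$. I expect (D)(ii) to be the main obstacle: writing $f=\sum b_sg^s$ with $\deg b_s<m_{r+1}$, part (C) applied to the robust factor $g^s$ gives $v_r(b_sg^s)=v_r(b_s)+sV_{r+1}$ and $R_r(b_sg^s)=R_r(b_s)t_r^s$. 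If several indices $s$ attained the minimum of $v_r(b_sg^s)$, iterating (B) would force $\sum_{s\in S} y^{\nu_r(b_sg^s)}R_r(b_s)t_r^s=0$ in $A_r[y]$ for the minimizing set $S$. The structural obstruction to such cancellation is the bound $\deg R_r(b_s)<f_r$ (from $\deg b_s<m_{r+1}$) combined with the degree $sf_r$ of the factor $t_r^s$; a direct computation of $\nu_r(b_sg^s)$ from $s_r(b_sg^s)=s_r(b_s)$ and $u_r(b_sg^s)=u_r(b_s)+se_rf_r(V_r+\la_r)$ should show that after the $y$-shifts the $y$-supports of the summands are pairwise disjoint, ruling out cancellation and giving $v_r(f)=\min v_r(b_sg^s)$.
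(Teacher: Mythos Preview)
Your inductive structure and the treatment of (A), (B), (D)(i) are essentially the paper's. There are two substantive issues.

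\textbf{Part (C): a real gap.} When you ``convolute the $g_r$-expansions'' you obtain $fh=\sum_k c_k g_r^k$ with $c_k=\sum_{i+j=k}a_ia'_j$, but this is \emph{not} the canonical $g_r$-expansion: $\deg c_k$ may reach $2m_r-2$, so one must write $c_k=b_{k,0}+b_{k,1}g_r$ and track both pieces. The entire difficulty is to show that the carry terms $b_{k,1}g_r^{k+1}$ land strictly above the supporting line, and that $v_{r-1}(b_{k,0})$ equals (not just $\ge$) $v_{r-1}(c_k)$ at the extreme abscissas. Both facts require (D$_{r-1}$), not merely (C$_{r-1}$): for instance, to rule out $v_{r-1}(b_{0,0})>v_{r-1}(a_{s_0}a'_{\ell_0})$ the paper observes this would force $a_{s_0}a'_{\ell_0}\sim_{v_{r-1}}b_{0,1}g_r$, whence by (D$_{r-1}$)(i) $R_{r-1}(a_{s_0})R_{r-1}(a'_{\ell_0})\in t_{r-1}A_{r-1}[y]$, contradicting robustness of $f$ together with $\deg R_{r-1}(a'_{\ell_0})<f_{r-1}$. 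Your sketch invokes only inductive (C), and so does not close this step.

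\textbf{Part (D)(ii): not the obstacle you expect.} The paper's argument here is two lines: write $f=b_0+gq$; since $\deg b_0<\deg g$, (D)(i) forbids $b_0\sim_{v_r}-gq$, hence $v_r(f)=\min\{v_r(b_0),v_r(gq)\}$; apply (C) to split $v_r(gq)=v_r(g)+v_r(q)$ and recurse on $q$. Your proposed route via disjoint $y$-supports of the shifted polynomials $y^{\nu_r(b_sg^s)}R_r(b_s)t_r^s$ may be salvageable, but it is considerably heavier and the disjointness claim is not obvious (the exponent $\nu_r(b_sg^s)$ depends on $s_r(b_s)$ and $u_r(b_s)$, which vary with $s$ in a way that does not immediately separate the supports). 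Once you have (D)(i), use it.
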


\begin{proof}
We prove the four statements simultaneously by induction on the order $r$ of $\ty$. The analogous statements for types of order zero follow immediately from the definitions (cf. Lemmas \ref{v0equiv} and \ref{v0good}).

For each statement (X) of the theorem, we assume that the analogous statement (X$_{r-1}$) for the truncated type $\ty_{r-1}=\op{Trunc}_{r-1}( \ty)$ is true.

For any $q\in\oo[x]$ and any $s\in\Z_{\ge0}$, let $(s,u_s(q))$ be the point attached to the $s$-th term of the canonical $g_r$-expansion of $q$ and let $c_s(q)\in A_r$ be the corresponding residual coefficient determined by $N_r(q)$.

We denote $g=g_r$, $u_s=u_s(f)$, $u'_s=u_s(h)$ for simplicity. Let $$f=\sum_{0\le s}a_sg^s, \quad h=\sum_{0\le s}a'_sg^s, \quad f-h=\sum_{0\le s}(a_s-a'_s)g^s$$ be the canonical $g$-expansions of $f$, $h$ and $f-h$.

Since $g$ is $\ty_{r-1}$-robust, (C$_{r-1}$) shows that
\begin{equation}\label{robr-1}
u_s=v_{r-1}\left(a_sg^s\right)=v_{r-1}(a_s)+sv_{r-1}(g)=v_{r-1}(a_s)+sV_r.
\end{equation}

By (\ref{posteriori}) and the definition of $v_r$, we get
\begin{equation}\label{defs}
v_r(f)\le v_r(a_sg^s)=e_r(u_s+s\la_r),\ \,\forall s\ge0,
\end{equation}
and equality holds if and only if $(s,u_s)\in S_r(f)$. \medskip

Let us prove (A).
Suppose $f\sim_{v_r}h$; that is, $v_r(f-h)>v_r(f)$. Take a point $(s,u_s)\in S_r(f)$ for which equality holds in (\ref{defs}). By (\ref{defs}) applied to $h$ and $f-h$, we get $v_r(h)\le v_r(a'_sg^s)=e_r(u'_s+s\la_r)$, and
\begin{equation}\label{hyp}
v_r\left(a_sg^s\right)=v_r(f)<v_r(f-h)\le v_r\left((a_s-a'_s)g^s\right).
\end{equation}
This proves $v_r\left(a_sg^s\right)=v_r\left(a'_sg^s\right)$, leading to $u_s=u'_s$. Thus, $S_r(f)\subset S_r(h)$ and the symmetry of the argument implies  $S_r(f)=S_r(h)$.

Also, (\ref{robr-1}), (\ref{defs}) and (\ref{hyp}) imply $a_s\sim_{v_{r-1}}a'_s$. By (A$_{r-1}$), we deduce that $S_{r-1}(a_s)=S_{r-1}(a'_s)$ (if $r>1$) and $R_{r-1}(a_s)=R_{r-1}(a'_s)$. This implies in any case  $c_s(f)=c_s(h)$. Since this works for any abscissa $s$ for which $(s,u_s)$ lies on $S_r(f)=S_r(h)$, the polynomials $R_r(f)$ and $R_r(h)$ coincide.

The reciprocal implication follows from (B). In fact, $S_r(f)=S_r(h)$ implies $v_r(f)=v_r(h)$ and $\nu_r(f)=\nu_r(h)=\nu_r(-h)$. If moreover $R_r(f)=R_r(h)$, then  $R_r(f)+R_r(-h)=0$, and this implies $v_r(f-h)>v_r(f)$ by (B). Therefore, the proof of (B) will complete the proof of (A) as well.\medskip

Let us prove (B). If $v_r(f+h)>v_r(f)$, then $f\sim_{v_r}-h$ and we have seen above that this implies $\nu_r(f)=\nu_r(-h)=\nu_r(h)$ and $R_r(f)=R_r(-h)=-R_r(h)$. Hence, $R_r(f)+R_r(h)=0$, as predicted by (B).

Suppose  $v_r(f)=v_r(h)=v_r(f+h)$, and let $L$ be the line of slope $-\la_r$ cutting the vertical axis at the point $(0,v_r(f)/e_r)$. As Figure \ref{figRr} shows, this line $L$ contains the segments $S_r(f)$, $S_r(h)$ and $S_r(f+h)$.

Suppose $s_0:=s_r(f)\le s_r(h)$. We then have $s_0\le s_r(f+h)$ too. The points in $L\cap \Z^2$ have abscissa $s_j:=s_0+je_r$, for some $j\in\Z$. Let $q=f,h$, or $f+h$. The left endpoint of $S_r(q)$ lies on $L$, so that
$$s_r(q)=s_0+m_qe_r,\quad u_r(q)=u_r(f)-m_qh_r,
$$for some non-negative integer $m_q$. The formula (\ref{nu}) for the function $\nu_r$ yields $\nu_r(q)=\nu_r(f)+m_q$.

Consider the following residual coefficients:
$$
\tilde{c}_s(q)=
\begin{cases}
c_s(q),&\mbox{ if $(s,u_s(q))$ lies on }L,\\
0,&\mbox{ if $(s,u_s(q))$ lies above }L.
\end{cases}
$$
These residual coefficients satisfy:
$$
y^{\nu_r(q)}R_r(q)=y^{\nu_r(f)}y^{m_q}R_r(q)=y^{\nu_r(f)}\sum\nolimits_{0\le j}\tilde{c}_{s_j}(q)y^j.
$$
Hence, the identity predicted by (B) in this case amounts to:
\begin{equation}\label{tilde}
\tilde{c}_s(f)+\tilde{c}_s(h)=\tilde{c}_s(f+h),\quad \forall\,s\ge0.
\end{equation}

Suppose $\tilde{c}_s(f)=0$, so that $(s,u_s)$ lies above $L$. If $(s,u'_s)$ lies above $L$, then  $(s,u_s(f+h))$ lies above $L$ too and $\tilde{c}_s(h)=\tilde{c}_s(f+h)=0$.
If $(s,u'_s)$ lies on $L$, then $a_s+a'_s\sim_{v_{r-1}}a'_s$ and $\tilde{c}_s(h)=c_s(h)=c_s(f+h)=\tilde{c}_s(f+h)$, by (A$_{r-1}$).  The same arguments
prove (\ref{tilde}) when $\tilde{c}_s(h)=0$.

Suppose $\tilde{c}_s(f)\ne0$ and $\tilde{c}_s(h)\ne0$, so that $(s,u_s)=(s,u'_s)$ lies on $L$. By (\ref{robr-1}), we have $v_{r-1}(a_s)=v_{r-1}(a'_s)$.
By (B$_{r-1}$),
\begin{align*}
y^{\nu_{r-1}(a_s)}R_{r-1}(a_s)&+y^{\nu_{r-1}(a'_s)}R_{r-1}(a'_s)\\=&
\begin{cases}
0,& \mbox{if }v_{r-1}(a_s+a'_s)>v_{r-1}(a_s),\\
y^{\nu_{r-1}(a_s+a'_s)}R_r(a_s+a'_s),& \mbox{if }v_{r-1}(a_s+a'_s)=v_{r-1}(a_s).
\end{cases}
\end{align*}
Since $z_0^{\nu_0(a)}=1$ for all $a\in\oo[x]$, and $z_{r-1}$ is a unit in $A_r$ for $r>1$,  we may replace $y=z_{r-1}$ to derive the identity (\ref{tilde}). This ends the proof of (B).\medskip

Let us prove (C). Assume that $f$ is $\ty$-robust (Definition \ref{defrobust}).

Let $s_0=s_r(f)$,  $\ell_0=s_r(h)$ be the abscissas of the left endpoints of $S_r(f)$, $S_r(h)$, respectively. Let $d=\deg R_r(f)$, $d'=\deg R_r(h)$, and denote
$$
s_j=s_0+je_r,\ 0\le j\le d;\qquad \ell_i=\ell_0+ie_r,\ 0\le i\le d'.
$$
For an abscissa $s\not\in\{s_j\mid 0\le j\le d\}$ the point $(s,u_s)$ does not lie on $S_r(f)$ and (\ref{defs}) shows that $v_r(a_sg^s)>v_r(f)$. Analogously,  $v_r(a'_sg^s)>v_r(h)$ for the abscissas  $s\not\in\{\ell_i\mid 0\le i\le d'\}$.
Therefore, the polynomials $f_0=\sum_{j=0}^da_{s_j}g^{s_j}$,  $h_0=\sum_{i=0}^{d'}a'_{\ell_i}g^{\ell_i}$ satisfy:
\begin{equation}\label{f0h0}
f\sim_{v_r} f_0,\quad h\sim_{v_r} h_0,\quad v_r(fh-f_0h_0)>v_r(f)+v_r(h).
\end{equation}

\noindent{\bf Claim 1. }$v_r(f_0h_0)=v_r(f)+v_r(h)$.

Denote $\sg_k=s_0+\ell_0+ke_r,\ 0\le k\le d+d'$. Consider the product
$$
f_0h_0=\sum\nolimits_{0\le k\le d+d'}b_kg^{\sg_k},\qquad b_k=\sum\nolimits_{i+j=k}a_{s_j}a'_{\ell_i},
$$
and let the canonical $g$-expansion of each $b_k$ be:
$$
b_k=b_{k,0}+b_{k,1}g,\quad  0\le k\le d+d'.
$$
The lowest degree term of the canonical $g$-expansion of $f_0h_0$ is $b_{0,0}\,g^{s_0+\ell_0}$. By (\ref{f0h0}), $v_r(f_0h_0)\ge v_r(f)+v_r(h)$. By (\ref{posteriori}), equality holds if we show that
\begin{equation}\label{claim}
v_r\left(b_{0,0}g^{s_0+\ell_0}\right)=v_r(f)+v_r(h).
\end{equation}

By (\ref{robr-1}) and (\ref{defs}), this amounts to
\begin{equation}\label{toknow}
v_{r-1}(b_{0,0})=v_{r-1}(a_{s_0})+v_{r-1}(a'_{\ell_0}),
\end{equation}
which is equal to $v_{r-1}(a_{s_0}a'_{\ell_0})$ by (C$_{r-1}$), because $a_{s_0}$ is $\ty_{r-1}$-robust.
Since
$$a_{s_0}a'_{\ell_0}=b_0=b_{0,0}+b_{0,1}g$$ (D$_{r-1}$) shows that $v_{r-1}(b_{0,0})\ge v_{r-1}(a_{s_0}a'_{\ell_0})$. This inequality cannot be strict because  $a_{s_0}a'_{\ell_0}\sim_{v_{r-1}}b_{0,1}g$ implies, by (D$_{r-1}$):
$$
R_{r-1}(a_{s_0})R_{r-1}(a'_{\ell_0})=R_{r-1}(a_{s_0}a'_{\ell_0})\in t_{r-1}A_{r-1}[y].
$$
Since $f$ is $\ty$-robust, $\gcd_{A_{r-1}}(R_{r-1}(a_{s_0}),t_{r-1})=1$. Hence, $R_{r-1}(a'_{\ell_0})$ belongs to $t_{r-1}A_{r-1}[y]$, contradicting (D$_{r-1}$). This ends  the proof of Claim 1.\medskip

Claim 1 and (\ref{f0h0}), imply $fh\sim_{v_r} f_0h_0$ and $v_r(fh)=v_r(f)+v_r(h)$.

Therefore, by using (A), we may assume $f=f_0$, $h=h_0$ for the proof of the two last statements of (C) concerning the operators $S_r$ and $R_r$.

The segment $S_r(f)+S_r(h)$ has as left (resp. right) endpoint the vector sum of the two left (resp. right) endpoints of $S_r(f)$ and $S_r(h)$.

By (\ref{claim}) and (\ref{toknow}), $(s_0+\ell_0,u_{s_0}+u'_{\ell_0})$ is the left endpoint of $S_r(fh)$. Hence, in order to show that  $S_r(fh)=S_r(f)+S_r(h)$ we need only to check that $(s_d+\ell_{d'},u_{s_d}+u'_{\ell_{d'}})$ is the right endpoint of $S_r(fh)$.

For any term $ag^s$ of the canonical $g$-expansion of $fh$, (\ref{robr-1}) and (\ref{defs}) yield:
$$
v_{r-1}(a)\ge y_s:=(v_r(fh)/e_r)-s(V_r+\la_r),
$$
and equality holds if and only if $(s,v_{r-1}(ag^s))\in S_r(fh)$.

For $0 \le k\le d+d'$, the summand $b_k g^{\sg_k}$ contributes to the canonical $g$-expansion of $fh$ with two terms: $b_{k,0}g^{\sg_k}$ and  $b_{k,1}g^{\sg_k+1}$. By (D$_{r-1}$),
\begin{align*}
v_{r-1}(b_{k,1}g)\ge &\,v_{r-1}(b_k)\ge v_{r-1}(a_{s_j}a'_{\ell_i})=v_{r-1}(a_{s_j})+v_{r-1}(a'_{\ell_i})\\
\ge &\,(v_r(f)/e_r)-s_j(V_r+\la_r)+(v_r(h)/e_r)-\ell_i(V_r+\la_r) =y_{\sg_k},
\end{align*}
for some pair $(i,j)$ with $i+j=k$.
By (\ref{robr-1}), $v_{r-1}(b_{k,1})\ge y_{\sg_k}-V_r> y_{\sg_k+1}$. Thus, the contribution of the terms $b_{k,1}g^{\sg_k+1}$ is irrelevant in order to detect points $(s,v_{r-1}(ag^s))$ lying on $S_r(fh)$. More precisely, for any $s\in\Z_{\ge0}$,
\begin{equation}\label{bk0}
(s,v_{r-1}(ag^s))\in S_r(fh) \sii s=\sg_k \mbox{ and }v_{r-1}(b_{k,0})=y_s,
\end{equation}
for some $0\le k\le d+d'$.

Analogous arguments as those used in the proof of Claim 1 show that
$$v_{r-1}(b_{d+d',0})=v_{r-1}(a_{s_d}a'_{\ell_{d'}})=v_{r-1}(a_{s_d})+v_{r-1}(a'_{\ell_{d'}})=y_{\sg_{d+d'}}.
$$
Hence, (\ref{bk0}) shows that $(s_d+\ell_{d'},u_{s_d}+u'_{\ell_{d'}})$ is the right endpoint of $S_r(fh)$. This proves $S_r(fh)=S_r(f)+S_r(h)$ and, moreover:
\begin{equation}\label{ckfh}
c_{\sg_k}(fh)=c_{\sg_k}(b_{k,0}g^{\sg_k}),\quad 0\le k\le d+d'.
\end{equation}

\noindent{\bf Claim 2. }For each $0\le k\le d+d'$ we have
$$c_{\sg_k}(fh)=
\begin{cases}
0,&\mbox{if }v_{r-1}(b_k)>y_{\sg_k},\\
z_{r-1}^{\nu_{r-1}(b_k)}R_{r-1}(b_k)(z_{r-1}),&\mbox{if }v_{r-1}(b_k)=y_{\sg_k}.
\end{cases}
$$

By (D$_{r-1}$), $v_{r-1}(b_{k,0})\ge v_{r-1}(b_k)\ge y_{\sg_k}$.
If either $v_{r-1}(b_k)>y_{\sg_k}$ or $v_{r-1}(b_{k,0})>v_{r-1}(b_k)$, then  $c_{\sg_k}(b_{k,0}g^{\sg_k})=0$. In the latter case,
 $b_k\sim_{v_{r-1}}b_{k,1}g$ and (D$_{r-1}$) shows that $R_{r-1}(b_k)\in t_{r-1}A_{r-1}[y]$, so that $R_{r-1}(b_k)(z_{r-1})=0$. Thus, Claim 2 is correct in both cases, by (\ref{ckfh}).

Finally, suppose $v_{r-1}(b_k)=v_{r-1}(b_{k,0})=y_{\sg_k}$. By (B$_{r-1}$),
$$
y^{\nu_{r-1}(b_k)}R_{r-1}(b_k)-y^{\nu_{r-1}(b_{k,0})}R_{r-1}(b_{k,0})
$$
is either zero or equal to $y^{\nu_{r-1}(b_{k,1}g)}R_{r-1}(b_{k,1}g)$. In both cases, the polynomial vanishes when we replace $y=z_{r-1}$. Thus,
$$
c_{\sg_k}(b_{k,0}g^{\sg_k})=z_{r-1}^{\nu_{r-1}(b_{k,0})}R_{r-1}(b_{k,0})(z_{r-1})=z_{r-1}^{\nu_{r-1}(b_k)}R_{r-1}(b_k)(z_{r-1}).
$$
By (\ref{ckfh}), this ends the proof of Claim 2.\medskip

We are ready to prove $R_r(fh)=R_r(f)R_r(h)$, which amounts to
\begin{equation}\label{aimR}
\sum\nolimits_{i+j=k}c_{s_j}(f)c_{\ell_i}(h)=c_{\sg_k}(fh),\quad 0\le k\le d+d'.
\end{equation}

For any pair $(i,j)$ with $i+j=k$, we have
$v_{r-1}(a_{s_j}a'_{\ell_i})\ge y_{\sg_k}$, and equality holds if and only if $(s_j,u_{s_j})\in S_r(f)$ and
 $(\ell_i,u'_{\ell_i})\in S_r(h)$; or equivalently, $c_{s_j}(f)\ne0$ and $c_{\ell_i}(h)\ne0$. Hence, in the left-hand side of the equality (\ref{aimR}) we need only to consider pairs $(i,j)$ in the set:
$$I=\{(i,j)\mid i+j=k \mbox{ and }v_{r-1}(a_{s_j}a'_{\ell_i})=y_{\sg_k}\}.$$

If $I$ is the empty set, then both sides of (\ref{aimR}) vanish. The right-hand side vanishes by Claim 2, because $v_{r-1}(b_k)>y_{\sg_k}$.

Suppose $I\ne\emptyset$ and let $B_k=\sum_{(i,j)\in I}a_{s_j}a'_{\ell_i}$. Clearly,
$v_{r-1}(B_k)\ge y_{\sg_k}$ and
$$v_{r-1}(B_k)>y_{\sg_k} \sii v_{r-1}(b_k)>y_{\sg_k}.
$$
If $v_{r-1}(B_k)=v_{r-1}(b_k)=y_{\sg_k}$, then $B_k\sim_{v_{r-1}}b_k$, and by (C$_{r-1}$):
$$S_{r-1}(B_k)=S_{r-1}(b_k),\quad
R_{r-1}(B_k)=R_{r-1}(b_k).
$$
From the first equality we deduce $\nu_{r-1}(B_k)=\nu_{r-1}(b_k)$. Thus, by (B$_{r-1}$):
$$
\sum_{(i,j)\in I}y^{\nu_{r-1}(a_{s_j}a'_{\ell_i})}R_{r-1}(a_{s_j}a'_{\ell_i})=
\begin{cases}
0,& \mbox{if }v_{r-1}(b_k)>y_{\sg_k},\\
y^{\nu_{r-1}(b_k)}R_{r-1}(b_k),& \mbox{if }v_{r-1}(b_k)=y_{\sg_k}.
\end{cases}
$$

By taking $y=z_{r-1}$, we get a similar identity in $A_r$.
The right-hand side of this identity is equal to $c_{\sg_k}(fh)$ by Claim 2. Thus, we need only to show that the left-hand side is equal to $\sum_{(i,j)\in I}c_{s_j}(f)c_{\ell_i}(h)$.

Since $a_{s_j}$ is $\ty_{r-1}$-robust, (C$_{r-1}$) shows that
$$S_{r-1}(a_{s_j}a'_{\ell_i})=S_{r-1}(a_{s_j})+S_{r-1}(a'_{\ell_i}),\quad
R_{r-1}(a_{s_j}a'_{\ell_i})=R_{r-1}(a_{s_j})R_{r-1}(a'_{\ell_i}).
$$
Since the function $\nu_{r-1}(a)$ depends only on the left endpoint of $S_{r-1}(a)$, from the first equality we deduce $\nu_{r-1}(a_{s_j}a'_{\ell_i})=\nu_{r-1}(a_{s_j})+\nu_{r-1}(a'_{\ell_i})$. Hence,
$$
z_{r-1}^{\nu_{r-1}(a_{s_j}a'_{\ell_i})}R_{r-1}(a_{s_j}a'_{\ell_i})(z_{r-1})=c_{s_j}(f)c_{\ell_i}(h).
$$
This ends the proof of (C).\medskip

Let us prove (D). By (A) and (C), from $f\sim_{v_r}gh$ we deduce:
$$
R_r(f)=R_r(gh)=R_r(g)R_r(h)=t_rR_r(h)\in t_rA_r[y].
$$
This implies $f_r=\deg(t_r)\le \deg R_r(f)\le \deg(f)/m_re_r$, the last inequality by the definition of the operator $R_r$. This proves (i).

Write $f=b_0+gq$. By (i), $b_0\not \sim_{v_r}-gq$, so that
$$
v_r(f)=\mn\{v_r(b_0),v_r(gq)\}.
$$
By (C), $v_r(gq)=v_r(g)+v_r(q)$. A recursive argument proves (ii).
\end{proof}

\begin{corollary}\label{product}
Let $\ty$ be a type of order $r\ge 1$. Let $f,h\in\oo[x]$ with $f$ $\ty$-robust. Then, $N_r^-(fh)=N_r^-(f)+N_r^-(h)$.
\end{corollary}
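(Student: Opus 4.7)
The plan is to reduce the Minkowski-sum identity to single-slope identities and then invoke Theorem \ref{basicprs}(C) for each slope separately. A principal Newton polygon $N^-$ (a lower-convex piecewise linear path whose edges all have strictly negative slope, possibly degenerate to a single vertex) is uniquely determined by its family of $\lambda$-components $\{S_\lambda(N^-)\}_{\lambda\in\Q_{>0}}$: as $\lambda$ decreases from $+\infty$ to $0^+$, $S_\lambda(N^-)$ sweeps from the left endpoint of $N^-$ across each of its edges in turn to the right endpoint. Since taking $\lambda$-components commutes with the Minkowski sum of such polygons, it suffices to establish, for every $\lambda\in\Q_{>0}$, the single-slope identity
\[
S_\lambda(N_r^-(fh))=S_\lambda(N_r^-(f))+S_\lambda(N_r^-(h)).
\]

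Fix $\lambda\in\Q_{>0}$. The idea is to apply Theorem \ref{basicprs}(C) to an auxiliary SF-type whose top-level slope is exactly $\lambda$. By Definition \ref{defrobust}, the hypothesis that $f$ is $\ty$-robust depends only on the truncation $\ty_{r-1}:=\op{Trunc}_{r-1}(\ty)$ and the representative $g_r$; it does not involve $\lambda_r$ or $t_r$. Consequently, choosing any admissible $t'_r\in A_r[y]$ (for instance $t'_r=y-1$, which is monic, squarefree, strongly unitary, and satisfies $t'_r(0)\ne 0$), the datum
\[
\ty^{(\lambda)}:=\left(\ty_{r-1};(g_r,\lambda,t'_r)\right)
\]
is a well-formed SF-type of order $r$ with respect to which $f$ remains robust. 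Theorem \ref{basicprs}(C) applied to $\ty^{(\lambda)}$ then yields $S_\lambda(N_r^-(fh))=S_\lambda(N_r^-(f))+S_\lambda(N_r^-(h))$, which is exactly the desired single-slope identity.

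The main obstacle is conceptual rather than computational: one must verify that $\ty$-robustness is genuinely insensitive to the top-level data $(\lambda_r,t_r)$, which is a direct reading of Definition \ref{defrobust}, and that an admissible $t'_r$ always exists in $A_r[y]$; both checks are essentially formal. Once these are in place, quantifying the per-slope identity over all $\lambda\in\Q_{>0}$ recovers the Minkowski-sum equality $N_r^-(fh)=N_r^-(f)+N_r^-(h)$.
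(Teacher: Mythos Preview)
Your proof is correct and follows essentially the same approach as the paper: reduce to the per-slope identity $S_\lambda(N_r^-(fh))=S_\lambda(N_r^-(f))+S_\lambda(N_r^-(h))$ and obtain each instance from Theorem~\ref{basicprs}(C) applied to the type obtained by replacing $\lambda_r$ with $\lambda$ at the top level. The only cosmetic difference is that the paper quantifies only over the finitely many slopes actually occurring in $N_r^-(f)$ and $N_r^-(h)$, and does not bother to specify a new $t_r$ (since $S_r$ is independent of it), while you quantify over all $\lambda\in\Q_{>0}$ and explicitly pick $t'_r=y-1$; both variants work.
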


\begin{proof}
The polygon $N^-_r(f)+N^-_r(h)$ is uniquely determined by the property $$S_\la(N^-_r(f)+N^-_r(h))=S_\la(N^-_r(f))+S_\la(N^-_r(h)),$$ for $-\la$ running on the slopes of $N^-_r(f)$ and $N^-_r(h)$ \cite[Sec. 1.1]{GMN}. This is satisfied by $N_r^-(fh)$, by (C) of Theorem \ref{basicprs} applied to the type obtained by replacing $\la_r$ with $\la$ in the data of the last level.
\end{proof}

\begin{definition}\label{ordt}
Let $\ty$ be a type of order $r\ge0$. For any $f\in\oo[x]$ we define $\ord_\ty(f)=\ord_{t_r}(R_r(f))$ in $A_r[y]$ (see section \ref{subsecInductive}).

If $\ord_\ty(f)>0$ we say that $\ty$ \emph{divides} $f$, and we write $\ty\mid f$.
\end{definition}

\begin{corollary}\label{length}
Let $\ty$ be a type of order $r\ge1$ and let $\ty_{r-1}=\op{Trunc}_{r-1}(\ty)$.
\begin{enumerate}
\item For any $f\in\oo[x]$ we have $\ell(N_r^-(f))=\ord_{\ty_{r-1}}(f)$.
\item If $f\in\oo[x]$ is $\ty$-robust, then $$\ord_{\ty_{r-1}}(fh)=\ord_{\ty_{r-1}}(f)+\ord_{\ty_{r-1}}(h),\quad \forall h\in\oo[x].$$
\end{enumerate}
\end{corollary}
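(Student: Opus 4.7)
Statement (2) follows rapidly from (1): when $f$ is $\ty$-robust, Corollary \ref{product} gives $N_r^-(fh) = N_r^-(f) + N_r^-(h)$ as a Minkowski sum of plane polygons, and the abscissa of the rightmost vertex is additive under this sum, so $\ell(N_r^-(fh)) = \ell(N_r^-(f)) + \ell(N_r^-(h))$; applying (1) to each of $f$, $h$, $fh$ then gives the claim. The substantive work is therefore statement (1).

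For (1), write the canonical $g_r$-expansion $f = \sum_s a_s g_r^s$ and set $s_0 := \ell(N_r^-(f))$. By the definition of the principal polygon, $s_0$ is the smallest index with $v_{r-1}(a_{s_0} g_r^{s_0}) = v_{r-1}(f)$. The plan is to split $f = p + g_r^{s_0}q$, where $p = \sum_{s<s_0} a_s g_r^s$ and $q = \sum_{t\ge 0} a_{s_0+t} g_r^t$ are the canonical $g_r$-expansions of $p$ and $q$. Theorem \ref{basicprs}(D)(ii) applied at level $r-1$, with the representative $g_r$ of $\ty_{r-1}$, gives $v_{r-1}(p) > v_{r-1}(f) = v_{r-1}(g_r^{s_0}q)$, hence $f \sim_{v_{r-1}} g_r^{s_0}q$. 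Then Theorem \ref{basicprs}(A) gives $R_{r-1}(f) = R_{r-1}(g_r^{s_0}q)$, and iterated application of Theorem \ref{basicprs}(C) (peeling off one $g_r$ at a time, using that $g_r$ is $\ty_{r-1}$-robust with $R_{r-1}(g_r)=t_{r-1}$) yields $R_{r-1}(f) = t_{r-1}^{s_0} R_{r-1}(q)$. Since $t_{r-1}$ is unitary and hence a non-zero divisor in $A_{r-1}[y]$, it suffices to prove $R_{r-1}(q) \notin t_{r-1}A_{r-1}[y]$ in order to conclude $\ord_{t_{r-1}}(R_{r-1}(f)) = s_0$.

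For this final non-divisibility, observe that the $g_r$-expansion of $q$ has leading index $0$ with $v_{r-1}(a_{s_0}) = v_{r-1}(q)$. Write $q = a_{s_0} + g_r q'$. If $v_{r-1}(g_r q') > v_{r-1}(a_{s_0})$, then $q \sim_{v_{r-1}} a_{s_0}$ and $R_{r-1}(q) = R_{r-1}(a_{s_0})$. Otherwise the two valuations coincide, and Theorem \ref{basicprs}(B), combined with $R_{r-1}(g_r q') = t_{r-1} R_{r-1}(q')$ from (C), yields
$$y^{\nu_{r-1}(q)} R_{r-1}(q) \equiv y^{\nu_{r-1}(a_{s_0})} R_{r-1}(a_{s_0}) \pmod{t_{r-1}}.$$
The right-hand side is nonzero modulo $t_{r-1}$: $R_{r-1}(a_{s_0}) \ne 0$ has degree $<f_{r-1}=\deg t_{r-1}$ (since $\deg a_{s_0}<m_r$, as noted in the paper following the definition of $R_r$), while $t_{r-1}$ being strongly unitary makes $t_{r-1}(0)$ a unit, so $y$ is coprime to $t_{r-1}$ in $A_{r-1}[y]$. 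The main technical nuisance is accommodating the potentially negative integer exponents $\nu_{r-1}$ in (B); this is precisely what the coprimality of $y$ and $t_{r-1}$ is for, since after multiplying the congruence through by a sufficiently high power of $y$ one lands in $A_{r-1}[y]$, where coprimality preserves non-divisibility by $t_{r-1}$.
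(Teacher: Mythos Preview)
Your proof is correct and follows essentially the same route as the paper's. The only differences are cosmetic: the paper groups together just the terms $a_sg_r^s$ with $s\in I=\{s:v_{r-1}(a_sg_r^s)=v_{r-1}(f)\}$ (so its analogue of your $q$ automatically has $v_{r-1}(g_rq')=v_{r-1}(a_{s_0})$ when nonzero, avoiding your case split), and for the final non-divisibility it substitutes $y=z_{r-1}$ and works in $A_r$ rather than arguing coprimality of $y$ and $t_{r-1}$ in $A_{r-1}[y]$. One small caveat: your phrase ``$t_{r-1}$ being strongly unitary makes $t_{r-1}(0)$ a unit'' relies on $t_{r-1}(0)\ne 0$, which is part of the type definition only for $r-1\ge 1$; at $r=1$ it can fail (e.g.\ $t_0=y$), but there the level-$0$ analogue of (B) carries no $y^{\nu}$ factors, so the coprimality step is not needed and your argument goes through unchanged.
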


\begin{proof}
Denote $g=g_r$ and let $f=\sum_{0\le s}a_sg^s$ be the canonical $g$-expansion of $f$. Let $I=\{s\in\Z_{\ge0}\mid v_{r-1}(a_sg^s)=v_{r-1}(f)\}$, and take
$$s_0=\mn(I),\quad f_0=\sum\nolimits_{s\in I}a_sg^s=g^{s_0}h.
$$

By (D) of Theorem \ref{basicprs}, $I\ne\emptyset$, $s_0=\ell(N_r^-(f))$, and $f\sim_{v_{r-1}}f_0$.

Since $g$ is $\ty_{r-1}$-robust, (A) and (C) of Theorem \ref{basicprs} show that
$$
R_{r-1}(f)=R_{r-1}(f_0)=R_{r-1}(g)^{s_0}R_{r-1}(h)=t_{r-1}^{s_0}R_{r-1}(h).
$$
Hence, the corollary will be proven if we show that $t_{r-1}\nmid R_{r-1}(h)$.

To this end, write $h=a_{s_0}+gq$. By (D) and (C) of Theorem \ref{basicprs}:
$$v_{r-1}(h)=v_{r-1}(a_{s_0})=v_{r-1}(gq).
$$
Hence, (B) of Theorem \ref{basicprs} shows that:
\begin{equation}\label{B}
y^{\nu_{r-1}(a_{s_0})}R_{r-1}(a_{s_0})+
y^{\nu_{r-1}(gq)}R_{r-1}(gq)=y^{\nu_{r-1}(h)}R_{r-1}(h).
\end{equation}
Now, $R_{r-1}(gq)=R_{r-1}(g)R_{r-1}(q)=t_{r-1}R_{r-1}(q)$. Suppose that $t_{r-1}$ divides $R_{r-1}(h)$. Since $z_0^{\nu_0(a)}=1$ for all $a\in\oo[x]$, and $z_{r-1}$ is a unit in $A_r$ for $r>1$, we may replace $y=z_{r-1}$ in (\ref{B}), leading to
$R_{r-1}(a_{s_0})(z_{r-1})=0$. This is impossible, because $\deg R_{r-1}(a_{s_0})<f_{r-1}=\deg t_{r-1}$. This proves (1).

Item (2) follows immediately from (1) and Corollary \ref{product}.
\end{proof}

\section{The SF-OM algorithm}\label{secOM}
Given $N$ and a monic irreducible $f\in\Z[x]$, the SF-OM algorithm either finds a proper factor of $N$, or constructs an \emph{SF-OM representation of $f$ with respect to $N$}.

That is, a finite tree $\tcal(f)$ of SF-types with the following properties:

\begin{enumerate}
\item The root nodes are the types of order zero attached to the squarefree factors of $\rd_N(f)$ in $A_0[y]$.
\item The leaves $\ty_1,\dots,\ty_m$ satisfy $\ord_{\ty_i}(f)=1$ and $\rep(\ty_i)\cap\rep(\ty_j)=\emptyset$ for all $1\le i\ne j\le m$.
\end{enumerate}

Initially, we compute a squarefree decomposition of $\rd_N(f)$ in $A_0[y]$:
$$\rd_N(f)=T_1^{\ell_1}\cdots T_k^{\ell_k},\qquad 1\le \ell_1<\cdots<\ell_k.$$ For each squarefree factor $T$ the type of order zero $\ty_0=(T)$ is taken as one of the root nodes of the tree. Let $\ell:=\ord_{\ty_0}(f)=\ord_T(\rd_N(f)$.

If $\ell=1$, then $\ty_0$ is a leaf too.
If $\ell>1$, then $\ty_0$ sprouts several branches $\ty_{\la,t}=(T;(g,\la,t))$, where $g$ is a representative of $\ty_0$, $-\la$ runs on the slopes of $N_{v_0,g}^-(f)$ and $t$ runs on the squarefree factors of $R_{v_0,g,\la}(f)\in A_1[y]$, provided by the SFD algorithm.

All types $\ty$ obtained along the process branch in a similar way, as long as $\ord_\ty(f)>1$. Let us briefly review the relevant subroutines which are used.\medskip

\noindent{\tt SFD($A$,\,$\varphi$)}

\noindent Algorithm \ref{sfdA}  applied to $\varphi\in A[y]$, plus a test to check if the squarefree factors are strongly unitary. \medskip

\noindent{\tt Newton($\ty$,\,$\ell$,\,$f$)}

\noindent The type $\ty$ of order $i$ is equipped with a representative $g$. The routine computes the first $\ell+1$ coefficients $a_0,\dots,a_\ell$ of the canonical $g$-expansion $f=\sum_{0\le s}a_sg^s$, and the Newton polygon of the cloud of points $(s,v_i(a_sg^s))$ for $0\le s\le\ell$. Then, it tests if $\gcd_{A_i}(R_i(a_s),t_i)=1$, and certificates that the input polynomial $f$ is $\ty$-robust.\medskip


\noindent{\tt ResidualPolynomial($\ty$,\,$\la$,\,$f$)}

\noindent The type $\ty$ of order $i$ is equipped with a representative $g$. The routine computes the residual polynomial $R_{v_i,g,\la}(f)\in A_{i+1}[y]$.\medskip

\noindent{\tt Representative($\ty$)}

\noindent Computation of a representative of $\ty$ based on \cite[Prop. 3.4]{gen}.\medskip

We now describe the SF-OM algorithm in pseudocode.
We emphasize the type to which a certain level data belongs as a superindex: $g_i^\ty$, $\la_i^\ty$, $t_i^\ty$, etc.
\bigskip

\noindent{\bf SF-OM ALGORITHM}\vskip 1mm

\noindent INPUT:

$-$ A monic irreducible polynomial $f\in \Z[x]$ of degree $n>1$.

$-$ An integer $N>1$ whose prime factors $p$ satisfy $p>n$.

\medskip

\sst{1}Call {\tt SFD($A_0$,$\rd_N(f)$)}, with output $L=[(T_1,\ell_1),\dots,(T_k,\ell_k)]$

\sst{2}If $\ell_1=1$ THEN build the type $\ty=(T_1)$, set {\tt Leaves} $\leftarrow [\ty]$ and delete

\sst{}$(T_1,\ell_1)$ from the list $L$ \quad ELSE\quad
{\tt Leaves} $\leftarrow [\ ]$

\sst{3}FOR each $(T,\ell)$ in $L$ DO

\stst{4}Take a robust monic lifting $g\in \Z[x]$ of $T$ and create a type $\ty$ with
\vskip -.4mm \stst{}$t_0^\ty\leftarrow T, \quad \om_1^\ty\leftarrow\ell,\quad g_1^\ty\leftarrow g,\quad A_1^\ty\leftarrow A_0[y]/(T)$

\stst{}Initialize a stack {\tt BranchNodes} $\leftarrow[\ty]$

\stst{}{\bf WHILE $\#${\tt BranchNodes}\;$>0$  DO}

\ststst{5}Extract a type $\ty_0$ from {\tt BranchNodes}. Let $i-1$ be its order

\ststst{6}FOR every slope $-\la$ of {\tt Newton($\ty_0$,\,$\om_i^{\ty_0}$,\,$f$)} DO

\stststst{7}$\lambda_i^{\ty_0}\leftarrow$ $\la$, \ $R_i(f)\leftarrow$ {\tt ResidualPolynomial($\ty_0$,\,$\la$,\,$f$)}

\stststst{8}Call {\tt SFD($A_i^{\ty_0}$,\,$R_i(f)$)}, with output $L'=[(T'_1,\ell'_1),\dots,(T'_j,\ell'_j)]$

\stststst{9}FOR each $(T',\ell')$ in $L'$ DO

\ststststst{10}$\ty\leftarrow\ty_0\quad t_i^{\ty}\leftarrow T',\quad A_{i+1}^\ty\leftarrow A_i^\ty[y]/(T'),\quad \om_{i+1}^\ty\leftarrow\ell'$

\ststststst{11}IF $\ell'=1$ THEN add $\ty$ to {\tt Leaves} and go to step {\bf 5}

\ststststst{12}$g_{i+1}^\ty\leftarrow$ {\tt Representative($\ty$)} and add
 $\ty$ to {\tt BranchNodes}




\stst{}{\bf END WHILE}\medskip

\noindent OUTPUT:

If no proper factor of $N$ is detected along the process, the types in {\tt Leaves} are the leaves of an SF-OM representation of $f$ with respect to $N$. \bigskip

This description of the SF-OM algorithm omits implementation details which are not relevant for the purposes of this paper, like an acceleration of the routine based on the use of \emph{optimal SF-types}.

Also, we omitted all hooks.
All subroutines crash if a proper factor of $N$, or of some modulus $t_i\in A_i[y]$, is detected. In the former case, the routine outputs the factor of $N$ and ends. In the latter case, the algorithm modifies some accumulated data and continues.

More precisely, if a factorization $t_i=\varphi\psi$ in $A_i[y]$ is detected, then all types $\ty$ in the stack {\tt BranchNodes} whose truncation at the $i$-th level is
$$
\op{Trunc}_i(\ty)=\left(t_0;(g_1,\la_1,t_1);\dots;(g_i,\la_i,t_i)\right)
$$
are replaced with two types of order $i$:
$$
\ty'=\left(t_0;(g_1,\la_1,t_1);\dots;(g_i,\la_i,\varphi)\right),\quad
\ty''=\left(t_0;(g_1,\la_1,t_1);\dots;(g_i,\la_i,\psi)\right).
$$

\section{Tree of irreducible types associated with an SF-type}

We fix an SF-type $\ty=\left(t_0;(g_1,\la_1,t_1);\dots;(g_r,\la_r,t_r)\right)$.
Let $A$ be the inductive artinian algebra attached to $\ty$ in section \ref{subsecNtypes}.

\begin{definition}\label{unramified}
We say that $\ty$ is \emph{unramified} if $e_0=\cdots=e_r=1$.
We say that $\ty$ is \emph{irreducible} if $N$ is a prime and all $t_i\in A_i[y]$ are irreducible.
\end{definition}

Irreducible types coincide with the types introduced by Montes \cite{m,GMN}, except for a different normalization of the data and operators they support, which has been taken from \cite{gen}.

We fix a prime factor $p$ of $N$, and denote by $\mx_p(A)$ the set of maximal ideals containing $p$. For instance,  $\mx_p(A_0)=\{\m_0\}$, with $\m_0=pA_0$.


\subsection{Tree of maximal ideals of an inductive artinian algebra}\label{subsecTreeM}

Consider the formal disjoint union of all the sets $\mx_p(A_i)$:
$$
\mm_p=\coprod\nolimits_{i=0}^{r+1}\mx_p(A_i)
$$
Note that $\mx_p(A_i)$ and $\mx_p(A_{i+1})$ are disjoint subsets of  $\mm_p$ even if
$A_i=A_{i+1}$, which may occur for several indices $0\le i\le r$.

The set $\mm_p$ may be given the structure of a finite connected tree with root node $\m_0$, by defining the previous node of any $\n\in\mx_p(A_i)$, with $i>0$, as the prime ideal $\n\cap A_{i-1}$. The leaves are the elements in $\mpa$.

The path joining any $\m\in\mpa$ with the root node is:
$$
\m_0\mapsto\m_1\mapsto\cdots\mapsto\m_r\mapsto\m_{r+1}=\m,\qquad \m_i:=\m\cap A_i,\  0\le i\le r+1.
$$

The finite field $\F_\m=A/\m$ is an inductive artinian algebra with intermediate fields  $\F_{\m_i}=A_i/\m_i$, for $0\le i\le r+1$. The canonical mapping $\rdm\colon A\to \F_\m$
respects the inductive structures:
\begin{equation}\label{chainF}
\begin{array}{rcccccccl}
 A_0&\subset&A_1&\subset&\cdots&\subset&A_r&\subset&A\\
\downarrow\ &&\downarrow&&&&\downarrow&&\downarrow\rdm\\
\Z/p\Z=\F_{\m_0}\!\!&\subset &\ \F_{\m_1}&\subset&\cdots&\subset&\ \F_{\m_r}&\subset& \F_\m.
\end{array}
\end{equation}

The branches of any node $\n\in\mx_p(A_i)$ are parameterized by the irreducible factors of $\op{red}_\n(t_i)$ in $\F_\n[y]$. Choose  monic polynomials $\psi\in A_i[y]$ such that $\op{red}_\n(\psi)$ are these irreducible factors. Each pair $(\n,\psi)$ determines a maximal ideal of $A_i[y]$, and the class modulo $t_iA_i[y]$ of this ideal is a maximal ideal of $A_{i+1}=A_i[y]/(t_i)$ whose intersection with $A_i$ is $\n$.

\subsection{Admissible $\phi$-expansions with respect to irreducible types}\label{subsecAdm}
Consider an irreducible type of order $r\ge0$ over $(\Z_p,\ord_p)$:
$$\ty^\star=\left(\psi_0;(\phi_1,\la_1,\psi_1);\dots;(\phi_r,\la_r,\psi_r)\right).$$
The inductive Artin algebra $\F$ associated with $\ty^\star$ is a chain of finite fields:
$$
\Z/p\Z=\F_0\subset \F_1\subset\cdots\subset \F_r\subset \F_{r+1}=\F.
$$
All polynomials in $\Z_p[x]$ are $\ty^\star$-robust. Also, a representative of $\ty^\star$ is necessarily irreducible in $\Z_p[x]$ \cite[Thm. 2.11]{GMN}.

Denote by $N^\star_i$, $v_i^\star$, $R^\star_i$ the Newton polygon operators, valuations and residual polynomial operators associated with $\ty^\star$, respectively.

Denote $\phi=\phi_r$ and consider an arbitrary $\phi$-expansion of a non-zero polynomial $f\in\Z_p[x]$, not necessarily the canonical one:
\begin{equation}\label{phdev}
f=b_0+b_1\phi+\cdots+b_s\phi^s+ \cdots,\quad b_s\in\Z_p[x].
\end{equation}

Take $u'_s=v^\star_{r-1}(b_s\phi^s)$ for all $s\ge0$, and let $M$ be the Newton polygon obtained as the lower convex hull of the set of points $(s,u'_s)$ with $b_s\ne0$.

To any integer abscissa $0\le s$ we attach a residual coefficient as usual:
$$\as{1.2}
c'_s=\left\{\begin{array}{ll}
0,&\mbox{ if $(s,u'_s)$ lies above }M, \mbox{ or }u'_s=\infty,\\
(z^\star_{r-1})^{\nu^\star_{r-1}(b_s)}R^\star_{r-1}(b_s)(z^\star_{r-1}),&\mbox{ if $(s,u'_s)$ lies on }M.
\end{array}
\right.
$$
For the points $(s,u'_s)$ lying on $M$ we now can have $c'_s=0$, because $R^\star_{r-1}(b_s)$ could be divisible by $\psi_{r-1}$ in $\F_{r-1}[y]$.

Let $S$ be the $\la_r$-component of $M$ (Definition \ref{component}), with endpoints having abscissas $s_0\le s'_0=s_0+de_r$. We can define the residual polynomial
$$
R'_r(f)=c'_{s_0}+c'_{s_0+e_r}\,y+\cdots+c'_{s_0+de_r}\,y^{d}\in\F_r[y].
$$

\begin{definition}\label{adm}
We say that the $\phi$-expansion (\ref{phdev}) is \emph{admissible} if $c'_s\ne0$
for each abs\-cissa $s$ of a vertex of $M^-$.
\end{definition}

Admissible $\phi$-expansions yield the same principal Newton polygon and the same residual polynomials as the canonical $\phi$-expansion.

\begin{definition}\label{polygons}
Let $N\subset \R_{\ge0}\times\R$ be a Newton polygon, and let $i_0$ be the abscissa of the left endpoint of $N$.
For any  $i_0\le s\le\ell(N)$, let $(s,y_s(N))$ be the unique point on $N$ of abscissa $s$.
For $0\le s<i_0$ we take $y_s(N)=\infty$.

We say that $N$ lies on or above a Newton polygon $M$, and we write $N\ge M$, if $y_s(N)\ge y_s(M)$ for all $0\le s\le \mn\{\ell(N),\ell(M)\}$.
\end{definition}

\begin{lemma}{\cite[Lem. 1.12]{GMN}}\label{admissible}
For any $\phi$-expansion one has $(N^\star_r)^-(f)\ge M^-$.
If the $\phi$-expansion is admissible, then $(N^\star_r)^-(f)=M^-$ and $R^\star_r(f)=R'_r(f)$.
\end{lemma}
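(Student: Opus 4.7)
The plan is to relate the arbitrary expansion to the canonical one: writing $b_s=\sum_k b_{s,k}\phi^k$ with $\deg b_{s,k}<m_r$ (its canonical $\phi$-expansion) and collecting by total power, the canonical coefficients of $f$ are $a_t=\sum_{i+j=t}b_{i,j}$. All arguments will invoke Theorem \ref{basicprs} for the truncation $\op{Trunc}_{r-1}(\ty^\star)$ of order $r-1$, whose representative is $\phi=\phi_r$ and for which every polynomial in $\Z_p[x]$ is robust (because $\ty^\star$ is irreducible).

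For the inequality $(N^\star_r)^-(f)\ge M^-$, I would prove the dual comparison of support functions: for every $\lambda>0$,
\[
\min_t(u_t+\lambda t)\;\ge\;\min_s(u'_s+\lambda s).
\]
Part (D)(ii) of Theorem \ref{basicprs} applied to each $b_s$ gives $v^\star_{r-1}(b_{s,k})+kV_r\ge v^\star_{r-1}(b_s)$; the pseudo-valuation property gives $v^\star_{r-1}(a_t)\ge\min_{i+j=t}v^\star_{r-1}(b_{i,j})$. Combining these and writing $t=i+j$,
\[
u_t+\lambda t\;\ge\;\min_{i+j=t}\bigl(v^\star_{r-1}(b_{i,j})+tV_r+\lambda t\bigr)\;\ge\;\min_{i+j=t}\bigl(u'_i+\lambda(i+j)\bigr)\;\ge\;\min_i(u'_i+\lambda i),
\]
since $\lambda j\ge 0$. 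Because the height of a principal polygon at abscissa $s$ is the Legendre transform $\sup_{\lambda>0}\bigl(\min_{(\sigma,u)\in N}(u+\lambda\sigma)-\lambda s\bigr)$, this support-function domination yields the pointwise inequality $(N^\star_r)^-(f)\ge M^-$.

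For the equality under admissibility, I would induct on $D:=\max_s\deg b_s$. If $D<m_r$ the expansion already is canonical and we are done. Otherwise, let $s^\star$ be the largest index with $\deg b_{s^\star}\ge m_r$ and split $b_{s^\star}=c+\phi d$ with $\deg c<m_r$ and $\deg d<\deg b_{s^\star}$; replacing $(b_{s^\star},b_{s^\star+1})$ by $(c,\,b_{s^\star+1}+d)$ yields another $\phi$-expansion of $f$ of strictly smaller complexity. Using Theorem \ref{basicprs}(B) on $\op{Trunc}_{r-1}(\ty^\star)$ to recompute the two affected abscissas and their residual coefficients, a case analysis would confirm that $M^-$, $R'_r(f)$, and the admissibility hypothesis are all unchanged, after which the inductive hypothesis closes the argument.

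The hard part will be the bookkeeping in this reduction step: one must simultaneously follow the two perturbed abscissas and verify that no vertex of $M^-$ shifts or loses its nonzero residual coefficient. The crucial observation to isolate is that the $\phi_{r-1}$-residual content of $b_{s^\star}$ seen by $R^\star_{r-1}(b_{s^\star})\bmod\psi_{r-1}$ is already carried by the piece $b_{s^\star,0}$, and hence by $c$, so formula (B) applied to the two old summands $c\phi^{s^\star}$ and $d\phi^{s^\star+1}$ cannot produce cancellation at a vertex of $M^-$. Once this invariance is secured, the recursion terminates at the canonical expansion, where both stated equalities hold tautologically.
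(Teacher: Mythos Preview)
The paper does not prove this lemma; it simply cites \cite[Lem.~1.12]{GMN}. So there is no in-paper argument to compare against, and your proposal must stand on its own.

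Your first half (the inequality $(N^\star_r)^-(f)\ge M^-$) is fine. The support-function comparison is the right mechanism, and your chain of inequalities using (D)(ii) and the pseudo-valuation axiom is correct. The Legendre-transform reformulation at the end is a little glib about endpoints, but since $a_t=\sum_{i+j=t}b_{i,j}$ forces $\ord_\phi(f)\ge\min\{s:b_s\ne0\}$, the left-endpoint issue sorts itself out and the ordering of Definition~\ref{polygons} follows.

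For the equality under admissibility, your strategy is sound but two things need repair. First, the stated induction variable $D=\max_s\deg b_s$ does not strictly decrease under your step: if several $b_s$ share the maximal degree, replacing one of them leaves $D$ unchanged. A measure such as $\sum_s\lfloor\deg b_s/m_r\rfloor$ does strictly drop, so the fix is cheap. Second, and more seriously, you have not actually carried out the ``hard bookkeeping'' you flag. It can be done, and your crucial observation is the right lever: if $(s^\star,u'_{s^\star})$ is a vertex of $M^-$ then admissibility gives $\psi_{r-1}\nmid R^\star_{r-1}(b_{s^\star})$, which by (D)(i) forbids $b_{s^\star}\sim_{v^\star_{r-1}}\phi d$ and hence forces $v^\star_{r-1}(c)=v^\star_{r-1}(b_{s^\star})$; then (B) combined with $R^\star_{r-1}(\phi d)(z_{r-1})=0$ shows the vertex and its residual coefficient are preserved. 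At abscissa $s^\star+1$ one must separately check the cases ``on $M^-$'' (where $v^\star_{r-1}(d)+(s^\star+1)V_r\ge u'_{s^\star}>u'_{s^\star+1}$ gives $b_{s^\star+1}+d\sim b_{s^\star+1}$), ``strictly above $M^-$'' (where the same bound keeps the new point strictly above), and ``beyond $\ell(M^-)$'' (where the new height stays $\ge u'_{\ell(M^-)}$, so no negative-slope side is created). None of this is in your write-up; as it stands the second half is a plan, not a proof.
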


\subsection{Tree of irreducible $p$-types attached to an SF-type}\label{subsecTree}
Recall that $\ty_{r-1}=\op{Trunc}_{r-1}(\ty)$. From now on, we make the following\medskip

\noindent{\bf Assumption: }Either $\rho:=\ord_p(N)=1$ or $\ty_{r-1}$ is unramified.\medskip

The next result reveals, under certain conditions, some arithmetic information about the prime $p$ encoded by the SF-type $\ty$.

\begin{theorem}\label{comparison}
For each $\m\in \mpa$ there exists an irreducible type
$$\ty_\m=\left(\psi_{\m,0};(\phi_{\m,1},\la_{\m,1},\psi_{\m,1});\dots;(\phi_{\m,r},\la_{\m,r},\psi_{\m,r})\right)$$
over $(\Z_p,\ord_p)$,
uniquely determined by the following conditions, where all data and operators of $\ty_\m$ are marked with a subscript $\m$:
\medskip

{\bf (A)} \ Each $\phi_{\m,i}\in\Z_p[x]$ is a $p$-adic irreducible factor of $g_i$, and $\la_{\m,i}=\rho\la_i$, for $1\le i\le r$. In particular, $e_{\m,i}=e_i$ for $i<r$, and $e_{\m,r}=e_r/\gcd(\rho,e_r)$.\medskip

{\bf (B)} \ Denote $G_{\m,i}=g_i/\phi_{\m,i}\in\Z_p[x]$ for $1\le i\le r$. The following constants, which depend only on $\ty$ and $\m$, are non-zero:
$$
\xi_i=
z_{\m,i-1}^{\nu_{\m,i-1}(G_{\m,i})}R_{\m,i-1}(G_{\m,i})(z_{\m,i-1})\in\F_{\m,i}^*.
$$

Hence, starting with the initial values $\chi_0=\rdp(N/p^\rho)$, $\sigma_0=1$, we may consider non-zero constants in $\F_{\m,i}$ defined by the recurrent formulas:
$$
\chi_i=\left(\chi_{i-1}\right)^{\ell'_i-\ell_iV_i}\xi_{i}^{\ell_i},\quad
\sigma_i=\left(\chi_{i-1}\right)^{-(e_iV_i+h_i)}\xi_{i}^{e_i},\quad 1\le i\le r.
$$

{\bf (C)} \ There is a commutative diagram of vertical isomorphisms
$$
\begin{array}{ccccccccc}
\F_{\m_0}\ &\subset& \F_{\m_1}\ &\subset&\cdots &\subset &\F_{\m_r}&\subset&\F_{\m_{r+1}}=\F_\m \\
\ \,\|\iota_0 &&\ \downarrow\iota_1&&\cdots&&\ \downarrow\iota_r&&\downarrow\iota_{r+1}\quad\\
\F_{\m,0}&\subset& \F_{\m,1}\ &\subset&\cdots &\subset &\F_{\m,r}&\subset&\F_{\m,r+1}\quad\
\end{array}
$$
determined by $\iota_{i+1}\left(\rdm(z_i)\right)=\sigma_i z_{\m,i}$, \,for $0\le i\le r$.

We shall consider these isomorphisms as identities. Thus, the mapping $\rdm$ in (\ref{chainF}) may be considered as a homomorphism between the inductive Artin algebras of $\ty$ and $\ty_\m$.
Under these identifications, we have  $$\rdm(z_i)=\sigma_i z_{\m,i},\quad \psi_{\m,i}(y)=\sigma_i^{-\deg(\psi)} \rdm(\psi)(\sigma_iy),\quad 0\le i\le r,$$
where $\m_{i+1}=(\m_i,\psi)$ is the parameterization described in section \ref{subsecTreeM}.\medskip

{\bf (D)} \ The assignment $\n\mapsto\ty_\n$ yields a tree isomorphism between the subtree
$\mm_p^0\subset\mm_p$  obtained by deleting the root node $\m_0$, and the full finite subtree of $\tcal(\Z_p,\ord_p)$ having as leaves the types $\ty_\m$ for all $\m\in\mpa$.\medskip

{\bf (E)} \ The sets $\rep(\ty_\m)$, for $\m$ running on $\mpa$, are pairwise disjoint.
Moreover, $\sum\nolimits_{\m\in\mpa}m_{\m,r+1}=m_{r+1}/\gcd(\rho,e_r)$.\medskip

{\bf (F)} \ Let $f\in\oo[x]$ be a $\ty$-robust polynomial.

\begin{enumerate}
\item[(i)\;] If $r>0$, we have $N_{\m,r}^-(f)=E_\rho(N_r^-(f))$, where $E_\rho\colon \R^2\to \R^2$ is the affinity $E_\rho(x,y)=(x,\rho y)$.
\item[(ii)] $R_{\m,r}(f)(y)=\tau_r(f)\rdm\left(R_r(f)\right)(\sigma_ry^{\gcd(\rho,e_r)})$,
where $\tau_0(f)=\chi_0^{v_0(f)}$, and $\tau_r(f)=\left(\chi_{r-1}\right)^{u_r(f)-s_r(f)V_r}\xi_{r}^{s_r(f)}$ for $r>0$.
\end{enumerate}\medskip

{\bf (G)} \ For any $f\in\oo[x]$, $v_{\m,r}(f)\ge(\rho/\gcd(\rho,e_r))\, v_r(f)$.
If $f$ is $\ty$-ro\-bust, then equality holds.\medskip

{\bf (H)} \ Suppose that $\gcd(\rho,e_r)=1$. Every $g\in\rep(\ty)$ factors in $\Z_p[x]$ as a product of monic irreducible polynomials:
$$
g=\prod\nolimits_{\m\in\mpa} \phi_\m,\qquad\phi_\m\in\rep(\ty_\m),
$$
such that $\ty_\m\nmid \phi_\n$ for all $\m,\n\in\mpa$, $\m\ne \n$.
\end{theorem}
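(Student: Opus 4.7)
The plan is to induct on the order $r$ of $\ty$. In the base case $r=0$, the algebra $A_1 = A_0[y]/(t_0)$ has its maximal ideals above $p$ in bijection with the monic irreducible factors of $\rdp(t_0) \in \F_p[y]$; taking $\psi_{\m,0}$ to be the corresponding factor for each $\m \in \mpa$, all of (A)--(H) collapse to elementary statements about $\F_p[y]$, with $\chi_0 = \rdp(N/p^\rho)$ a unit in $\F_p$ and $\sigma_0 = 1$. The identity (F)(ii) at level $0$ is immediate from the definition of $R_0$.

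For the inductive step, fix $\m \in \mpa$ and let $\m' = \m \cap A_r \in \mx_p(A_r)$. Applying the induction hypothesis to $\ty_{r-1}$ (whose inductive algebra is exactly $A_r$) yields an irreducible $p$-type $\ty_{\m'}$ of order $r-1$ together with its full data. By the inductive (H), whose hypothesis $\gcd(\rho, e_{r-1}) = 1$ is guaranteed by our standing assumption (either $\rho=1$, or $\ty_{r-1}$ unramified forces $e_{r-1} = 1$), the representative $g_r$ factors $p$-adically as $g_r = \prod_{\m'} \phi_{\m'}$ with $\phi_{\m'} \in \rep(\ty_{\m'})$ and $\ty_{\m'} \nmid \phi_{\n'}$ for $\m' \ne \n'$. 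Set $\phi_{\m,r} := \phi_{\m'}$ and $\la_{\m,r} := \rho\la_r$. The non-vanishing of $\xi_r$ in (B) follows from $\ty_{\m'} \nmid G_{\m,r}$, which is immediate from the disjointness in (H). Finally define $\psi_{\m,r}$ by $\psi_{\m,r}(y) = \sigma_r^{-\deg\psi}\rdm(\psi)(\sigma_r y)$, where $\psi$ parametrizes $\m$ above $\m'$ as in section \ref{subsecTreeM}; this gives (A) together with the displayed formulas of (C) at level $r$.

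The technical core is (F), from which (C) is obtained by specializing to $f = t_r$ and tracking $\rdm(z_r) = \sigma_r z_{\m,r}$. For (F)(i), I would apply the inductive (F)(i) and (G) to each coefficient $a_s$ of the $g_r$-expansion of $f$: since $g_r$ is $\ty_{r-1}$-robust and $v_{r-1}(g_r) = V_r$, the ordinates of the attached points scale exactly by $\rho$, producing the affinity $E_\rho$. For (F)(ii), I would view the $g_r$-expansion $f = \sum a_s g_r^s$ as a $\phi_{\m,r}$-expansion via $g_r^s = \phi_{\m,r}^s G_{\m,r}^s$; this is an admissible $\phi_{\m,r}$-expansion in the sense of Definition \ref{adm} because $\ty$-robustness together with the inductive (D) prevents residual-coefficient cancellation at vertices, so Lemma \ref{admissible} applies. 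The factor $\tau_r(f) = \chi_{r-1}^{u_r(f) - s_r(f) V_r}\xi_r^{s_r(f)}$ emerges by tracking the contribution of $G_{\m,r}^{s_r(f)}$ together with the normalization constants of the inductive residual polynomial operator, and the substitution $y \mapsto \sigma_r y^{\gcd(\rho,e_r)}$ accounts for the different horizontal spacing of integer points on $N_r^-(f)$ versus $N_{\m,r}^-(f)$.

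Parts (D), (E), (G), (H) are then deductions. (D) records the functoriality of the construction, with injectivity coming from uniqueness of $p$-adic factors; (E) combines (H) with the degree identities $\sum_{\m'}\deg \phi_{\m'} = m_r$ (from inductive (E)) and $\sum_\psi \deg\psi = f_r = \deg t_r$, giving the predicted total $m_{r+1}/\gcd(\rho,e_r)$; (G) reads off the left endpoint in (F)(i); and (H) splits each $\phi_{\m'}$ further according to its branches in $\ty$ at level $r$, using inductive (H) at order $r-1$ and then one additional level via (F). The main obstacle I anticipate is the meticulous bookkeeping of the constants $\chi_i, \sigma_i, \xi_i, \tau_i$ and their recurrences throughout the induction, since any miscalibration at one level propagates and breaks the compatibility statements of (C) and (F)(ii) at all subsequent levels; verifying the stated recurrences for $\chi_i$ and $\sigma_i$ amounts to matching $\ell_i, \ell'_i$, $e_i, h_i, V_i$ coefficients against the substitution $z_{\m,i} = \sigma_i^{-1}\rdm(z_i)$ inside the definition of $R_{\m,i}$.
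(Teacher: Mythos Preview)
Your plan matches the paper's proof almost exactly: induction on $r$, construction of $\ty_\m$ from the inductive (H$_{r-1}$)-factorization of $g_r$, and the key device of rewriting the $g_r$-expansion $f=\sum a_s g_r^s$ as the $\phi_{\m,r}$-expansion $\sum (a_s G_{\m,r}^s)\phi_{\m,r}^s$, shown admissible so that Lemma~\ref{admissible} yields (F). A few small slips to correct when you write it out:

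\begin{itemize}
\item Statement (C) is not obtained by ``specializing (F) to $f=t_r$'': $t_r\in A_r[y]$, not $\oo[x]$, so (F) does not apply to it. In the paper (C) is definitional once $\sigma_r\ne0$ is secured; the isomorphism $\iota_{r+1}$ is simply $\F_{\m_r}[y]/(\rdm(\psi))\simeq\F_{\m,r}[y]/(\psi_{\m,r})$ via $y\mapsto\sigma_r y$.
\item Admissibility of the $\phi_{\m,r}$-expansion is not a consequence of inductive (D); it comes from the inductive (F$_{r-1}$)(ii), which lets you compute $c'_s$ explicitly as $\xi_r^s z_{\m,r-1}^{\nu_{\m,r-1}(a_s)}\tau_{r-1}(a_s)\rdm(R_{r-1}(a_s)(z_{r-1}))$, and this is nonzero by robustness of $f$.
\item For (G) with $f$ \emph{not} robust you cannot read it off (F)(i). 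The paper argues $N_{\m,r}^-(f)\ge M^-\ge E_\rho(N_r^-(f))$ using Lemma~\ref{admissible} (first inequality) and (G$_{r-1}$) on the coefficients (second), then compares the ordinates at which the support lines of slope $-\rho\la_r$ meet the vertical axis.
\item For (H) it is the new representative $g\in\rep(\ty)$ that is factored, not the already-factored $g_r$; one applies (F)(ii) to $g$ to get $\ord_{\ty_\m}(g)=1$, then picks out the unique $p$-adic irreducible factor with this property and uses (E) to match degrees.
\end{itemize}
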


\begin{proof}
With the notation of section \ref{subsecTreeM}, let $\m=(\m_r,\psi)$, for some monic  $\psi\in A_r[y]$ such that $\rdm(\psi)$ is an irreducible factor of $\rdm(t_r)$ in $\F_{\m_r}[y]$.

We proceed by induction on the order $r$ of $\ty$. First, suppose $r=0$.

We take $\ty_\m=(\psi_{\m,0})$, for $\psi_{\m,0}=\rdm(\psi)$. Thus, $\F_{\m_0}=\Z/p\Z=\F_{\m,0}$ and
$$
\F_\m=A_1/\m\simeq \F_{\m_0}[y]/(\psi_{\m,0})=\F_{\m,1}.
$$
This isomorphism sends $\rdm(z_0)$ to $z_{\m,0}$.
Since $t_0$ is squarefree, the factors $\psi_{\m,0}$ are pairwise different. This proves (C) and (D).

Clearly, $v_{\m,0}=\ord_p\ge\rho\, v_0$. Equality holds for $\ty$-robust polynomials by Lemma \ref{v0good}. This proves  (G).

For a non-zero $f\in\oo[x]$, $R_{\m,0}(f)=\rdp\left(f/p^{v_{\m,0}(f)}\right)$ and $R_0(f)=\rdn\left(f/N^{v_0(f)}\right)$. Since $\rdp=\rdm\circ\rdn$, for a robust $f$ we get by (G):
\begin{equation}\label{RR}
R_{\m,0}(f)=\rdp\left(N/p^\rho\right)^{v_0(f)}\rdm\left(R_0(f)\right)=\tau_0(f)\rdm\left(R_0(f)\right).
\end{equation}
This proves (F).

Let $g\in\rep(\ty)$. Since $v_0(g)=0$, we have $\tau_0(g)=1$ and (\ref{RR}) shows that
$$
\rdp(g)=R_{\m,0}(g)=\rdm\left(R_0(g)\right)=\rdm(t_0)=\prod\nolimits_{\m\in\mx_p(A)}\psi_{\m,0}.
$$
By Hensels' lemma, $g=\prod\nolimits_{\m\in\mx_p(A)}\phi_{\m}$, for monic irreducible $\phi_\m\in\Z_p[x]$ with $\rdp(\phi_\m)=\psi_{\m,0}$. Each $\phi_\m$ is a representative of $\ty_\m$ and the sets $\rep\left(\ty_\m\right)$ are clearly pairwise disjoint. Also,
$$
\sum\nolimits_\m m_{\m,1}=\sum\nolimits_\m \deg \phi_\m=\deg g=m_1.
$$
This proves (E) and (H). The proof for types of order zero is complete.

We now assume $r>0$ and the existence of an irreducible $p$-type
$$
\ty_{\m_r}=\left(\psi_{\m,0};(\phi_{\m,1},\la_{\m,1},\psi_{\m,1});\dots;(\phi_{\m,r-1},\la_{\m,r-1},\psi_{\m,r-1})\right),
$$
such that for each statement (X) of the theorem, the analogous statement (X$_{r-1}$) concerning $\ty_{r-1}$ is true.


By condition (H$_{r-1}$), the $p$-adic factorization of $g_r$ takes the form:
$$
g_r=\prod\nolimits_{\n\in\mx_p(A_r)}\phi_\n.
$$
Take $\phi_{\m,r}=\phi_{\m_r}\in\rep(\ty_{\m_r})$.  Consider the irreducible $p$-type of order $r$:
$$
\ty_\m:=\left(\ty_{\m_r};(\phi_{\m,r},\la_{\m,r},\psi_{\m,r})\right),
$$
where $\la_{\m,r}\in\Q_{>0}$ and $\psi_{\m,r}\in\F_{\m,r}[y]$ monic irreducible, with $\psi_{\m,r}(0)\ne0$, are uniquely determined by (A), (B) and (C):
$$
\la_{\m,r}=\rho\la_r,\quad \psi_{\m,r}(y)=\sigma_r^{-\deg(\psi)}\rdm(\psi)(\sigma_r y).
$$
Since $t_r(0)$ is a unit in $A_r$, we have $\rdm(\psi)(0)\ne0$. Hence, we need only to prove (B); that is, $\xi_r\ne0$, to ensure that $\sigma_r\ne0$ and $\psi_{\m,r}$ is well defined.

By Theorem \ref{basicprs}, the operator $R_{\m,r-1}$ is multiplicative; hence,
\begin{equation}\label{delta}
\delta:=R_{\m,r-1}(g_r/\phi_{\m,r})(z_{\m,r-1})=\prod\nolimits_{\n\ne \m_r}R_{\m,r-1}(\phi_{\n})(z_{\m,r-1}).
\end{equation}
For every $\n\in\mx_p(A_r)$, $\n\ne\m_r$, we have $\ty_{\m_r}\nmid \phi_\n$ by (H$_{r-1}$). Hence, $\psi_{\m,r-1}\nmid R_{\m,r-1}(\phi_\n)$, and $R_{\m,r-1}(\phi_{\n})(z_{\m,r-1})\ne0$. Thus, the constant $\delta$ of (\ref{delta}) is nonzero, and this implies $\xi_r\ne0$. In fact, $\xi_1=\delta$ for $r=1$, whereas for $r>1$, $\xi_r$ is equal to $\delta$ times a power of the unit $z_{\m,r-1}$.

Therefore, our type $\ty_\m$ is well defined and satisfies (A), (B) and (C).

By construction, $\n$ is the previous node of $\m$ in $\mm_p^0$ if and only if $\ty_{\n}$ is the previous node of $\ty_\m$ in $\tcal$. This proves (D).\medskip

Let us prove (E). Since any $g\in\rep(\ty_\m)$ is irreducible in $\Z_p[x]$, \cite[Lem. 2.4]{GMN} shows that $R_{\m,i}(g)$ is a power of the irreducible polynomial $\psi_{\m,i}$ in $\F_{\m,i}[y]$, for all $0\le i\le r$. Now, for $\n\in\mpa$, $\n\ne\m$, let $\m_i=\m\cap A_i$ be the greatest common node in the paths joining $\m$ and $\n$ with $\m_0$. By the construction of $\ty_\m$ and $\ty_{\n}$, we have $\psi_{\m,i}\ne\psi_{\n,i}$, so that $g$ cannot be a representative of $\ty_{\n}$. Thus, the sets $\rep(\ty_\m)$ and $\rep(\ty_{\n})$ are disjoint.

The branches $\m=\n_1,\dots,\n_k$ of $\m_r$ are determined by monic liftings $\psi=\psi_1,\dots,\psi_k\in A_r[y]$ of the irreducible factors of the squarefree polynomial $\rdm(t_r)$ in $\F_{\m,r}[y]$. Thus,
\begin{equation}\label{branches}
\sum\nolimits_{j=1}^kf_{\n_j,r}=\deg t_r=f_r.
\end{equation}
Let $d=\gcd(\rho,e_r)$. By (A), $e_{\m,r}=e_r/d$ for all $\m\in\mpa$. By (\ref{branches}),
\begin{align*}
\sum\nolimits_{j=1}^km_{\n_j,r+1}=&\;
\sum\nolimits_{j=1}^ke_{\n_j,r}f_{\n_j,r}m_{\n_j,r}\\=&\;\sum\nolimits_{j=1}^k(e_r/d)f_{\n_j,r}m_{\m_r,r}=e_rf_rm_{\m_r,r}/d.
\end{align*}
By our assumptions, $\gcd(\rho,e_i)=1$ for all $i<r$. Thus, by (E$_{r-1}$):
$$
\sum\nolimits_{\m\in\mpa}m_{\m,r+1}=
\sum\nolimits_{\n\in\mx_p(A_r)}e_rf_rm_{\n,r}/d=e_rf_rm_r/d=m_{r+1}/d.
$$
This ends the proof of (E).\medskip

Let us prove (F). Assume that $f\in\oo[x]$ is $\ty$-robust. Denote $g=g_r$, $\phi=\phi_{\m,r}$ and $G=g/\phi$. The canonical $g$-expansion of $f$ induces in a natural way a $\phi$-expansion:
\begin{equation}\label{phiexp}
f=\sum\nolimits_{0\le s}a_sg^s=\sum\nolimits_{0\le s}b_s\phi^s,\quad b_s=a_sG^s.
\end{equation}

Let $u_s=v_{r-1}(a_sg^s)$, $u'_s=v_{\m,r-1}(b_s\phi^s)=v_{\m,r-1}(a_sg^s)$.
Let $M$ be the Newton polygon determined by the $\phi$-expansion (\ref{phiexp}); that is, $M$ is the lower convex hull of the set of points $\left\{(s,u'_s)\mid s\ge0\right\}$.

Since $a_s$ and $g$ are $\ty_{r-1}$-robust, (G$_{r-1}$) and Theorem \ref{basicprs},(C) show that
\begin{equation}\label{uu}
u'_s=v_{\m,r-1}(a_s)+s\,v_{\m,r-1}(g)= \rho\left(v_{r-1}(a_s)+s\,v_{r-1}(g)\right)=\rho\,u_s,
\end{equation}
for all $s\ge0$. Hence, $M=E_\rho(N_r(f))$.

We now proceed to compare $M^-$ with $N_{\m,r}^-(f)$ by using Lemma \ref{admissible}.

By item (C) of Theorem \ref{basicprs}, the functions $s_{\m,r-1}$ and $u_{\m,r-1}$ convert a product of polynomials into a sum of integers. Therefore, the function $\nu_{\m,r-1}=\ell'_{\m,r-1}s_{\m,r-1}-\ell_{\m,r-1}u_{\m,r-1}$
has the same property.

Hence, by using (F$_{r-1}$),(ii) and (C$_{r-1}$), for any abscissa $s$ such that $(s,u'_s)$ lies on $M$, the residual coefficient $c'_s$ may be expressed as:
\begin{equation}\label{cs}
\begin{array}{rl}
c'_s=&\!\!z_{\m,r-1}^{\nu_{\m,r-1}(b_s)}R_{\m,r-1}(b_s)(z_{\m,r-1})\\=&\!\!z_{\m,r-1}^{\nu_{\m,r-1}(a_s)+s\nu_{\m,r-1}(G)}R_{\m,r-1}(a_s)(z_{\m,r-1})R_{\m,r-1}(G)^s(z_{\m,r-1})\\=&\!\xi_r^s\,
z_{\m,r-1}^{\nu_{\m,r-1}(a_s)}R_{\m,r-1}(a_s)(z_{\m,r-1})\\
=&\!\!\xi_r^s\,
z_{\m,r-1}^{\nu_{\m,r-1}(a_s)}\tau_{r-1}(a_s)\rdm\left(R_{r-1}(a_s)\right)(\sigma_{r-1}z_{r-1}),\\
=&\!\!\xi_r^s\,
z_{\m,r-1}^{\nu_{\m,r-1}(a_s)}\tau_{r-1}(a_s)\rdm\left(R_{r-1}(a_s)(z_{r-1})\right)\ne0,
\end{array}
\end{equation}
because, being $f$ $\ty$-robust, $R_{r-1}(a_s)(z_{r-1})$ is a unit in $A_r$ by Lemma \ref{gcd}.

Therefore, the $\phi$-expansion (\ref{phiexp}) is admissible with respect to $\ty_\m$, and Lemma \ref{admissible} shows that $M^-=N_{\m,r}^-(f)$ and $R'_r(f)=R_{\m,r}(f)$.

The equality between Newton polygons implies $N_{\m,r}^-(f)=E_\rho(N_r^-(f))$.

For the proof of item (ii) we need the following fact.\medskip

\noindent{\bf Claim. } $\nu_{\m,r-1}(a_s)=\nu_{r-1}(a_s)$ for all $s\ge0$.
\medskip


By (F$_{r-1}$), $N_{\m,r-1}^-(a_s)=E_\rho(N_{r-1}^-(a_s))$.
The affinity $E_\rho$ maps the $\la_{r-1}$-component of $N_{r-1}^-(a_s)$ into the $\rho\la_{r-1}$-compo\-nent of $N_{\m,r-1}(a_s)$; thus,
\begin{equation}\label{su}
s_{\m,r-1}(a_s)=s_{r-1}(a_s),\quad u_{\m,r-1}=\rho\, u_{r-1}(a_s).
\end{equation}

By (A), $e_{\m,r-1}=e_{r-1}$. By our assumptions, either $e_{r-1}=1$ or $\rho=1$.

If $e_{r-1}=e_{\m,r-1}=1$, then $\ell_{\m,r-1}=\ell_{r-1}=0$ and $\ell'_{\m,r-1}=\ell'_{r-1}=1$.

If $\rho=1$, then $\la_{\m,r-1}=\la_{r-1}$, so that $h_{\m,r-1}=h_{r-1}$. Hence, $\ell_{\m,r-1}=\ell_{r-1}$ and $\ell'_{\m,r-1}=\ell'_{r-1}$.
The Claim follows in both cases from (\ref{su}).\medskip


We now compare $R_r(f)$ with $R'_r(f)=R_{\m,r}(f)$. Let $s_0=s_r(f)\le s'_0$ be the abscissas of the endpoints of the $\la_r$-component $S$ of $N_r^-(f)$. The affinity $E_\rho$ maps $S$ into the $\rho\la_r$-component of $M$; thus, the latter component has endpoints with the same abscissas $s_0\le s'_0$.

For any integer $s_0\le s\le s'_0$ we have $u'_s=\rho u_s$ by (\ref{uu}). Hence, the point  $(s,u_s)$ lies above $N_r^-(f)$ if and only if  the point  $(s,u'_s)$ lies above $M^-=E_\rho(N_r^-(f))$. In this situation, we have $c_s=0=c'_s$.

Let now $s=s_j=s_0+je_r$ be such that $(s,u_s)$ lies on $S$. The identity
\begin{equation}\label{end}
\xi_r^{s_j}\,\sigma_{r-1}^{-\nu_{r-1}(a_{s_j})}
\tau_{r-1}(a_{s_j})=\tau_r(f)\,\sigma_r^j,
\end{equation}
is straightforward to deduce from the definition of each constant. We need only to have in mind two identities:
$$
\begin{array}{l}
u_{r-1}(a_{s_j})+s_{r-1}(a_{s_j})\la_{r-1}=v_{r-1}(a_{s_j})/e_{r-1},\\
v_{r-1}(a_{s_j})+s_jV_r+jh_r=u_{s_j}+(je_r)\la_r=u_r(f),
\end{array}
$$
which follow from the definition of $v_{r-1}$ and the fact that $(s_j,u_{s_j})$ lies on $S$.

By (\ref{cs}), the Claim, (C$_{r-1}$) and (\ref{end}), we have:
$$
\begin{array}{rl}
c'_{s_j}=&\!\!\xi_r^{s_j}\,
z_{\m,r-1}^{\nu_{r-1}(a_{s_j})}\tau_{r-1}(a_{s_j})\rdm\left(R_{r-1}(a_{s_j})(z_{r-1})\right)\\
=&\!\!\xi_r^{s_j}\,\sigma_{r-1}^{-\nu_{r-1}(a_{s_j})}
\tau_{r-1}(a_{s_j})\rdm\left(c_{s_j}\right)\,=\,\tau_r(f)\sigma_r^j\rdm(c_{s_j}).
\end{array}
$$

Item (ii) follows from this identity. In fact, $c_{s_j}$ is the coefficient of $y^j$ in $R_r(f)$, whereas $c'_{s_j}$ is the coefficient of $y^{jd}$ in $R'_r(f)$, where $d=\gcd(\rho,e_r)$, because  $s_j=s_0+je_r=s_0+jde_{\m,r}$.
This ends the proof of (F).\medskip

Let us prove (G). By Lemma \ref{admissible}, $N_{\m,r}^-(f)\ge M^-$. Also, $u'_s\ge \rho\,u_s$ for all $s$, by (G$_{r-1}$). Thus, $M\ge E_\rho(N_r(f))$, leading to $N_{\m,r}^-(f)\ge E_\rho(N_r^-(f))$.

By definition, $v_{\m,r}(f)/e_{\m,r}$ is the ordinate where the line of slope $-\rho\la_r$ first touching $N_{\m,r}^-(f)$ from below cuts the vertical axis. Since $v_r(f)/e_r$ admits the same interpretation with respect to the polygon $N_r^-(f)$ and the similar line of slope $-\la_r$, we deduce that $v_{\m,r}(f)/e_{\m,r}\ge \rho\, v_r(f)/e_r$. By (A), this implies
$v_{\m,r}(f)\ge (\rho/\gcd(\rho,e_r))\, v_r(f)$.

Also, this argument shows that equality holds if $N_{\m,r}^-(f)=E_\rho(N_r(f))$, which follows from item (i) of (F) for a $\ty$-robust $f$.  \medskip

Finally, let us prove (H). Let $g$ be a representative of $\ty$, so that $R_r(g)=t_r$. Since $g$ is $\ty$-robust, item (ii) of (F) shows that
$$R_{\m,r}(g)(y)=\tau_r(g)\rdm(R_r(g))(\sigma_ry)=\tau_r(g)\rdm(t_r)(\sigma_ry).$$
Since $t_r$ is squarefree, the irreducible factor $\psi_{\m,r}$ of $\rdm(t_r)(\sigma_ry)$ divides $R_{\m,r}(g)$ only once; in other words, $\ord_{\ty_\m}(g)=1$.

Let $g=\phi_1\cdots \phi_k$ the factorization of $g$ into a product of monic irreducible polynomials in $\Z_p[x]$. Since $\ty_\m$ is irreducible, $$1=\ord_{\ty_\m}g=\ord_{\ty_\m}\phi_1+\cdots +\ord_{\ty_\m}\phi_k.$$ Hence, there exists an index $1\le i\le k$ such that $\ord_{\ty_\m}\phi_i=1$ and $\ord_{\ty_\m}\phi_j=0$ for all $j\ne i$. Let us denote $\phi_\m=\phi_i$.

An irreducible $\phi_\m\in\Z_p[x]$ with $\ord_{\ty_\m}\phi_\m=1$ satisfies $R_{\m,r}(\phi_\m)=\psi_{\m,r}$ and has degree $m_{\m,r+1}$ \cite[Lem. 2.4]{GMN}. Hence, $\phi_\m$ is a representative of $\ty_\m$.

By (E), these factors $\phi_\m$ are pairwise different, and: $$\deg g=m_{r+1}=\sum\nolimits_{\m\in\mpa}m_{\m,r+1}=\sum\nolimits_{\m\in\mpa}\deg\phi_\m.$$
Hence, $g=\prod_{\m\in\mpa}\phi_\m$.
\end{proof}





\begin{corollary}\label{suGr}
For $r>1$ and any $\m\in\mpa$ we have:
$$
s_{\m,r-1}(G_{\m,r})=0,\quad
u_{\m,r-1}(G_{\m,r})=\left(\rho V_r-V_{\m,r}\right)/e_{r-1}.
$$
\end{corollary}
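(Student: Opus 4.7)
The plan is to compute $N_{\m,r-1}^-(G_{\m,r})$ directly, by exploiting the factorization $g_r=\phi_{\m,r}\,G_{\m,r}$ coming from Theorem~\ref{comparison}(H) and the multiplicativity of principal Newton polygons on robust inputs (Corollary~\ref{product}).

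First I would verify that the hypotheses for this factorization are in place. The standing assumption (either $\rho=1$ or $\ty_{r-1}$ is unramified) guarantees $\gcd(\rho,e_{r-1})=1$, so Theorem~\ref{comparison}(H), applied at level $r-1$, produces the $p$-adic factorization $g_r=\prod_{\n\in\mx_p(A_r)}\phi_\n$, and hence $G_{\m,r}=\prod_{\n\ne\m_r}\phi_\n$. Since $\ty_{\m_r}$ is an irreducible $p$-type over $\Z_p$, every polynomial in $\Z_p[x]$ is $\ty_{\m_r}$-robust, so Corollary~\ref{product} applies to the factorization $g_r=\phi_{\m,r}\,G_{\m,r}$ and yields
\[
N_{\m,r-1}^-(g_r)=N_{\m,r-1}^-(\phi_{\m,r})+N_{\m,r-1}^-(G_{\m,r}).
\]

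The strategy is then to pin down the left side and one of the summands, and read off the unknown summand. For $N_{\m,r-1}^-(\phi_{\m,r})$, since $\phi_{\m,r}\in\rep(\ty_{\m_r})$, the polygon is one-sided of slope $-\la_{\m,r-1}$ with left endpoint $(0,V_{\m,r}/e_{\m,r-1})$; the identity $e_{\m,r-1}=e_{r-1}$ from Theorem~\ref{comparison}(A) rewrites this as $(0,V_{\m,r}/e_{r-1})$. For $N_{\m,r-1}^-(g_r)$, I would use that $g_r$ is $\ty_{r-1}$-robust and apply Theorem~\ref{comparison}(F) to $\ty_{r-1}$, obtaining $N_{\m,r-1}^-(g_r)=E_\rho\bigl(N_{r-1}^-(g_r)\bigr)$. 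Since $g_r\in\rep(\ty_{r-1})$, the polygon $N_{r-1}(g_r)$ is one-sided of slope $-\la_{r-1}$ with left endpoint at ordinate $e_{r-1}f_{r-1}(V_{r-1}+\la_{r-1})=V_r/e_{r-1}$, so $N_{\m,r-1}^-(g_r)$ is one-sided of slope $-\rho\la_{r-1}=-\la_{\m,r-1}$ with left endpoint $(0,\rho V_r/e_{r-1})$.

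Reading off the Minkowski decomposition, $N_{\m,r-1}^-(G_{\m,r})$ must then be one-sided of slope $-\la_{\m,r-1}$ with left endpoint $(0,(\rho V_r-V_{\m,r})/e_{r-1})$, which gives both $s_{\m,r-1}(G_{\m,r})=0$ and $u_{\m,r-1}(G_{\m,r})=(\rho V_r-V_{\m,r})/e_{r-1}$. The only friction point I foresee is notational bookkeeping: checking the identities $e_{\m,r-1}=e_{r-1}$ and $\la_{\m,r-1}=\rho\la_{r-1}$ so that slopes match across the decomposition, and translating the left-endpoint ordinate $e_{r-1}f_{r-1}(V_{r-1}+\la_{r-1})$ stated in the paragraph following Definition~\ref{repr} into the compact form $V_r/e_{r-1}$ via the recurrence $V_r=e_{r-1}f_{r-1}(e_{r-1}V_{r-1}+h_{r-1})$.
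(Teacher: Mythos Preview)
The proposal is correct and follows essentially the same approach as the paper: both compute $N_{\m,r-1}^-(g_r)$ via Theorem~\ref{comparison}(F) applied to $\ty_{r-1}$, identify $N_{\m,r-1}^-(\phi_{\m,r})$ from $\phi_{\m,r}\in\rep(\ty_{\m_r})$, and then subtract using the additivity of principal polygons (Corollary~\ref{product}) to read off the left endpoint of $N_{\m,r-1}^-(G_{\m,r})$. The paper phrases the last step as subtraction of the coordinates $s_{\m,r-1}$ and $u_{\m,r-1}$ directly (which is just the left-endpoint part of the Minkowski sum), while you spell out the full one-sided polygon, but there is no substantive difference.
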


\begin{proof}
As mentioned after Definition \ref{repr}, $N_{r-1}(g_r)$ is one-sided of slope $-\la_{r-1}$ and has left endpoint $(0,V_r/e_{r-1})$, whereas   $N_{\m,r-1}(\phi_{\m,r})$ is one-sided of slope $-\rho\la_{r-1}$ and has left endpoint $(0,V_{\m,r}/e_{r-1})$. Hence,
$$
\begin{array}{ll}
s_{r-1}(g_r)=0,& u_{r-1}(g_r)=V_r/e_{r-1},\\
s_{\m,r-1}(\phi_{\m,r})=0,& u_{\m,r-1}(\phi_{\m,r})=V_{\m,r}/e_{r-1}.
\end{array}
$$
On the other hand, (F) of Theorem \ref{comparison} shows that
$$s_{\m,r-1}(g_r)=s_{r-1}(g_r)=0,\quad u_{\m,r-1}(g_r)=\rho\, u_{r-1}(g_r)=\rho\, V_r/e_{r-1}.$$

By Corollary \ref{product}, $s_{\m,r-1}(G_{\m,r})=s_{\m,r-1}(g_r)-s_{\m,r-1}(\phi_{\m,r})=0$, and
$u_{\m,r-1}(G_{\m,r})=u_{\m,r-1}(g_r)-u_{\m,r-1}(\phi_{\m,r})=\left(\rho V_r-V_{\m,r}\right)/e_{r-1}$.
\end{proof}

\begin{corollary}\label{squarefree}
Let $f\in\oo[x]$ be a $\ty$-robust polynomial and let $\m\in\mpa$.

\begin{enumerate}
\item If $r\ge1$, then  $\ord_{\ty_{r-1}}(f)=\ord_{\ty_{\m,r-1}}(f)$.
\item If $\gcd(\rho,e_r)=1$, then $\ord_{\ty}(f)\le \ord_{\ty_\m}(f)$. Equality holds if moreover $R_r(f)=t_r^{\ord_\ty(f)}q$ in $A_r[y]$, with $\gcd_{A_r}(q,t_r)=1$.
\end{enumerate}
\end{corollary}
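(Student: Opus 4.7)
The plan is to deduce both statements from Corollary~\ref{length} together with parts (F)(i), (F)(ii) and (C) of Theorem~\ref{comparison}, using Lemma~\ref{gcd} at the very end.

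For item (1), since $\ty$ has order $r\ge1$, Corollary~\ref{length}(1) gives $\ord_{\ty_{r-1}}(f)=\ell(N_r^-(f))$, and the same corollary applied to the irreducible $p$-type $\ty_\m$ of order $r$ yields $\ord_{\ty_{\m,r-1}}(f)=\ell(N_{\m,r}^-(f))$. Now invoke Theorem~\ref{comparison}(F)(i): since $f$ is $\ty$-robust, $N_{\m,r}^-(f)=E_\rho(N_r^-(f))$. Because the affinity $E_\rho(x,y)=(x,\rho y)$ acts only on the ordinate, it preserves the abscissa of the right endpoint, so the two polygons have the same length, and the desired equality follows.

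For item (2), set $k=\ord_\ty(f)$ and use the assumption $\gcd(\rho,e_r)=1$ to rewrite Theorem~\ref{comparison}(F)(ii) as
$$R_{\m,r}(f)(y)=\tau_r(f)\,\rdm(R_r(f))(\sigma_r y),$$
with $\tau_r(f),\sigma_r\in\F_{\m,r}^*$. By the definition of $\ord_{t_r}$ there exists $q_0\in A_r[y]$ with $R_r(f)=t_r^kq_0$, so
$$R_{\m,r}(f)(y)=\tau_r(f)\,[\rdm(t_r)(\sigma_r y)]^k\,\rdm(q_0)(\sigma_r y).$$
By Theorem~\ref{comparison}(C), if $\m=(\m_r,\psi)$ then $\psi_{\m,r}(y)=\sigma_r^{-\deg\psi}\rdm(\psi)(\sigma_r y)$, so $\psi_{\m,r}$ is, up to a unit, an irreducible factor of $\rdm(t_r)(\sigma_r y)$; since $t_r$ is squarefree, it appears exactly once there. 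Hence $\psi_{\m,r}^k$ divides $R_{\m,r}(f)$, giving $\ord_{\ty_\m}(f)\ge k=\ord_\ty(f)$.

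For the equality clause, the hypothesis $R_r(f)=t_r^k q$ with $\gcd_{A_r}(q,t_r)=1$ lets us replace $q_0$ by $q$ in the formula above. By Lemma~\ref{gcd}(b) this implies $\gcd\bigl(\rdm(q),\rdm(t_r)\bigr)=1$ in $\F_{\m_r}[y]$; in particular $\rdm(\psi)\nmid\rdm(q)$, so $\psi_{\m,r}\nmid\rdm(q)(\sigma_r y)$. Combined with the fact that $\psi_{\m,r}$ occurs exactly once in $\rdm(t_r)(\sigma_r y)$, we conclude that $\psi_{\m,r}^k$ exactly divides $R_{\m,r}(f)$, yielding $\ord_{\ty_\m}(f)=k$. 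The main subtlety I expect to track carefully is the bookkeeping in the change of variable $y\mapsto\sigma_r y$ that identifies the irreducible factor of $\rdm(t_r)$ attached to $\m$ with $\psi_{\m,r}$; once that identification is in place, both statements reduce to routine facts about divisibility in $A_r[y]$ and $\F_{\m,r}[y]$.
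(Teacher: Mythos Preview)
Your proof is correct and follows essentially the same approach as the paper: item (1) via Corollary~\ref{length} and Theorem~\ref{comparison}(F)(i), and item (2) via the formula in Theorem~\ref{comparison}(F)(ii) together with the identification of $\psi_{\m,r}$ from (C), the squarefreeness of $t_r$, and Lemma~\ref{gcd}(b). The paper compresses your item (2) argument into a single chain of inequalities $\ord_{t_r}R_r(f)\le\ord_{\rdm(t_r)}\rdm(R_r(f))\le\ord_{\rdm(\psi)}\rdm(R_r(f))=\ord_{\psi_{\m,r}}R_{\m,r}(f)$, but the content is identical.
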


\begin{proof}

Item (1) follows from Theorem \ref{comparison},(F) and Corollary \ref{length}.

Let $\rdm(\psi)$ be the irreducible factor of $\rdm(t_r)$ corresponding to $\m$, for some $\psi\in A_r[y]$. By (C) and (F) of Theorem \ref{comparison},
\begin{align*}
\ord_\ty f=&\ \ord_{t_r}R_r(f)\le\ord_{\rdm(t_r)}\rdm(R_r(f))\le\ord_{\rdm(\psi)}\rdm(R_r(f))\\
=&\ \ord_{\rdm(\psi)(\sigma_ry)}\rdm(R_r(f))(\sigma_ry)=\ord_{\psi_{\m,r}}R_{\m,r}(f)=\ord_{\ty_\m}f.
\end{align*}
If $R_r(f)=t_r^{\ord_\ty(f)}q$, with $\gcd_{A_r}(q,t_r)=1$, the two inequalities in this chain become equalities, by Lemma \ref{gcd}.
\end{proof}

\section{Arithmetic properties of number fields encoded by SF-types}\label{secArit}

Let $f\in\Z[x]$ be a monic irreducible polynomial of degree $n>1$. Consider the number field $K=\Q(\t)$ generated by some root $\t\in\qb$ of $f$. Let $\Z_K$ be the ring of integers of $K$.

For a given prime number $p$, let $\ord_p\colon \qb_p^{\,*}\longrightarrow \Q$ be the canonical extension of the $p$-adic valuation to an algebraic closure of $\Q_p$.

Let $\pp$ be the set of prime ideals in $\Z_K$ lying above $p$.
For any $\p\in\pp$, denote $\F_\p=\Z_K/\p$. Let $e_\p$ be the ramification index of $\p$, and $f_\p=\dim_{\Z/p\Z}\F_\p$ its residual degree.  Let $v_{\p}$ be the discrete valuation on $K$ induced by $\p$ and let $\Z_\p\subset K$ be its valuation ring. The canonical isomorphism $\F_\p\simeq  \Z_\p/\p\Z_\p$ will be considered as an identity.

Consider the normalized valuation:
$$
w_\p\colon K\lra \Q\cup\{\infty\}, \quad w_\p(\alpha)=v_\p(\alpha)/e_\p,
$$
which extends $\ord_p$ to $K$.
Endow $K$ with the $\p$-adic topology and fix a topological embedding $\iota_\p\colon K\hookrightarrow \qb_p$. Then,
$$
w_\p(\alpha)=\ord_p(\iota_\p(\alpha)),\quad\forall \,\alpha\in K.
$$

\noindent{\bf Notation. }For any $h\in\Z_p[x]$ we abuse of language and write $w_\p(h(\t))$ instead of $\ord_p(h(\iota_\p(\t)))$.\medskip

The polynomial $f$ factorizes in $\Z_p[x]$ as  $f=\prod_{\p\in\pp}F_\p$, where $F_\p$ is the minimal polynomial of $\iota_\p(\t)$ over $\Q_p$.

Consider an integer $N>1$ and an SF-type over $(\Z,\ord_N)$ of order $r>0$,
$$
\ty=\left(t_0;(g_1,\la_1,t_1);\dots;(g_r,\la_r,t_r)\right),
$$
with truncates $\ty_i=\op{Trunc}_i(\ty)$ for $0\le i\le r$. Suppose that  $\rho:=\ord_p(N)>0$.

Throughout this section we make the following\medskip

\noindent{\bf Assumptions: }(1) \ $f$ is $\ty_i$-robust for all $0\le i\le r$,

 (2) \ Either $\rho=1$, or $\ty$ is unramified.

\subsection{Prime ideals attached to an SF-type}
Let $A$ be the artinian algebra attached to $\ty$, and let $\{\ty_\m\mid \m\in\mpa\}$ be the  tree of irreducible $p$-types associated with $\ty$ in Theorem \ref{comparison}. We define
$$
\pt=\bigcup\nolimits_{\m\in\mpa}\ptm,\qquad \ptm:=\left\{\p\in\pp\mid\; \ty_\m\mid F_\p\right\}.
$$
As we saw in the proof of item (E) of Theorem \ref{comparison}, an irreducible polynomial in $\Z_p[x]$ cannot be divided by two different $\ty_\m$'s; hence, these sets $\ptm$ are pairwise disjoint.

If $\ty\mid f$, then $\ty_\m\mid f$ for all $\m$, by Corollary \ref{squarefree}. Since $$0<\ord_{\ty_\m}(f)=\sum\nolimits_{\p\in\pp}\ord_{\ty_\m}(F_\p),$$ the sets $\ptm$ are all non-empty in this case.

Let us fix $g\in\rep(\ty)$ and denote $N_{r+1}=N_{v_r,g}$. For each $\m\in\mpa$, let $\phi_\m\in\Z_p[x]$ be the irreducible factor of $g$ attached to $\m$ in Theorem \ref{comparison}.

\begin{lemma}\label{Pslope}
Let $\p\in\pp_{\ty_\m}$ for some $\m\in\mpa$. Then, there is a unique slope $-\mu$ of $N_{r+1}^-(f)$ and a unique monic irreducible factor $\psi$ of $R_{v_{\m,r},\phi_\m,\rho\mu}(f)$ such that the irreducible type $(\ty_\m;(\phi_\m,\rho\mu,\psi))$ divides $F_\p$.
\end{lemma}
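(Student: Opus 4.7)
The plan is to bootstrap from the classical Montes theory applied to the irreducible $p$-type $\ty_\m$ and the irreducible $p$-adic factor $F_\p$, and then transport slopes and residual data back to the SF-framework via the comparison morphism of Theorem~\ref{comparison}. Under our standing assumptions (either $\rho=1$ or $\ty$ unramified), we have $\gcd(\rho,e_r)=1$, so Theorem~\ref{comparison}(H) provides the factorization $g=\prod_{\n\in\mpa}\phi_\n$ with $\phi_\n\in\rep(\ty_\n)$; this is the bridge connecting the $\ty$-level and $\ty_\m$-level objects. The proof naturally splits into three moves: producing the unique enlargement on the $\ty_\m$-side, propagating from $F_\p$ to $f$, and matching slopes of the $\ty_\m$-Newton polygon to slopes of $N_{r+1}^-(f)$.

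First, since $\ty_\m$ is irreducible of order $r$ and divides $F_\p$ with $F_\p$ irreducible in $\Z_p[x]$, I would invoke the classical Theorem of the Polygon and of the Residual Polynomial for irreducible types (see \cite[Thm.~2.11]{GMN} and \cite{gen}) to obtain a unique slope $-\mu^*$ of $N^-_{v_{\m,r},\phi_\m}(F_\p)$ and a unique monic irreducible factor $\psi\in\F_{\m,r+1}[y]$ of $R_{v_{\m,r},\phi_\m,\mu^*}(F_\p)$ such that the irreducible type $(\ty_\m;(\phi_\m,\mu^*,\psi))$ divides $F_\p$. Next, I would propagate this data to $f$ via the factorization $f=\prod_{\p'\in\pp}F_{\p'}$: because $\ty_\m$ is irreducible, every polynomial is $\ty_\m$-robust, so Corollary~\ref{product} yields $N^-_{v_{\m,r},\phi_\m}(f)=\sum_{\p'}N^-_{v_{\m,r},\phi_\m}(F_{\p'})$ and Theorem~\ref{basicprs}(C) yields $R_{v_{\m,r},\phi_\m,\mu^*}(f)=\prod_{\p'}R_{v_{\m,r},\phi_\m,\mu^*}(F_{\p'})$. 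Hence $-\mu^*$ is a slope of $N^-_{v_{\m,r},\phi_\m}(f)$ and $\psi$ is an irreducible factor of $R_{v_{\m,r},\phi_\m,\mu^*}(f)$.

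The crucial third step — and the main obstacle — is to establish the level-$(r{+}1)$ analog of Theorem~\ref{comparison}(F)(i):
\[
N^-_{v_{\m,r},\phi_\m}(f)=E_\rho\bigl(N^-_{r+1}(f)\bigr),
\]
which forces $\mu^*=\rho\mu$ for a unique slope $-\mu$ of $N^-_{r+1}(f)$. I would mirror the argument of the proof of Theorem~\ref{comparison}(F) by starting from the canonical $g$-expansion $f=\sum_s a_sg^s$ and substituting $g=\phi_\m G_\m$, where $G_\m=g/\phi_\m$ is supplied by Theorem~\ref{comparison}(H), to produce a non-canonical $\phi_\m$-expansion $f=\sum_s(a_sG_\m^s)\phi_\m^s$. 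Since $g$ is $\ty$-robust, Theorem~\ref{comparison}(G) gives $v_{\m,r}(g)=\rho v_r(g)$; together with the multiplicativity of $v_{\m,r}$ on products (Theorem~\ref{basicprs}(C), free in the irreducible setting), this should yield $v_{\m,r}(a_sG_\m^s\phi_\m^s)=\rho v_r(a_sg^s)$ for every $s$, so that the cloud of points attached to the non-canonical $\phi_\m$-expansion is exactly $E_\rho$ of the $g$-expansion cloud. The delicate point is the admissibility of this $\phi_\m$-expansion (needed to invoke Lemma~\ref{admissible} and identify the cloud polygon with the actual $\phi_\m$-Newton polygon); I would verify it by computing residual coefficients at vertices exactly as in equation~(\ref{cs}) of the proof of Theorem~\ref{comparison}(F), exploiting the hypothesis that $f$ is $\ty_i$-robust for all $i\le r$ and that the constants $\xi_i$ of Theorem~\ref{comparison}(B) are units in $\F_{\m,r+1}$.

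Uniqueness of $(\mu,\psi)$ transfers immediately from the uniqueness of $(\mu^*,\psi)$ on the $\ty_\m$-side via the bijection $\mu\mapsto\rho\mu$. The main technical subtlety lies in the third step: the hypothesis "$f$ is $\ty_i$-robust for $i\le r$" is weaker than robustness at a hypothetical level $r+1$, so the argument must carefully compare only those objects (Newton polygons, residual coefficients) whose definitions do not depend on the not-yet-chosen level-$(r{+}1)$ data $\lambda_{r+1},t_{r+1}$; fortunately $N^-_{v_r,g}(f)$ is intrinsic to $\ty$ and $g$, which makes the polygon comparison well-posed.
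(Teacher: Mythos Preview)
Your proposal is correct and follows essentially the same route as the paper, just in far more detail and with the steps reordered. The paper's proof is two lines: it first asserts the polygon identity $N_{v_{\m,r},\phi_\m}^-(f)=E_\rho(N_{r+1}^-(f))$ ``by Theorem~\ref{comparison}'' (this is exactly your step~3, i.e.\ the proof of (F)(i) run one level higher, now legitimated by (H) at level $r$), and then quotes \cite[Thms.~3.1,~3.7]{GMN} applied directly to $f$ and the irreducible type $\ty_\m$ with representative $\phi_\m$ to get the unique pair $(\rho\mu,\psi)$ for $\p$. Your step~2 (working first with $F_\p$ and then propagating to $f$ via multiplicativity) is a harmless detour --- the GMN theorems for $f$ already encapsulate that argument --- and your citation of \cite[Thm.~2.11]{GMN} should rather be to Lemma~2.4 there (one-sidedness and residual-power structure for irreducible polynomials). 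You are also right to flag the robustness subtlety: the admissibility computation you outline needs each coefficient $a_s$ of the $g$-expansion of $f$ to be $\ty$-robust with $\gcd_{A_r}(R_r(a_s),t_r)=1$; the paper tacitly assumes this (it is verified by the {\tt Newton} routine in the SF-OM context where the lemma is used), whereas the stated standing assumptions only record $\ty_i$-robustness for $i\le r$.
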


\begin{proof}
By Theorem \ref{comparison}, $N_{v_{\m,r},\phi_\m}(f)=E_\rho(N_{r+1}(f))$, so that the slopes of this polygon are $-\rho\la$, for $-\la$ running on the slopes of $N_{r+1}(f)$.

The result follows from \cite[Thms. 3.1, 3.7]{GMN}.
\end{proof}

\begin{proposition}\label{splitting}
Suppose that $\ty\mid f$ and for each slope $-\la$ of $N_{r+1}^-(f)$ Algorithm \ref{sfdA} outputs a squarefree decomposition of $R_{v_r,g,\la}(f)$ in $A[y]$ with strongly unitary squarefree factors.  Consider the types $\ty_{\la,t}=(\ty;(g,\la,t))$ for $-\la$ a slope of $N_{r+1}^-(f)$ and $t\in A[y]$ a squarefree factor of $R_{v_r,g,\la}(f)$.

If the least positive denominator $e_\la$ of every slope $-\la$ satisfies  $\gcd(\rho,e_\la)=1$, then we have a splitting of $\pt$ into a union of pairwise disjoint subsets:
$$\pt=\bigcup\nolimits_{(\la,t)}\pp_{\ty_{\la,t}}$$
\end{proposition}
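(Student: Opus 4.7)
The plan is to match each $\p \in \pt$ with exactly one pair $(\la, t)$ by combining Lemma \ref{Pslope} with a careful transfer between the factorization of $R_{v_{\m,r},\phi_\m,\rho\la}(f)$ in $\F_{\m,r}[y]$ and the squarefree decomposition of $R_{v_r,g,\la}(f)$ in $A[y]$.

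For existence, fix $\p \in \pt$. Then $\p \in \ptm$ for a unique $\m \in \mpa$, since the subsets $\ptm$ are pairwise disjoint. Lemma \ref{Pslope} produces a unique slope $-\mu$ of $N_{r+1}^-(f)$ and a unique monic irreducible $\psi \in \F_{\m,r}[y]$ dividing $R_{v_{\m,r},\phi_\m,\rho\mu}(f)$ such that $\ty_\m^\star := (\ty_\m;(\phi_\m,\rho\mu,\psi))$ divides $F_\p$. To pin down $t$, I invoke Theorem \ref{comparison}(F)(ii) applied to the SF-type $\ty_{\mu,t}$ of order $r+1$; Assumption (2) of section \ref{secArit} makes $\ty$ unramified whenever $\rho > 1$, and the hypothesis $\gcd(\rho,e_\mu) = 1$ eliminates the exponent on the variable, yielding an identity of the form
$$
R_{v_{\m,r},\phi_\m,\rho\mu}(f)(y) \,=\, \tau\cdot\rdm\!\left(R_{v_r,g,\mu}(f)\right)(\sigma y),
$$
for units $\sigma,\tau \in \fm^*$ (both independent of the choice of $t$). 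Hence $\psi$ corresponds bijectively to a unique monic irreducible factor $\hat\psi$ of $\rdm(R_{v_r,g,\mu}(f))$ in $\fmy$. By Lemma \ref{sfd}, the SFD output $R_{v_r,g,\mu}(f) = \op{lc}\,s_1^{\ell_1}\cdots s_k^{\ell_k}$ reduces modulo $\m$ to the canonical squarefree decomposition $\prod_i \rdm(s_i)^{\ell_i}$ of $\rdm(R_{v_r,g,\mu}(f))$; so $\hat\psi$ divides exactly one $\rdm(s_j) = \rdm(t)$, fixing $t$ uniquely among the squarefree factors. Parametrizing a maximal ideal $\m' \in \mx_p(A[y]/(t))$ above $\m$ by $\hat\psi$ as in section \ref{subsecTreeM} and applying Theorem \ref{comparison}(C) to $\ty_{\mu,t}$, the irreducible $p$-type of $\ty_{\mu,t}$ attached to $\m'$ coincides with $\ty_\m^\star$. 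Since $\ty_\m^\star \mid F_\p$, this gives $\p \in \pp_{\ty_{\mu,t}}$.

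For disjointness, suppose $\p \in \pp_{\ty_{\la,t}} \cap \pp_{\ty_{\la',t'}}$. Then the two associated irreducible $p$-types at level $r+1$ both divide $F_\p$ and both extend $\ty_\m$, for the same unique $\m \in \mpa$ with $\p \in \ptm$. The uniqueness part of Lemma \ref{Pslope} forces $\la = \la'$ and identifies the same $\psi$, hence the same $\hat\psi$; the pairwise coprimality of $s_1,\dots,s_k$ then forces $t = t'$.

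The main technical obstacle is justifying the displayed identity. Theorem \ref{comparison}(F)(ii) delivers it, but its hypothesis demands that $f$ be $\ty_{\mu,t}$-robust, i.e., the coefficients $a_s$ of the canonical $g$-expansion of $f$ are $\ty$-robust and satisfy $\gcd_A(R_r(a_s),t) = 1$. The first part follows from Assumption (1) of section \ref{secArit}; the second should be derivable from $\ty \mid f$ together with the SFD output being strongly unitary and the $s_i$ pairwise coprime. Should a clean direct verification prove elusive, a fallback is to replace $f$ by $F_\p$ in the key invocation of Theorem \ref{comparison}(F)(ii), since $F_\p$ is automatically $\ty_{\mu,t}$-robust (being irreducible with $\ty_\m \mid F_\p$), and the identity needed concerns only the specific factor $\psi$ picked out by Lemma \ref{Pslope}.
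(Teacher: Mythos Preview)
Your argument follows essentially the same route as the paper's proof: pick $\p\in\pt$, locate it in some $\ptm$, apply Lemma~\ref{Pslope} to get the pair $(\mu,\psi)$, use Theorem~\ref{comparison}(F)(ii) for $\ty_{\mu,t}$ to rewrite $R_{v_{\m,r},\phi_\m,\rho\mu}(f)$ as a twist of $\rdm(R_{v_r,g,\mu}(f))$, and then match $\psi$ with a unique squarefree factor $t$ via the SFD output; disjointness comes from the uniqueness in Lemma~\ref{Pslope}. The paper treats disjointness as ``easy to check'' and only writes out the covering, but the content is the same.

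Two remarks on your robustness discussion. First, you slightly misstate the condition: $\ty_{\mu,t}$-robustness of $f$ (for the order-$(r{+}1)$ type) requires the coefficients $a_s$ of the $g$-expansion to be $\ty$-robust and to satisfy $\gcd_{A_r}(R_r(a_s),t_r)=1$, where $t_r$ is the datum of $\ty$ at level $r$, \emph{not} the new factor $t$ at level $r{+}1$. In particular the condition is independent of $(\mu,t)$, as the paper notes after Definition~\ref{defrobust}; so there is nothing to derive ``from $\ty\mid f$ together with the SFD output'' about $t$. Second, the paper does not verify this condition in the proof either: it is implicit in the SF-OM context, since the {\tt Newton} subroutine (Section~\ref{secOM}) certifies exactly this robustness before the Newton polygon and residual polynomials are computed, and Lemma~\ref{Pslope} itself already relies on the same hypothesis. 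Your fallback of substituting $F_\p$ is not available here: $F_\p\in\Z_p[x]$ while $\ty_{\mu,t}$ is an SF-type over $(\Z,\ord_N)$ with $N$ not assumed prime, so the notion of $\ty_{\mu,t}$-robustness does not apply to $F_\p$.
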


\begin{proof}
It is easy to check that these $\pp_{\ty_{\la,t}}$ are pairwise disjoint subsets of  $\pp_\ty$. Let us show that they cover $\pp_\ty$.

Take $\p\in\pp_\ty$. There is a unique $\m\in\mpa$ such that $\p\in\pp_{\ty_\m}$. By Lemma \ref{Pslope}, there is a type $\ty'_\m=(\ty_\m;(\phi_\m,\rho\mu,\psi))$ dividing $F_\p$, for a certain slope $-\mu$ of $N_{r+1}^-(f)$.

By Theorem \ref{comparison},(F) applied to the type $\ty_{\mu,t}$ (for any choice of $t$), we have $$R_{v_{\m,r},\phi_\m,\rho\mu}(f)=\tau_{r+1}(f)\rdm(R_{v_r,g,\mu}(f))(\sigma_{r+1}y),$$for some non-zero constants $\tau_{r+1}(f),\sigma_{r+1}\in\F_{\m}$. Take any monic $\varphi\in A[y]$ such that $\psi(y)=\sigma_{r+1}^{-\deg\varphi}\rdm(\varphi)(\sigma_{r+1}y)$. Clearly, there is a unique squarefree factor $t$ of $R_{v_r,g,\mu}(f)$ such that $\rdm(t)$ is divisible by the irreducible factor $\rdm(\varphi)$ of $\rdm(R_{v_r,g,\mu}(f))$ in $\F_\m[y]$. For this choice of $t$, let $A'=A[t]/(t)$ be the artinian inductive algebra associated with the type $\ty_{\mu,t}$. Clearly, $\m'=(\m,\varphi)$ determines a maximal ideal of $A'$ for which $\ty'_\m=(\ty_{\mu,t})_{\m'}$. Since, $\ty'_\m\mid F_\p$, the prime ideal $\p$ belongs to $\pp_{\ty_{\mu,t}}$.
\end{proof}

\subsection{Computation of $w_\p$ in terms of data of the SF-type}

\begin{proposition}\label{value}
For any $h\in\Z_p[x]$ we have
\begin{equation}\label{wpvalue}
w_\p(h(\t))\ge \rho\, v_r(h)/e_1\cdots e_r,\quad \forall\,\p\in\pt.
\end{equation}
If $h$ is $\ty$-robust and $\gcd_{A_r}(R_r(h),t_r)=1$, then equality holds.
\end{proposition}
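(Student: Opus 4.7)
The plan is to reduce the desired inequality to the classical OM estimate for the irreducible $p$-type $\ty_\m$ attached to $\ty$, and then translate back via Theorem~\ref{comparison}. Any $\p\in\pt$ lies in $\ptm$ for some $\m\in\mpa$, so $\ty_\m\mid F_\p$. The classical theory of irreducible $p$-types over $(\Z_p,\ord_p)$, as developed in \cite{GMN,gen,m}, supplies the basic estimate
\[
w_\p(h(\t))\ \ge\ \frac{v_{\m,r}(h)}{e_{\m,1}\cdots e_{\m,r}},
\]
with equality whenever $\psi_{\m,r}\nmid R_{\m,r}(h)$ in $\F_{\m,r}[y]$. I would invoke this as a black box; the only external input is pinning down its normalization.

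The next step is to rewrite the right-hand side in terms of data of $\ty$. Under our standing hypothesis---either $\rho=1$ or $\ty$ is unramified---we always have $\gcd(\rho,e_r)=1$. Theorem~\ref{comparison}(A) then gives $e_{\m,i}=e_i$ for all $1\le i\le r$, so $e_{\m,1}\cdots e_{\m,r}=e_1\cdots e_r$. Theorem~\ref{comparison}(G) supplies $v_{\m,r}(h)\ge \rho\, v_r(h)$. Plugging both facts into the classical estimate yields (\ref{wpvalue}).

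For the equality claim I plan to show that both inequalities become equalities under the stronger hypotheses. If $h$ is $\ty$-robust, the estimate $v_{\m,r}(h)\ge\rho\, v_r(h)$ is an equality by the second part of Theorem~\ref{comparison}(G). To upgrade the classical inequality to an equality I need $\psi_{\m,r}\nmid R_{\m,r}(h)$. By Theorem~\ref{comparison}(F)(ii), applied with $\gcd(\rho,e_r)=1$,
\[
R_{\m,r}(h)(y)=\tau_r(h)\,\rdm(R_r(h))(\sigma_r y),\qquad \tau_r(h),\sigma_r\in\F_{\m,r}^*.
\]
The hypothesis $\gcd_{A_r}(R_r(h),t_r)=1$, together with Lemma~\ref{gcd}(b), forces $\gcd(\rdm(R_r(h)),\rdm(t_r))=1$ in $\F_{\m,r}[y]$. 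Since $\psi_{\m,r}(y)$ is, up to a unit, $\rdm(\psi)(\sigma_r y)$ for the irreducible factor $\rdm(\psi)$ of $\rdm(t_r)$ corresponding to $\m$, the coprimality survives the invertible substitution $y\mapsto\sigma_r y$, so $\gcd(R_{\m,r}(h),\psi_{\m,r})=1$.

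The main obstacle I anticipate is precisely the invocation of the classical estimate: the normalization of $v_{\m,r}$ and its exact relation to $w_\p$ differs slightly between \cite{m}, \cite{GMN}, and \cite{gen}, so some care is needed to match the conventions used in Theorem~\ref{comparison}. Once that is locked down, the rest is bookkeeping with items (A), (F)(ii), and (G) of Theorem~\ref{comparison} combined with Lemma~\ref{gcd}.
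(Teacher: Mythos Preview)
Your proposal is correct and follows essentially the same route as the paper: reduce to the classical estimate for irreducible $p$-types (the paper cites \cite[Prop.~2.9]{GMN} explicitly, which fixes the normalization issue you flag), then use Theorem~\ref{comparison}(A),(G) to translate. The only cosmetic difference is that, for the equality case, the paper packages the implication ``$\gcd_{A_r}(R_r(h),t_r)=1\Rightarrow \ty_\m\nmid h$'' as Corollary~\ref{squarefree}(2), whereas you inline its proof via (F)(ii) and Lemma~\ref{gcd}(b); the content is identical.
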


\begin{proof}
Take $\p\in\ptm$ for some $\m\in\mpa$. By \cite[Prop. 2.9]{GMN},
\begin{equation}\label{wpvaluem}
w_\p(h(\t))\ge v_{\m,r}(h)/e_{\m,1}\cdots e_{\m,r},
\end{equation}
and equality holds if and only if $\ty_\m\nmid h$.
By Theorem \ref{comparison}, $e_{\m,i}=e_i$ for all $1\le i\le r$, and  $v_{\m,r}(h)\ge \rho\, v_r(h)$. This proves (\ref{wpvalue}).

If $h$ is $\ty$-robust, then $v_{\m,r}(h)=\rho\, v_r(h)$, by Theorem \ref{comparison}. If moreover $\gcd_{A_r}(R_r(h),t_r)=1$, then $\ty\nmid h$. By Corollary \ref{squarefree}, $\ty_\m\nmid h$, so that equality holds in (\ref{wpvaluem}). Thus, equality holds in (\ref{wpvalue}).
\end{proof}

\begin{corollary}\label{previous}For all $\p\in\pt$,
$w_\p(g_r(\t))=\rho(V_r+\la_r)/(e_1\cdots e_{r-1})$.
\end{corollary}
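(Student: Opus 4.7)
The plan is to apply Proposition \ref{value} directly to $h = g_r$ and the full type $\ty$ of order $r$. To use the equality clause of that proposition, I must check two things: that $g_r$ is $\ty$-robust and that $\gcd_{A_r}(R_r(g_r), t_r) = 1$. Once these are verified, the formula will follow from a short computation of $v_r(g_r)$.

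For the robustness, I will exploit the fact that the canonical $g_r$-expansion of $g_r$ is the trivial one: $g_r = 0 + 1 \cdot g_r$, whose only nonzero coefficient is the constant $a_1 = 1 \in \oo[x]$. A straightforward downward induction on the level shows that the constant polynomial $1$ is $\ty_i$-robust at every level $i$ and that $R_i(1) = 1$. In particular $\gcd_{A_{r-1}}(R_{r-1}(1), t_{r-1}) = 1$, so $g_r$ satisfies Definition \ref{defrobust} and is $\ty$-robust.

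Next I compute the relevant data at level $r$. Since only $s = 1$ contributes to the $g_r$-expansion and $u_1 = v_{r-1}(g_r) = V_r$, the Newton polygon $N_r(g_r)$ reduces to the single point $(1, V_r)$. Thus $N_r^-(g_r) = \{(1,V_r)\}$, the $\la_r$-component $S_r(g_r)$ is this same singleton ($s_0 = s'_0 = 1$, $d = 0$), and $R_r(g_r)$ is the degree-zero polynomial with constant term
$$c_1 = z_{r-1}^{\nu_{r-1}(1)} R_{r-1}(1)(z_{r-1}) = 1 \in A_r,$$
using that $s_{r-1}(1) = u_{r-1}(1) = 0$ (so $\nu_{r-1}(1) = 0$), and the convention $z_0^{\nu_0(a)} = 1$ when $r = 1$. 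Therefore $\gcd_{A_r}(R_r(g_r), t_r) = \gcd_{A_r}(1, t_r) = 1$. Formula (\ref{posteriori}) then gives $v_r(g_r) = e_r(u_1 + \la_r) = e_r(V_r + \la_r)$.

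Applying Proposition \ref{value} (equality case) with $h = g_r$ yields
$$w_\p(g_r(\t)) = \frac{\rho \, v_r(g_r)}{e_1 \cdots e_r} = \frac{\rho \, e_r(V_r + \la_r)}{e_1 \cdots e_r} = \frac{\rho(V_r + \la_r)}{e_1 \cdots e_{r-1}},$$
which is exactly the claimed formula. There is no real obstacle in this proof; it is essentially the observation that, because the $g_r$-expansion of $g_r$ has a single coefficient equal to $1$, both the robustness hypothesis and the coprimality hypothesis of Proposition \ref{value} are automatic, and the valuation $v_r(g_r)$ can be read off directly from (\ref{posteriori}).
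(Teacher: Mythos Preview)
Your proof is correct and follows essentially the same approach as the paper: observe that the $g_r$-expansion of $g_r$ consists of the single term $1\cdot g_r$, so $N_r(g_r)=\{(1,V_r)\}$, $R_r(g_r)=1$, $v_r(g_r)=e_r(V_r+\la_r)$, and then apply Proposition~\ref{value}. The paper's proof simply asserts that $g_r$ is $\ty$-robust, while you spell out why the single coefficient $a_1=1$ satisfies Definition~\ref{defrobust}; this extra care is welcome but does not change the strategy.
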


\begin{proof}
The Newton polygon $N_r(g_r)$ contains a single point $(1,V_r)$, so that $v_r(g_r)=e_r(V_r+\la_r)$ and $R_r(g_r)=1$. The desired equality follows from Proposition \ref{value}, since $g_r$ is $\ty$-robust.
\end{proof}

The next result completes the computation of $w_\p(g_r(\t))$ for all $\p\in\pp$.

\begin{lemma}\label{allps}
Let $\p\in\pp$ such that $\p\not\in\pp_{\ty_\ell}$ for some minimal $0\le \ell\le r$. If $\ell>0$, let $-\mu$ be the slope of  $N_\ell^-(f)$ attached to $\p\in\pp_{\ty_{\ell-1}}$, as explained in Lemma \ref{Pslope}. If $\ell=0$ take $\mu=0$. Then,
$$
w_\p(g_r(\t))=
\rho\, (m_r/m_\ell)\,(V_\ell+\delta)/(e_1\cdots e_{\ell-1}),
$$
where $\delta=\mu$ if $\ell=r$, and $\delta=\mn\{\la_\ell,\mu\}$ if $\ell<r$.
\end{lemma}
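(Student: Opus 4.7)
My plan is to attach to $\p$ an irreducible $p$-type $\ty^\star$ of order $\ell$ that divides $F_\p$ and then use the valuation inequality \cite[Prop. 2.9]{GMN}, namely $w_\p(h(\t))\ge v^\star_\ell(h)/(e^\star_1\cdots e^\star_\ell)$ with equality when $\ty^\star\nmid h$, applied to $h=g_r$. For $\ell>0$, Lemma \ref{Pslope} supplies $\ty^\star=(\ty_{\ell-1,\m};(\phi_{\m,\ell},\rho\mu,\psi))$, where $\m\in\mx_p$ of the artinian algebra of $\ty_{\ell-1}$ is the unique maximal ideal with $\ty_{\ell-1,\m}\mid F_\p$. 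For $\ell=0$, take $\ty^\star=(\psi_0^\p)$ where $\psi_0^\p\in\F_p[y]$ is the (unique) irreducible factor of $\rdp(F_\p)$ attached to $\p$.

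To verify $\ty^\star\nmid g_r$, I would factor $g_r=\prod_\n\phi_\n$ by Theorem \ref{comparison}(H) applied to $\ty_{r-1}$ (legitimate since the assumptions force $\gcd(\rho,e_{r-1})=1$), with each $\phi_\n\in\rep(\ty_{r-1,\n})$. By \cite[Lem. 2.4]{GMN}, $\ty^\star\mid\phi_\n$ would force $\ty^\star$ to equal the level-$\ell$ truncation of $\ty_{r-1,\n}$, whose slope $\rho\la_\ell$ and residual factor $\psi_{\n',\ell}$ (with $\n'=\n\cap A_{\ell+1}$, determined by $t_\ell$ via Theorem \ref{comparison}(C)) do not match the data of $\ty^\star$ in any case: if $\mu\ne\la_\ell$ the slopes disagree, and if $\mu=\la_\ell$ the hypothesis $\p\notin\pp_{\ty_\ell}$ implies $\psi\ne\psi_{\n',\ell}$ for every $\n'$ refining $\m$. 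Multiplicativity $R^\star_\ell(g_r)=\prod_\n R^\star_\ell(\phi_\n)$ (Theorem \ref{basicprs}(C), valid since every $\phi_\n\in\Z_p[x]$ is $\ty^\star$-robust) together with the irreducibility of $\psi$ yields $\ty^\star\nmid g_r$; the case $\ell=0$ reduces to noting that $\rdp(g_r)$ is a product of powers of the factors of $t_0$, while $\psi_0^\p$ is not such a factor.

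To compute $v^\star_\ell(g_r)$ I would observe that $\ty^\star$ and the irreducible type $\ty_{\m',\ell}$ attached to $\ty_\ell$ (for any $\m'\in\mx_p(A_{\ell+1})$ with $\m'\cap A_\ell=\m$) share the same data at levels $<\ell$ and the same key polynomial $\phi_{\m,\ell}$ at level $\ell$; hence their Newton polygon operators at level $\ell$ coincide on every polynomial in $\Z_p[x]$. Applying Theorem \ref{comparison}(F)(i) to $\ty_\ell$ and the $\ty_\ell$-robust polynomial $g_r$ gives $(N^\star_\ell)^-(g_r)=E_\rho(N_\ell^-(g_r))$, so $v^\star_\ell(g_r)/e^\star_\ell=\rho\min_s\{u_s+s\mu\}$ taken over the points $(s,u_s)$ of $N_\ell^-(g_r)$. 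The standard shape of this polygon (the single point $(1,V_r)$ when $\ell=r$, and the one-sided segment of slope $-\la_\ell$ with endpoints $(0,(m_r/m_\ell)(V_\ell+\la_\ell))$ and $(m_r/m_\ell,(m_r/m_\ell)V_\ell)$ when $\ell<r$) makes the minimum an elementary geometry exercise: the line of slope $-\mu$ touches from below at the left endpoint when $\mu\ge\la_\ell$ and at the right endpoint when $\mu<\la_\ell$, yielding $(m_r/m_\ell)(V_\ell+\delta)$ with $\delta=\min\{\la_\ell,\mu\}$ (respectively $\delta=\mu$ in the case $\ell=r$). Dividing by $e^\star_1\cdots e^\star_{\ell-1}=e_1\cdots e_{\ell-1}$ (Theorem \ref{comparison}(A)) produces the claimed formula. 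The main obstacle is the justification of the one-sided shape of $N_\ell^-(g_r)$ at every lower level $\ell<r$, a structural property of key polynomials that is inductive but needs careful verification.
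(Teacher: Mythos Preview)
Your route is genuinely different from the paper's. The paper factors $g_r=\prod_{\n\in\mx_p(A_r)}\phi_\n$ via Theorem~\ref{comparison}(H), then invokes \cite[Prop.~4.7]{newapp} to obtain an explicit formula for each $w_\p(\phi_\n(\t))$ in terms of the intersection index $i(\m,\n)$ in the tree $\mm_p$, and finally performs a combinatorial summation over the tree using identity~(\ref{summ}) and the recursion~(\ref{recurrence}). Your approach bypasses this summation entirely by applying \cite[Prop.~2.9]{GMN} once to $g_r$ itself, which is conceptually cleaner when it works.

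There are, however, two gaps. You explicitly flag the second (the one-sided shape of $N_\ell^-(g_r)$ for $\ell<r$), but you do not address the first: your appeal to Theorem~\ref{comparison}(F)(i) for the type $\ty_\ell$ requires $g_r$ to be $\ty_\ell$-robust. Robustness of $g_r$ at level $r-1$ is guaranteed because $g_r\in\rep(\ty_{r-1})$, and robustness at level $r$ is trivial (the $g_r$-expansion of $g_r$ has a single unit coefficient). But for $\ell<r-1$, $\ty_\ell$-robustness concerns the coefficients of the $g_\ell$-expansion of $g_r$, and nothing in the paper establishes that these coefficients satisfy $\gcd_{A_{\ell-1}}(R_{\ell-1}(a_s),t_{\ell-1})=1$. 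The hooks in the {\tt Representative} routine only certify $\ty_{r-1}$-robustness, and the standing assumptions of Section~\ref{secArit} concern $f$, not $g_r$. Without this, you cannot transfer $N_\ell^-(g_r)$ to $(N^\star_\ell)^-(g_r)$ via $E_\rho$, so the Newton-polygon geometry at the heart of your computation is unfounded.

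Both gaps are plausibly repairable --- for classical irreducible types one has results such as \cite[Lem.~2.17]{GMN} showing that lower-level Newton polygons of key polynomials are one-sided, and one could try to establish the SF-analogue directly or to compute $(N^\star_\ell)^-(g_r)$ via the factorization $g_r=\prod_\n\phi_\n$ and additivity of Newton polygons. But either repair pushes you back toward the tree-indexed analysis the paper carries out, and neither is supplied here. As written, the argument is incomplete at a point you did not anticipate.
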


\begin{proof}
By Theorem \ref{comparison}, $g_r=\prod_{\n\in\mx_p(A_r)}\phi_\n$, so that
\begin{equation}\label{sum}
w_\p(g_r(\t))=\sum\nolimits_{\n\in\mx_p(A_r)}w_\p(\phi_\n(\t)).
\end{equation}

If $\ell=0$, then $\phi_\n$ and $F_\p$ are congruent modulo $p$ to a power of two different monic irreducible polynomials in $\Z/p\Z$; hence $w_\p(\phi_\n(\t))=0$ for all $\n$, so that $w_\p(g_r(\t))=0$.

Suppose $\ell>0$. Choose any $\m\in\mx_p(A_r)$ such that $\m_\ell=\m\cap A_\ell$ is the unique ideal in $\mx_p(A_\ell)$ such that $\p\in\pp_{(\ty_{\ell-1})_{\m_\ell}}$.
The computation of $w_\p(\phi_\n(\t))$ for $\n\in\mx_p(A_r)$ depends on the relative position of $(\ty_{r-1})_\n$ and $(\ty_{\ell-1})_{\m_\ell}$ in the tree of irreducible $p$-types attached to $\ty$. By (D) of Theorem \ref{comparison}, we may use instead the tree $\mm_p(A_r)$ of maximal ideals of the inductive artinian algebra $A_r$ containing $p$, which is easier to handle.

For any $\n\in\mx_p(A_r)$ we define the intersection index $i:=i(\m,\n)$
as the maximal index $0\le i\le r$ with $\m_i=\n_i$. Then, \cite[Prop. 4.7]{newapp} shows that
$$
w_\p(\phi_\n(\t))=
\begin{cases}
0, &\mbox{ if }i=0,\\
(m_{\n,r}/m_{\m,i})(V_{\m,i}+\rho\la_i)/(e_1\cdots e_{i-1}), &\mbox{ if }0<i<\ell,\\
(m_{\n,r}/m_{\m,\ell})(V_{\m,\ell}+\rho\,\delta)/(e_1\cdots e_{\ell-1}), &\mbox{ if }i\ge \ell.
\end{cases}
$$

For $0\le j\le k\le r$, denote:
$$
\begin{array}{l}
\ll_{j,k}(\m)=\{\n\in\mx_p(A_k)\mid i(\m,\n)=j\},\\
\mm_{j,k}(\m)=\{\n\in\mx_p(A_k)\mid \m_j=\n_j\}=\bigcup_{i=j}^k\ll_{i,k}(\m).
\end{array}
$$
We claim that
\begin{equation}\label{summ}
\sum\nolimits_{\n\in\mm_{j,k}(\m)}m_{\n,k}/m_{\m,j}=m_k/m_j.
\end{equation}
In fact, if we sum
$m_{\n,k}/m_{\m,j}$ over all $\n\in\mx_p(A_k)$ having the same previous node $\n'\in\mx_p(A_{k-1})$, we get $e_{k-1}f_{k-1}m_{\n',k-1}$ by (\ref{branches}). Hence,
$$
\sum\nolimits_{\n\in\mm_{j,k}(\m)}m_{\n,k}/m_{\m,j}=e_{k-1}f_{k-1}\sum\nolimits_{\n'\in\mm_{j,k-1}(\m)}m_{\n',k-1}/m_{\m,j}.
$$
An iteration of this argument proves (\ref{summ}), since $\mm_{j,j}(\m)=\{\m_j\}$.

By (\ref{sum}) and the explicit formulas for $w_\p(\phi_\n(\t))$, we have:
$$
w_\p(g_r(\t))=\sum_{k=1}^{\ell-1}\sum_{\n\in\ll_{k,r}(\m)}\dfrac{m_{\n,r}(V_{\m,k}+\rho\la_k)}{m_{\m,k}\,e_1\cdots e_{k-1}}+\sum_{k=\ell}^r\sum_{\n\in\ll_{k,r}(\m)}\dfrac{m_{\n,r}(V_{\m,\ell}+\rho\delta)}{m_{\m,\ell}\,e_1\cdots e_{\ell-1}}.
$$
By using $h_{\m,k}=\rho h_k$ and the identity (\ref{recurrence}) for $V_{\m,k}/e_1\cdots e_{k-1}$, we get
\begin{align*}
w_\p(g_r(\t))=&\;\sum_{k=1}^{\ell-1}\left(\sum_{\n\in\ll_{k,r}(\m)}\dfrac{m_{\n,r}}{m_{\m,k}}\right)\sum_{1\le j\le k}\dfrac{m_{\m,k}}{m_{\m,j}}\dfrac{\rho\,h_j}{\,e_1\cdots e_j}\\+&\;\sum_{k=\ell}^r\left(\sum_{\n\in\ll_{k,r}(\m)}\dfrac{m_{\n,r}}{m_{\m,\ell}}\right)\left(\sum_{1\le j< \ell}\dfrac{m_{\m,\ell}}{m_{\m,j}}\dfrac{\rho\,h_j}{e_1\cdots e_j}+\dfrac{\rho\delta}{e_1\cdots e_{\ell-1}}\right).
\end{align*}

If we group all terms involving $(\rho\,h_j)/(e_1\cdots e_j)$ for each $1\le j<\ell$, and we use (\ref{summ}), we see that $w_\p(g_r(\t))$ is equal to

\begin{align*}
\sum_{1\le j< \ell}&\left(\sum_{\n\in\mm_{j,r}(\m)}\dfrac{m_{\n,r}}{m_{\m,j}}\right)\dfrac{\rho\,h_j}{e_1\cdots e_j}+\left(\sum_{\n\in\mm_{\ell,r}(\m)}\dfrac{m_{\n,r}}{m_{\m,\ell}}\right)\dfrac{\rho\,\delta}{e_1\cdots e_{\ell-1}}\\
=&\;\sum_{1\le j< \ell}\dfrac{m_r}{m_j}\dfrac{\rho\,h_j}{\,e_1\cdots e_j}+\dfrac{m_r}{m_\ell}\,\dfrac{\rho\,\delta}{e_1\cdots e_{\ell-1}}=\dfrac{\rho\,m_r}{m_\ell}\,
\dfrac{V_\ell+\delta}{e_1\cdots e_{\ell-1}},
\end{align*}
the last equality again by (\ref{recurrence}).
\end{proof}

\subsection{Computation of the residue fields attached to prime ideals}\label{subsecRatfs}
Let $A=A_0[z_0,\dots,z_r]$ be the artinian algebra associated with $\ty$, and denote by $\rdp\colon A\rightarrow A/pA$ the homomorphism of reduction modulo $p$.

The isomorphism (\ref{local}) induces an isomorphism of $(\Z/p\Z)$-algebras:$$\rd_p(A)=A/pA\simeq \prod\nolimits_{\m\in\mx_p(A)}\fm.$$

In this section, we relate this algebra $\rd_p(A)$ with the residue fields $\F_\p$ of the prime ideals $\p\in\pp_\ty$.
To this end, we introduce some rational functions in $\Q(x)$.
We agree that $g_0=x$, $\pi_0=N$, and we define
\begin{equation}\label{ratfs}
\phi_i=g_i\,\pi_i^{-V_i},\quad \ga_i=\phi_i^{e_i}\pi_i^{-h_i},\quad \pi_{i+1}=\phi_i^{\ell_i}\pi_i^{\ell'_i},\quad 0\le i\le r.
\end{equation}
For $i>0$, let $h=\phi_i,\ga_i$, or $\pi_{i+1}$. We may express $h=N^{n_0}\,g_1^{n_1}\dots g_i^{n_i}\in \ss(v_r)^{-1}\oo[x]$, for certain exponents $n_0,\dots,n_r\in\Z$. In particular,
\begin{equation}\label{wrphii}
v_r(h)=e_{i+1}\cdots e_r\,v_i(h),\quad w_\p(h(\t))=\rho\,v_r(h)/(e_1\cdots e_r),
\end{equation}
since this holds for the $\ty$-robust polynomials $N,g_1,\dots,g_i$. The second identity follows from Proposition \ref{value}, because $R_r(g_j)$ is a unit in $A_r$ for $j\le r$.

\begin{lemma}\label{wrphir}
\begin{enumerate}
\item $v_r(\phi_r)=h_r$,\quad$v_r(\ga_r)=0$,\quad$v_r(\pi_{r+1})=1$.
\item For all $\p\in\pt$ we have
$$w_\p(\phi_r(\t))=\rho\,h_r/e_1\cdots e_r,\quad w_\p(\ga_r(\t))=0,\quad w_\p(\pi_{r+1}(\t))=\rho/e_1\cdots e_r.
$$
\end{enumerate}
\end{lemma}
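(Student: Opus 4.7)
The plan is to prove (1) by induction on the level $i$ from $0$ to $r$, and then to deduce (2) from (1) by combining Proposition \ref{value} with the multiplicativity of valuations on Laurent monomials in $N,g_1,\dots,g_r$.

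For (1), the base case $i=0$ is immediate: the data $e_0=1$, $h_0=0$, $V_0=0$ together with the B\'ezout identity force $\ell_0=0$, $\ell'_0=1$, so $\phi_0=\ga_0=x$ and $\pi_1=N$, whence $v_0(\phi_0)=v_0(\ga_0)=0$ and $v_0(\pi_1)=1$. For the inductive step, assume the three equalities at level $i$. Applying (\ref{wrphii}) with top index $i+1$ to $h=\pi_{i+1}$ gives $v_{i+1}(\pi_{i+1})=e_{i+1}\,v_i(\pi_{i+1})=e_{i+1}$. On the other hand, the $g_{i+1}$-expansion of $g_{i+1}$ consists of the single term sitting above $(1,V_{i+1})$, so the definition of $v_{i+1}$ directly yields $v_{i+1}(g_{i+1})=e_{i+1}(V_{i+1}+\la_{i+1})=e_{i+1}V_{i+1}+h_{i+1}$. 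Plugging these into the defining formulas (\ref{ratfs}) and invoking additivity of $v_{i+1}$ on stable elements produces $v_{i+1}(\phi_{i+1})=(e_{i+1}V_{i+1}+h_{i+1})-V_{i+1}e_{i+1}=h_{i+1}$, $v_{i+1}(\ga_{i+1})=e_{i+1}h_{i+1}-h_{i+1}e_{i+1}=0$, and $v_{i+1}(\pi_{i+2})=\ell_{i+1}h_{i+1}+\ell'_{i+1}e_{i+1}=1$, the last equality being precisely the B\'ezout identity $\ell_{i+1}h_{i+1}+\ell'_{i+1}e_{i+1}=1$.

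For (2), the key observation is that each polynomial $h\in\{N,g_1,\dots,g_r\}$ is $\ty$-robust and satisfies $\gcd_{A_r}(R_r(h),t_r)=1$: for $h=g_r$ the $g_r$-expansion has its single nonzero term at $(1,V_r)$, giving $R_r(g_r)=1$; for $h=g_j$ with $j<r$ (and similarly for $h=N$) the $g_r$-expansion collapses to $h$ itself, and the $\ty$-robustness of $h$ combined with an iterated application of Lemma \ref{gcd} forces $R_r(h)$ to be a unit in $A_r$. These are precisely the robustness and coprimeness facts that the authors tacitly invoke just before (\ref{wrphii}). Proposition \ref{value} therefore yields $w_\p(h(\t))=\rho\,v_r(h)/(e_1\cdots e_r)$ for every generator $h\in\{N,g_1,\dots,g_r\}$, and since $\phi_r,\ga_r,\pi_{r+1}$ are by construction Laurent monomials in these generators, and both $v_r$ (extended to the fraction ring $\ss(v_r)^{-1}\oo[x]$) and $w_\p$ are genuine valuations, the identity extends multiplicatively to these three rational functions. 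Substituting the values $v_r(\phi_r)=h_r$, $v_r(\ga_r)=0$, $v_r(\pi_{r+1})=1$ from (1) finishes the proof. The main technical subtlety I expect lies in the recursive verification of $\ty$-robustness and gcd-coprimeness for the lower-level generators $g_j$ with $j<r$; rather than re-derive it, I would simply appeal to the parallel discussion preceding (\ref{wrphii}), where the same facts are already used.
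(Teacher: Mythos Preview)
Your proof is correct and follows essentially the same approach as the paper: an induction on the level for (1), using the value $v_r(\pi_r)=e_r$ obtained via (\ref{wrphii}) together with $v_r(g_r)=e_r(V_r+\la_r)$ and the recursive formulas (\ref{ratfs}); and for (2), a direct appeal to (\ref{wrphii}) and item (1). Your elaboration of why the generators $N,g_1,\dots,g_r$ satisfy the hypotheses of Proposition \ref{value} is exactly what the paper invokes in the sentence preceding (\ref{wrphii}).
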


\begin{proof}
Let us prove the equalities of (1) simultaneously by induction on $r$. For $r=0$ they amount to $v_0(x)=0$ and $v_0(N)=1$, because $\phi_0=\ga_0=x$ and $\pi_1=N$.
Suppose that (1) holds for all $i<r$. Then by (\ref{wrphii}), we have
\begin{equation}\label{vrpir}
v_r(\pi_r)=e_r\,v_{r-1}(\pi_r)=e_r.
\end{equation}
Hence, by the recurrent definition of the functions,
$$
\begin{array}{l}
v_r(\phi_r)=v_r(g_r)-V_rv_r(\pi_r)=e_r(V_r+\la_r)-V_re_r=e_r\la_r=h_r,\\
v_r(\ga_r)=e_rv_r(\phi_r)-h_rv_r(\pi_r)=0,\\
v_r(\pi_{r+1})=\ell_rv_r(\phi_r)+\ell'_rv_r(\pi_r)=\ell_rh_r+\ell'_re_r=1.
\end{array}
$$

Item (2) follows from (\ref{wrphii}) and item (1).
\end{proof}

\begin{lemma}\label{gammas}
Consider a rational function  $h=N^{n_0}g_1^{n_1}\cdots g_r^{n_r}\in \Q(x)$ with $v_r(h)=0$. Then, there exist integers $a_1,\dots,a_r$ such that $h=\ga_1^{a_1}\cdots\ga_r^{a_r}$.
\end{lemma}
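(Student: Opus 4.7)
The plan is to induct on $r$. The base case $r=0$ reduces to the tautology $v_0(N^{n_0})=n_0=0 \Longrightarrow h=1$, which matches the empty $\gamma$-product.

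For the inductive step, the first move is to extract the $g_r$-exponent from the vanishing of $v_r(h)$. Using (\ref{wrphii}) (applied at each level) together with the multiplicativity of $v_r$ on robust elements, I would compute
\begin{equation*}
0 = v_r(h) = n_0\,e_1\cdots e_r + \sum_{i=1}^{r-1} n_i\,e_{i+1}\cdots e_r\,(e_iV_i+h_i) + n_r(e_rV_r+h_r).
\end{equation*}
Every summand except $n_r h_r$ is a multiple of $e_r$, so $e_r\mid n_rh_r$; since $\gcd(e_r,h_r)=1$, we conclude $n_r = e_r k$ for some $k\in\Z$.

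Second, I would substitute using the definitions (\ref{ratfs}): from $g_r=\phi_r\pi_r^{V_r}$ and $\phi_r^{e_r}=\gamma_r\pi_r^{h_r}$ we get $g_r^{e_r}=\gamma_r\pi_r^{e_rV_r+h_r}$, whence $g_r^{n_r}=\gamma_r^k\pi_r^{k(e_rV_r+h_r)}$. Iterating $\pi_{i+1}=g_i^{\ell_i}\pi_i^{\ell'_i-V_i\ell_i}$ (with $\pi_1=N$) shows $\pi_r$ is a rational monomial in $N,g_1,\dots,g_{r-1}$ with integer exponents, so substitution gives
\begin{equation*}
h\,\gamma_r^{-k} = N^{n'_0}g_1^{n'_1}\cdots g_{r-1}^{n'_{r-1}}
\end{equation*}
for suitable $n'_j\in\Z$. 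The key observation for closing the induction is that (\ref{wrphii}) applied to each generator $N,g_1,\dots,g_{r-1}$ yields $v_r(q)=e_r\,v_{r-1}(q)$ for every $q\in\gen{N,g_1,\dots,g_{r-1}}$. Combined with $v_r(\gamma_r)=0$ (Lemma \ref{wrphir}) and $v_r(h)=0$, this gives $v_{r-1}(h\gamma_r^{-k})=0$, and the inductive hypothesis at level $r-1$ produces integers $a_1,\dots,a_{r-1}$ with $h\gamma_r^{-k}=\gamma_1^{a_1}\cdots\gamma_{r-1}^{a_{r-1}}$. Setting $a_r=k$ completes the proof.

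The main obstacle is the bookkeeping around additivity: one must be sure that $v_r$ restricts to a well-defined additive function on the multiplicative group $\gen{N,g_1,\dots,g_r}\subset\Q(x)^*$ (so that the opening computation of $v_r(h)$ is legitimate) and that this restriction satisfies $v_r=e_r\,v_{r-1}$ on the subgroup generated by $N,g_1,\dots,g_{r-1}$. Both reduce to (\ref{wrphii}) applied levelwise to $\phi_i$ together with $\pi_r$'s monomial expression in $N$ and $g_1,\dots,g_{r-1}$; once these routine checks are in place, the divisibility argument and the single substitution $g_r^{e_r}=\gamma_r\pi_r^{e_rV_r+h_r}$ do all the work.
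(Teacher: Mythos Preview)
Your proof is correct and follows essentially the same approach as the paper's: induct on $r$, use $v_r(h)\equiv 0\pmod{e_r}$ to force $e_r\mid n_r$, peel off a power of $\gamma_r$ via $g_r^{e_r}=\gamma_r\pi_r^{e_rV_r+h_r}$, and apply the inductive hypothesis to the remaining monomial in $N,g_1,\dots,g_{r-1}$. Your discussion of the additivity of $v_r$ on $\langle N,g_1,\dots,g_r\rangle$ and of the passage from $v_r(h\gamma_r^{-k})=0$ to $v_{r-1}(h\gamma_r^{-k})=0$ is more explicit than the paper's, which silently relies on (\ref{wrphii}) and Lemma~\ref{wrphir} for the same purpose.
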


\begin{proof}
By induction on $r$. For $r=0$ the statement is obvious because necessarily $h=1$. For $r>0$, we have
$$
v_r\left(N^{n_0}g_1^{n_1}\cdots g_{r-1}^{n_{r-1}}\right)\equiv0\md{e_r},\qquad v_r(g_r)\equiv h_r\md{e_r}.
$$
Hence, the condition $v_r(h)=0$ implies $n_r=e_r m$ for some $m\in\Z$. Now, $h'=h\ga_r^{-m}=N^{n_0}g_1^{n_1}\cdots g_{r-1}^{n_{r-1}}\pi_r^{m(e_rV_r+h_r)}$ satisfies $h'=N^{\nu_0}g_1^{\nu_1}\cdots g_{r-1}^{\nu_{r-1}}$ for some integers $\nu_0,\dots,\nu_{r-1}$, and  $v_r(h')=v_r(h)-mv_r(\ga_r)=0$, by Lemma \ref{wrphir}. Hence, $h'=\ga_1^{a_1}\cdots\ga_{r-1}^{a_{r-1}}$, by the induction hypothesis.
\end{proof}

\begin{theorem}\label{AFp}\mbox{\null}

{\bf (A)} \ There is an injective homomorphism of $(\Z/p\Z)$-algebras:
$$
\gt\colon \rdp(A)\hookrightarrow\prod\nolimits_{\p\in\pt}\F_\p,\quad x\mapsto \ga_\ty(x)=\left(\ga_{\ty,\p}(x)\right)_{\p\in\pt}
$$
determined by $\ga_{\ty,\p}(\rdp(z_i))=\ga_i(\t)+\p$ for all $0\le i\le r$.\medskip

{\bf (B)} \ For all $\m\in\mpa$, $\p\in\ptm$ we have  $\ga_{\ty,\p}\circ\rdp=\ga_{\ty_\m,\p}\circ\rdm$.

Moreover, if $\phi_{\m,r}$, $\ga_{\m,r}$, $\pi_{\m,r+1}\in\Q_p(x)$ are the rational functions associated with $\ty_\m$ by the recurrent formulas of (\ref{ratfs}), we have
\begin{equation}\label{compratfs}
\begin{array}{rcl}
(\phi_r/\phi_{\m,r})(\t)+\p&=&\ga_{\ty,\p}\left(\xi_r\chi_{r-1}^{-V_r}\right),\\
(\ga_r/\ga_{\m,r})(\t)+\p&=&\ga_{\ty,\p}\left(\sigma_r\right),\\
(\pi_{r+1}/\pi_{\m,r+1}^\rho)(\t)+\p&=&\ga_{\ty,\p}\left(\chi_r\right),
\end{array}
\end{equation}
where $\xi_i,\chi_i,\sigma_i\in\F_{\m,i}\subset \F_\m\subset \rdp(A)$ are defined in Theorem \ref{comparison}.
\medskip

{\bf (C)} \ Let $h\in\Z[x]$. If $r=0$, take $s_0=0$, $u_0=v_0(h)$. If $r>0$, take $(s_0,u_0)=(s_r(h),u_r(h))$, the left endpoint of $S_r(h)$. Then, for each $\p\in\pt$,
\begin{equation}\label{C}
h(\t)\phi_r(\t)^{-s_0}\pi_r(\t)^{-u_0}+\p=\ga_{\ty,\p}\left(\rdp(R_r(h)(z_r))\right).
\end{equation}
\end{theorem}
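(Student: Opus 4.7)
The plan is to establish (A) and (B) simultaneously by induction on the order $r$, and then to deduce (C) from its known analogue for irreducible $p$-types (see, e.g., \cite[\S4]{newapp}) by transporting that identity through the transition formulas of (B) and the comparison of Theorem \ref{comparison}(F).

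For (A), the product decomposition (\ref{local}) gives $\rdp(A)=\prod_{\m\in\mpa}\F_\m$, and Theorem \ref{comparison}(C) identifies the inductive chain $\F_{\m_0}\subset\cdots\subset\F_\m$ with the one attached to the irreducible type $\ty_\m$. The classical construction provides, for each $\m$, an injective $(\Z/p\Z)$-algebra homomorphism $\ga_{\ty_\m,\p}\colon\F_\m\hookrightarrow\prod_{\p\in\ptm}\F_\p$ determined by $z_{\m,i}\mapsto\ga_{\m,i}(\t)+\p$, and the product over $\m$ is the desired $\gt$. Since $\rdm(z_i)=\sigma_iz_{\m,i}$ by Theorem \ref{comparison}(C), checking $\gt_\p(\rdp(z_i))=\ga_i(\t)+\p$ is exactly the middle formula of (\ref{compratfs}), so (A) reduces to (B). I would prove the three formulas of (\ref{compratfs}) by induction on $r$. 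The base case follows directly from $\phi_0=x=\phi_{\m,0}$, $\ga_0=x=\ga_{\m,0}$ and $\pi_1=(N/p^\rho)\,\pi_{\m,1}^\rho$, yielding $\sigma_0=1$ and $\chi_0=\rdp(N/p^\rho)$. In the inductive step, the running hypothesis forces $\gcd(\rho,e_r)=1$, so by Theorem \ref{comparison}(A) we have $e_{\m,r}=e_r$ and $h_{\m,r}=\rho h_r$; unfolding (\ref{ratfs}) for both $\ty$ and $\ty_\m$ writes $\phi_r/\phi_{\m,r}$ as $G_{\m,r}$ times a monomial in $\pi_r,\pi_{\m,r}$, and the inductive hypothesis for $\pi_r/\pi_{\m,r}^\rho$ rewrites this monomial modulo $\p$ in terms of $\chi_{r-1}$. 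Corollary \ref{suGr} supplies the left endpoint of the $\la_{\m,r-1}$-component of $N_{\m,r-1}(G_{\m,r})$, and the classical form of (C) applied to $G_{\m,r}$ under $\ty_\m$ evaluates $G_{\m,r}(\t)+\p$ in terms of $\xi_r$; assembling the pieces gives the first formula of (\ref{compratfs}). The other two follow formally from the first via the B\'ezout identity $\ell_rh_r+\ell'_re_r=1$ and the definitions $\ga_r=\phi_r^{e_r}\pi_r^{-h_r}$, $\pi_{r+1}=\phi_r^{\ell_r}\pi_r^{\ell'_r}$ (and the $\ty_\m$-analogues).

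For (C), I start from the classical analogue for $\ty_\m$: for $\p\in\ptm$,
\begin{equation*}
h(\t)\,\phi_{\m,r}(\t)^{-s_0}\,\pi_{\m,r}(\t)^{-\rho u_0}+\p=\ga_{\ty_\m,\p}\bigl(R_{\m,r}(h)(z_{\m,r})\bigr),
\end{equation*}
where Theorem \ref{comparison}(F)(i) (using $\gcd(\rho,e_r)=1$) identifies the left endpoint of $S_{\m,r}(h)$ as $(s_0,\rho u_0)$. By Theorem \ref{comparison}(F)(ii), $R_{\m,r}(h)(z_{\m,r})=\tau_r(h)\,\rdm\bigl(R_r(h)(z_r)\bigr)$, so the right-hand side equals $\gt_\p(\tau_r(h))\,\gt_\p\bigl(\rdp(R_r(h)(z_r))\bigr)$. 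Writing $\phi_{\m,r}^{-s_0}\pi_{\m,r}^{-\rho u_0}=\phi_r^{-s_0}\pi_r^{-u_0}\,(\phi_r/\phi_{\m,r})^{s_0}(\pi_r/\pi_{\m,r}^\rho)^{u_0}$ and applying (B) introduces the compensating factor $\gt_\p\bigl((\xi_r\chi_{r-1}^{-V_r})^{s_0}\chi_{r-1}^{u_0}\bigr)=\gt_\p(\xi_r^{s_0}\chi_{r-1}^{u_0-s_0V_r})=\gt_\p(\tau_r(h))$ on the left. Cancelling $\gt_\p(\tau_r(h))$ on both sides yields (\ref{C}). The case $r=0$ is a direct verification with $\pi_0=N$, $R_0(h)=\rdn(h/N^{u_0})$ and $\tau_0(h)=\chi_0^{u_0}$.

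The main obstacle is the bookkeeping in (B): the constants $\xi_r,\sigma_r,\chi_r$ defined in Theorem \ref{comparison}(B) must emerge precisely from the recursive construction (\ref{ratfs}) of $\phi_r,\ga_r,\pi_{r+1}$ in a way that is compatible with the identifications of Theorem \ref{comparison}(C) at each level; once this matching is verified, (A) reduces to (B) automatically and (C) falls out as the clean cancellation of the extra factor $\gt_\p(\tau_r(h))$.
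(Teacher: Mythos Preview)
Your treatment of (A) and (B) follows the paper's own strategy almost exactly: define $\gt$ componentwise via the classical $\ga_{\ty_\m}$, reduce (A) to the second identity of (\ref{compratfs}), and prove (\ref{compratfs}) by induction on $r$, evaluating $G_{\m,r}(\t)+\p$ through the irreducible version of (C) together with Corollary \ref{suGr}. The paper carries out precisely this computation (its equation (\ref{Gmr}) is your ``classical form of (C) applied to $G_{\m,r}$''), and then derives the $\ga_r$- and $\pi_{r+1}$-identities from the $\phi_r$-identity and the recursion (\ref{ratfs}), just as you propose. So for (A) and (B) your plan is correct and matches the paper.

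For (C), however, there is a genuine gap. Your argument starts from the classical identity for $\ty_\m$ with exponents $(s_{\m,r}(h),u_{\m,r}(h))$, and then replaces these by $(s_0,\rho u_0)$ and $R_{\m,r}(h)(z_{\m,r})$ by $\tau_r(h)\,\rdm(R_r(h)(z_r))$. Both substitutions invoke Theorem \ref{comparison}(F), which is stated only for $\ty$-robust polynomials. But (C) is asserted for an \emph{arbitrary} $h\in\Z[x]$: if $h$ is not $\ty$-robust, the comparison $N_{\m,r}^-(h)=E_\rho(N_r^-(h))$ may fail (one only has the inequality of (G)), so the left endpoint need not be $(s_0,\rho u_0)$, and (F)(ii) need not hold either. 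Thus your cancellation of $\gt_\p(\tau_r(h))$ is unjustified in general.

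The paper avoids this by proving (C) for the SF-type directly, by induction on $r$: expand $h$ in its $g_r$-expansion, discard the terms with $v_r(a_sg_r^s)>v_r(h)$ using only the inequality of Proposition \ref{value} (valid for all $h$), rewrite each remaining monomial $a_{s_j}g_r^{s_j}\phi_r^{-s_0}\pi_r^{-u_0}$ via the $\Q(x)$-identity cited from \cite[Lem.~3.4]{GMN}, and apply the induction hypothesis (C) at level $r-1$ to each $a_{s_j}$. This never requires $h$ (or the $a_{s_j}$) to be robust. If you want to keep your transport strategy, you would need either to extend Theorem \ref{comparison}(F) to non-robust polynomials (which is not true as stated) or to add a robustness hypothesis on $h$---but then you are proving a weaker statement than the one claimed.
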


\begin{proof}
For an irreducible type, statement (B) is trivial and statements (A), (C) were proved in \cite[Sec. 3.1]{GMN}.

Thus, for $r\ge1$, $\m\in\mpa$ and $\p\in\ptm$, we may apply (C) to the polynomial $G_{\m,r}=g_r/\phi_{\m,r}\in\Z_p[x]$ and the type $\ty_{\m,r-1}:=(\ty_\m)_{r-1}$.

Denote $u=(\rho V_r-V_{\m,r})/e_{r-1}$. For $r>1$, Corollary \ref{suGr} shows that $s_{r-1}(G_{\m,r})=0$ and $u_{r-1}(G_{\m,r})=u$.
For the artinian algebra associated with a $p$-type, the mapping $\rdp$ is the identity and (C) yields:
\begin{equation}\label{Gmr}
\begin{array}{rcl}
G_{\m,r}(\t)\pi_{\m,r-1}(\t)^{-u}+\p&=&\ga_{\ty_{\m,r-1},\p}\left(R_{\m,r-1}(G_{\m,r})(z_{\m,r-1}))\right)\\&=&\;\ga_{\ty_\m,\p}\left(\xi_rz_{\m,r-1}^{\ell_{\m,r-1}u}\right),
\end{array}
\end{equation}
the last equality following from $\ga_{\ty_{\m,r-1}}=(\ga_{\ty_\m})_{\mid \F_{\m,r}}$ and the definition of the constant $\xi_r\in \F_{\m,r}^*$ in Theorem \ref{comparison}.

Note that (\ref{Gmr}) holds for $r=1$ too, because $u=0=v_0(G_{\m,1})$.

Let us prove the theorem by induction on $r$.

For $r=0$ we have $\ty=(t_0)$ and $A=A_0[y]/(t_0)=A_0[z_0]$.
Each monic irreducible factor $\psi$ of $\rdp(t_0)$ in $(\Z/p\Z)[y]$ determines a maximal ideal $\m=(\m_0,\psi)\in\mpa$ with $\F_\m=(\Z/p\Z)[y]/(\psi)$ and $\ty_\m=(\psi)$.

For every $\p\in\ptm$ the polynomial $F_\p$ is congruent to a power of $\psi$ modulo $p$. Hence,  the field  $\F_\m$ is embedded into $\F_\p$ by sending the class of $y$ modulo $\psi$ to $\t+\p=\ga_0(\t)+\p$. Since $\rdp(A)\simeq\prod_{\m\in\mpa}\F_\m$ and the sets $\ptm$ are pairwise disjoint, this proves (A).

Since $\phi_0=\phi_{\m,0}=\ga_0=\ga_{\m,0}=x$ and $\pi_1=N$, $\pi_{\m,1}=p$, statement (B) follows immediately. Although $\chi_{-1}$ is not defined, we agree that  $\chi_{-1}^0=1$.

Finally, the two sides of (\ref{C}) coincide with the element in $\F_\p$ obtained by reducing modulo $p$ the polynomial $h(x)/N^{v_0(h)}\in\Z[x]$ and then replacing $x$ with $\t+\p$. This ends the proof of the theorem in the case $r=0$.

Suppose that $r>0$ and the theorem holds for types of order less than $r$.

As a consequence of Theorem \ref{comparison} and our general assumptions:
$$
e_{\m,i}=e_i,\quad h_{\m,i}=\rho h_i,\quad \ell_{\m,i}=\ell_i,\quad  \ell'_{\m,i}=\ell'_i,\quad 1\le i\le r.
$$

 Let us first prove (B), whose first statement contains an independent definition of $\ga_\ty$. For any $\p\in\pt$ and any $a\in A$, let us \emph{define} $\ga_{\ty,\p}(\rdp(a))$ to be $\ga_{\ty_\m,\p}(\rdm(a))$ for the unique $\m \in \mpa$ for which $\p\in \ptm$.

For the proof of (\ref{compratfs}), we abuse of language and omit evaluation at $\t$ and classes modulo $\p$. From
 $ \phi_r=g_r\pi_r^{-V_r}$ and $\phi_{\m,r}=\phi_{\m,_r}\pi_{\m,r}^{-V_{\m,r}}$,
 we deduce
 $$
 \phi_r/\phi_{\m,r}=G_{\m,r}\left(\pi_r/\pi_{\m,r}^\rho\right)^{-V_r}\pi_{\m,r}^{V_{\m,r}-\rho V_r}.
 $$
 If $r=1$, then $u=0=V_1$, and (\ref{Gmr}) yields $ \phi_r/\phi_{\m,r}=\ga_{\ty,\p}(\xi_1)$, as desired.

 If $r>1$, then (\ref{Gmr}) and the induction hypothesis lead to
 \begin{align*}
\phi_r/\phi_{\m,r}=&\;G_{\m,r}\ga_{\ty,\p}\left(\chi_{r-1}^{-V_r}\right)\pi_{\m,r}^{V_{\m,r}-\rho V_r}\\
=&\;\ga_{\ty,\p}\left(\xi_rz_{\m,r-1}^{\ell_{r-1}u}\right)\ga_{\ty,\p}\left(\chi_{r-1}^{-V_r}\right)\pi_{\m,r-1}^{u}\pi_{\m,r}^{-e_{r-1}u}\\=&\;\ga_{\ty,\p}\left(\xi_r\chi_{r-1}^{-V_r}\right)\left(\ga_{\m,r-1}^{\ell_{r-1}}\pi_{\m,r-1}\pi_{\m,r}^{-e_{r-1}}\right)^{u}=\ga_{\ty,\p}\left(\xi_r\chi_{r-1}^{-V_r}\right),
\end{align*}
because $\ga_{\m,r-1}^{\ell_{r-1}}\pi_{\m,r-1}\pi_{\m,r}^{-e_{r-1}}=1$ in $\Q_p(x)$ by a direct application of (\ref{ratfs}).

This ends the proof of the first identity in (\ref{compratfs}). The other two identities follow easily from the first one and the induction hypothesis.

Let us prove (A). Since $\ga_{\ty_\m}(z_{\m,i})=\ga_{\m,i}(\t)+\p$, for all $0\le i\le r$, the homomorphism $\ga_\ty$ defined in (B) satisfies:
\begin{align*}
\ga_{\ty,\p}(\rdp(z_i))=&\;\ga_{\ty_\m,\p}(\rdm(z_i))=\ga_{\ty_\m,\p}(\sigma_iz_{\m,i})\\=&\;\ga_{\ty_\m,\p}(\sigma_i)\left[\ga_{\m,i}(\t)+\p\right]=
\ga_i(\t)+\p=\ga_i(\t)+\p,
\end{align*}
by Theorem \ref{comparison} and the second identity in (\ref{compratfs}). Hence, the mapping $\ga_\ty$ defined in (A) is well defined and coincides with the mapping $\ga_\ty$ defined in (B). Finally, for any $\m\in\mpa$, the homomorphism $\ga_\ty$ embeds the field $\F_\m$ diagonally into $\prod_{\p\in\ptm}\F_\p$. Since the sets $\ptm$ are pairwise disjoint, $\ga_\ty$ is injective. This ends the proof of (A).

Let us prove (C). By Lemma \ref{wrphir} and (\ref{vrpir}), for any $\p\in\pt$ we have
$$
v_r\left(\phi_r^{s_0}\pi_r^{u_0}\right)=s_0h_r+u_0e_r=v_r(h),\quad
w_\p\left(\phi_r^{s_0}(\t)\pi_r^{u_0}(\t)\right)=\rho \,v_r(h)/(e_1\cdots e_r).
$$

Let $d=\deg R_r(h)$ and denote $s_j=s_0+je_r$ for $0\le j\le d$. Let $h=\sum_{0\le s}a_sg_r^s$ be the canonical $g_r$-expansion of $h$ and consider $h_0=\sum_{j=0}^da_{s_j}g_r^{s_j}$. For any integer abscissa $s\ne s_j$ we have $v_r(a_sg_r^s)>v_r(h)$; hence,
$$
w_\p(a_s(\t)g_r(\t)^s)>\rho\,v_r(h)(e_1\cdots e_r)=w_\p\left(\phi_r^{s_0}(\t)\pi_r^{u_0}(\t)\right)
$$
by Proposition \ref{value}. Therefore,
\begin{equation}\label{init}
h(\t)\phi_r(\t)^{-s_0}\pi_r(\t)^{-u_0}+\p=h_0(\t)\phi_r(\t)^{-s_0}\pi_r(\t)^{-u_0}+\p.
\end{equation}

Now, for any $0\le j\le d$, we have the following identity in $\Q(x)$, which is easy to deduce from the recurrent formulas of (\ref{ratfs}) (cf. \cite[Lem. 3.4]{GMN}):
$$
a_{s_j}\,g_r^{s_j}\phi_r^{-s_0}\pi_r^{-u_0}=\ga_{r-1}^{\nu_{r-1}(a_{s_j})}a_{s_j}\,\phi_{r-1}^{-s_{r-1}(a_{s_j})}\pi_{r-1}^{-u_{r-1}(a_{s_j})}\,\ga_r^j,
$$

If we evaluate the left-hand side at $\t$, take classes modulo $\p$  and sum over $0\le j\le d$, we get the left-hand side of (\ref{C}), thanks to (\ref{init}). If we do the same operation with the right-hand side, we get the right-hand side of (\ref{C}), thanks to the induction hypothesis and the definition of $R_r(h)$.
\end{proof}

\begin{corollary}\label{epfp}
If $\ord_\ty(f)=1$ and $R_r(f)=t_rq$ with $\gcd_{A_r}(q,t_r)=1$, then:
\begin{enumerate}
\item $e_\p=e_1\cdots e_r$ for all $\p\in\pt$, \ and \quad $\sum_{\p\in\pt}f_\p=f_0f_1\cdots f_r$.
\item $\gt$ is an isomorphism.
\end{enumerate}
\end{corollary}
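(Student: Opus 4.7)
The plan is to reduce to the classical Montes theory of irreducible $p$-types via Corollary \ref{squarefree}, read off the invariants $e_\p,f_\p$ from the associated tree $\{\ty_\m\}_{\m\in\mpa}$, and then upgrade the injection $\ga_\ty$ to an isomorphism by comparing $\Z/p\Z$-dimensions.

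First, the standing assumptions force $\gcd(\rho,e_r)=1$ (either $\rho=1$, or $\ty$ is unramified so $e_r=1$), and $f$ is $\ty$-robust by hypothesis. Hence the conditions $\ord_\ty(f)=1$ and $R_r(f)=t_r q$ with $\gcd_{A_r}(q,t_r)=1$ place us in the equality case of Corollary \ref{squarefree}(2), so $\ord_{\ty_\m}(f)=1$ for every $\m\in\mpa$. Expanding $\ord_{\ty_\m}(f)=\sum_{\p\in\pp}\ord_{\ty_\m}(F_\p)$ forces $\pp_{\ty_\m}=\{\p_\m\}$ with $\ord_{\ty_\m}(F_{\p_\m})=1$; the classical theory for irreducible types then gives $e_{\p_\m}=e_{\m,1}\cdots e_{\m,r}$ and $f_{\p_\m}=f_{\m,0}f_{\m,1}\cdots f_{\m,r}$. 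By Theorem \ref{comparison}(A) together with $\gcd(\rho,e_r)=1$, each $e_{\m,i}$ equals $e_i$, yielding $e_\p=e_1\cdots e_r$. For the sum, since $\pt=\bigsqcup_{\m\in\mpa}\{\p_\m\}$, I get $\sum_{\p\in\pt}f_\p=\sum_{\m\in\mpa}\prod_{i=0}^r f_{\m,i}$, which I would evaluate by sweeping the tree $\mm_p$: at each node $\n\in\mx_p(A_i)$, the branches correspond to the monic irreducible factors of $\rd_\n(t_i)$, and since $t_i$ is squarefree and monic of degree $f_i$, these degrees sum to $f_i$. Telescoping from level $r$ down to $0$ then produces $\sum_\m\prod_i f_{\m,i}=\prod_i f_i$.

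For (2), an iterated application of Lemma \ref{augmentation} shows that $A$ is a free $A_0$-module of rank $f_0 f_1\cdots f_r$; since $p\mid N$ and $A_0=\Z/N\Z$, reduction gives $A_0/pA_0\simeq\Z/p\Z$, so $\dim_{\Z/p\Z}\rd_p(A)=\prod f_i$. Part (1) endows the codomain $\prod_{\p\in\pt}\F_\p$ with the same $\Z/p\Z$-dimension $\sum_{\p\in\pt} f_\p=\prod f_i$, and since $\ga_\ty$ is injective by Theorem \ref{AFp}(A), it must be bijective.

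The main obstacle I expect is the input from Step~2 — translating $\ord_{\ty_\m}(F_{\p_\m})=1$ into the statement that $\ty_\m$ determines the local invariants $e_{\p_\m}$ and $f_{\p_\m}$ via $e_{\m,1}\cdots e_{\m,r}$ and the residue field of dimension $f_{\m,0}\cdots f_{\m,r}$; this is essentially the Montes completeness theorem underlying the link between types and primes (of the same flavour as what was used in the proof of Theorem \ref{comparison}(H)). Once this input is in place, the tree-combinatorial identity $\sum_\m\prod f_{\m,i}=\prod f_i$ and the module-rank computation for $\rd_p(A)$ are routine given the inductive definitions of the SF-type and of $A$.
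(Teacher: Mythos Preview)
Your proposal is correct and follows essentially the same route as the paper: invoke Corollary~\ref{squarefree}(2) to get $\ord_{\ty_\m}(f)=1$ (hence $\pp_{\ty_\m}=\{\p_\m\}$), import the local invariants $e_{\p_\m},f_{\p_\m}$ from the classical Montes theory (the paper cites \cite[Cor.~3.8]{GMN} for exactly this), sum the residual degrees by the tree identity (\ref{branches}), and finish by matching $\Z/p\Z$-dimensions against the injectivity of $\ga_\ty$. Your explicit remarks on why $\gcd(\rho,e_r)=1$ and why $\dim_{\Z/p\Z}\rd_p(A)=\prod f_i$ are welcome elaborations of steps the paper leaves implicit.
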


\begin{proof}
By Corollary \ref{squarefree}, $\ord_{\ty_\m}(f)=1$ for all $\m\in\mpa$. This implies that every $\ptm$ is a one-element set, say  $\ptm=\{\p_\m\}$. By \cite[Cor. 3.8]{GMN},
$$
e_{\p_\m}=e_{\m,1}\cdots e_{\m,r}=e_1\cdots e_r,\quad
f_{\p_\m}=f_{\m,0}f_{\m,1}\cdots f_{\m,r}.
$$
By a recurrent application of (\ref{branches}),
$$
\sum_{\p\in\pt}f_\p=\sum_{\m\in\mpa}f_{\m,0}\cdots f_{\m,r}=f_r\sum_{\n\in\mx_p(A_r)}f_{\n,0}\cdots f_{\n,r-1}=f_r\cdots f_0.
$$
In particular, $\dim_{\Z/p\Z}(\rdp(A))=f_0\cdots f_r=\dim_{\Z/p\Z}\left(\oplus_{\p\in\pt}\F_\p\right)$. Since $\ga_\ty$ is injective, it is an isomorphism.
\end{proof}

\section{Computation of integral bases}\label{secGlobalBasis}
We keep the notation and assumptions from the last section.

 \subsection{Reduced $p$-integral bases}\label{subsecRed}
Let $\zp$ be the local ring  of $\Z$ at the prime ideal $p\Z$. Let $\zkp\subset K$ be the integral closure of $\zp$ in the number field $K$. This ring is a free $\zp$-module of rank $n$. A $\zp$-basis of $\zkp$ is called a \emph{$p$-integral basis} of $K$.

Consider the following pseudo-valuation extending $\ord_p$ to $K$:
$$
w:=w_p\colon K\lra e^{-1}\Z\cup\{\infty\},\quad w(\alpha)=\mn\nolimits_{\p\in\pp}\{w_\p(\alpha)\},
$$
where $e=\op{lcm}\left\{e_\p\mid\p\in\pp\right\}$.
The $p$-integral elements $\alpha\in\zkp$ are characterized by the condition $w(\alpha)\ge0$. Clearly $\Q^*\subset \ss_w$ (cf. Definition \ref{qv}).

\begin{definition}
A subset $\bb=\{\alpha_1,\dots,\alpha_m\}\subset K$ is called \emph{$p$-reduced} if for all families $a_1,\dots,a_m\in \zp$, one has:
\begin{equation}\label{reduceness}
w\left(\sum\nolimits_{1\le i\le m}a_i\alpha_i\right)=\mn\{w(a_i\alpha_i)\mid 1\le i\le m\}.
\end{equation}
\end{definition}


\begin{lemma}\cite[Lem. 5.6]{bases}\label{reducedbasis}
A $p$-reduced set $\bb=\{\alpha_1,\dots,\alpha_n\}\subset K$ such that $w(\bb)\subset [0,1)$ is a $p$-integral basis of $K$.
\end{lemma}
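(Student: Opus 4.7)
The plan is to show both linear independence over $\zp$ and that $\bb$ spans $\zkp$, exploiting that the pseudo-valuation $w$ behaves like a true valuation on products of the form $d\,\alpha$ with $d\in\Q$.

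\textbf{Step 1: Linear independence.} First I observe that for $d\in\Q^*$ and $\alpha\in K$ one has $w(d\alpha)=\ord_p(d)+w(\alpha)$, because for every $\p\in\pp$ the rational $d$ satisfies $w_\p(d\alpha)=\ord_p(d)+w_\p(\alpha)$, so the minimum over $\p$ factors the additive constant $\ord_p(d)$ out. Now suppose $\sum a_i\alpha_i=0$ with $a_i\in\zp$ not all zero. Then $\infty=w(\sum a_i\alpha_i)=\min_i w(a_i\alpha_i)$ by the reduced property, forcing $a_i\alpha_i=0$ for all $i$. Since $w(\alpha_i)<1<\infty$, each $\alpha_i\ne 0$, so $a_i=0$, a contradiction. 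Hence $\bb$ is $\zp$-linearly independent, and because $|\bb|=n=[K:\Q]$, it is also a $\Q$-basis of $K$.

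\textbf{Step 2: Spanning $\zkp$.} Take any $\alpha\in\zkp$ and write uniquely $\alpha=\sum c_i\alpha_i$ with $c_i\in\Q$. I must show $\ord_p(c_i)\ge 0$ for all $i$. Let $m=\min_i\ord_p(c_i)\in\Z\cup\{\infty\}$ (ignoring $c_i=0$). Assume for contradiction that $m<0$, and set $d_i=c_i/p^m\in\zp$, so that $\min_i\ord_p(d_i)=0$. Then, using stability of $p^m$ and the reduced property,
\begin{equation*}
w(\alpha)=m+w\!\left(\sum\nolimits_i d_i\alpha_i\right)=m+\min_i\bigl(\ord_p(d_i)+w(\alpha_i)\bigr).
\end{equation*}
Choosing an index $i_0$ with $\ord_p(d_{i_0})=0$ gives $\ord_p(d_{i_0})+w(\alpha_{i_0})=w(\alpha_{i_0})<1$, so the minimum is strictly less than $1$, whence $w(\alpha)<m+1\le 0$. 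This contradicts $\alpha\in\zkp$, i.e.\ $w(\alpha)\ge 0$. Therefore $m\ge 0$ and $\alpha\in\sum\zp\alpha_i$.

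\textbf{Main obstacle.} There is no deep obstacle; the only subtle point is that $w=\min_\p w_\p$ is only a \emph{pseudo}-valuation on $K$, so one cannot freely apply multiplicativity. The key observation that unlocks everything is that rational scalars lie in $\ss_w$, which is why factoring out $p^m$ in Step 2 is legitimate and converts the hypothesis $w(\bb)\subset[0,1)$ into the desired numerical contradiction.
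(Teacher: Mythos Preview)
Your proof is correct. The paper does not actually supply its own proof of this lemma; it merely cites \cite[Lem.~5.6]{bases}. Your argument is the natural one: stability of rational scalars under $w$ (which the paper notes just before the lemma by remarking $\Q^*\subset\ss_w$) lets you pull out the factor $p^m$, and then the reducedness identity together with $w(\bb)\subset[0,1)$ forces the contradiction $w(\alpha)<m+1\le 0$. There is nothing to add.
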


The reduceness criterion of Theorem \ref{criterion} below plays an essential role.

\begin{definition}
For each $\p\in\pp$ let us fix some $\pi_\p\in\Z_\p$ with $w_\p(\pi_\p)=1/e_\p$.
For any value $\delta\in w(K)$, consider the $\zp$-modules: $$K_\delta=\{\alpha\in K\mid w(\alpha)\ge \delta\}\supset K_\delta^+=\{\alpha\in K\mid w(\alpha)>\delta\},$$
and the following homomorphism of $\zp$-modules with kernel $K_\delta^+$:
$$
\rd_\delta\colon K_\delta\lra V=\prod\nolimits_{\p\in\pp}\F_\p,\quad \rd_\delta(\alpha)=\left(\alpha\pi_\p^{-\lfloor e_\p\delta\rfloor}+\p\Z_\p\right)_{\p\in\pp}.
$$
\end{definition}

\begin{theorem}\cite[Lem. 5.7]{bases}\label{criterion}
Let $\bb\subset K$ with  $w(\bb)\subset [0,1)$. Then, $\bb$ is $p$-reduced if and only if $\rd_\delta(\bb_\delta)\subset V$ is a $(\Z/p\Z)$-linearly independent family for all $\delta\in w(\bb)$, where $\bb_\delta=\{\alpha\in \bb\mid w(\alpha)=\delta\}$.
\end{theorem}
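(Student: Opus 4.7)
The plan is to first isolate the elementary features of the map $\rd_\delta$ and then handle the two implications separately. The crucial observation is that $\ker\rd_\delta=K_\delta^+$: the $\p$-component of $\rd_\delta(\alpha)$ vanishes exactly when $w_\p(\alpha)>\lfloor e_\p\delta\rfloor/e_\p$, and because $w_\p(K)\subset(1/e_\p)\Z$ this is equivalent to $w_\p(\alpha)>\delta$. I would also record that $\rd_\delta$ is $\zp$-linear, provided that $\zp$ is made to act on $V=\prod_{\p\in\pp}\F_\p$ via the reduction $\zp\to\Z/p\Z\hookrightarrow\F_\p$; this follows at once from the componentwise definition.

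For the forward direction, I would assume $\bb$ is $p$-reduced and deduce the linear independence of $\rd_\delta(\bb_\delta)$ by contradiction. A nontrivial relation $\sum_{i\in I_\delta}c_i\,\rd_\delta(\alpha_i)=0$ can be lifted to integers $a_i\in\Z$, not all divisible by $p$, and extended by zero on $\bb\setminus\bb_\delta$. Linearity forces $\sum_i a_i\alpha_i\in K_\delta^+$, hence $w(\sum_i a_i\alpha_i)>\delta$. On the other hand, for $\alpha_i\in\bb_\delta$ one has $w(a_i\alpha_i)=\ord_p(a_i)+\delta\ge\delta$, with equality whenever $p\nmid a_i$. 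Thus the minimum $\min_\alpha w(a_\alpha\alpha)$ equals $\delta$, and reduceness forces $w(\sum_\alpha a_\alpha\alpha)=\delta$, contradicting the strict inequality.

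For the reverse direction, I would start from an arbitrary combination $\gamma=\sum_i a_i\alpha_i$ with $a_i\in\zp$, set $\mu=\min\{w(a_i\alpha_i):a_i\neq 0\}$, and suppose for contradiction that $w(\gamma)>\mu$. Writing $\mu=q+\delta$ with $q\in\Z_{\ge 0}$ and $\delta\in[0,1)$, every index $i$ realizing the minimum satisfies $\ord_p(a_i)+w(\alpha_i)=\mu$; since $w(\alpha_i)\in[0,1)$ and $\mu-w(\alpha_i)\in\Z$, this forces $w(\alpha_i)=\delta$ and $\ord_p(a_i)=q$. Writing $a_i=p^q b_i$ with $p\nmid b_i$ for each such $i$ and setting $\gamma':=\sum_{i\in I}b_i\alpha_i$, the remaining terms contribute strictly more than $\mu$ to $w$, hence $w(p^q\gamma')>\mu$, i.e.\ $\gamma'\in K_\delta^+$. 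Applying $\rd_\delta$ and using linearity yields $\sum_{i\in I}\bar b_i\,\rd_\delta(\alpha_i)=0$ with every $\bar b_i\in(\Z/p\Z)^\times$, contradicting the hypothesis that $\rd_\delta(\bb_\delta)$ is $(\Z/p\Z)$-linearly independent.

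The main obstacle I expect is the discreteness step in the reverse direction: the hypothesis $w(\bb)\subset[0,1)$ is essential in order to guarantee that the minimum $\mu$ can only be realized on a single stratum $\bb_\delta$ and that a single common power of $p$ can be factored out from all the minimizing coefficients. Without this, the indices achieving $\mu$ could be spread over several strata with different fractional parts, and one could no longer reduce the failure of reduceness to a single linear relation inside $V$. Once this separation is established, the rest of the argument is a straightforward translation between the reduceness identity and the linear independence statement via $\rd_\delta$.
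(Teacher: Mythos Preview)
The paper does not actually prove this statement: it is quoted verbatim from \cite[Lem.~5.7]{bases} and no argument is given here. So there is no ``paper's own proof'' to compare against.

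That said, your argument is correct and is the standard one. Your identification $\ker\rd_\delta=K_\delta^+$ is right (the key being that $w_\p$ takes values in $(1/e_\p)\Z$, so $w_\p(\alpha)>\lfloor e_\p\delta\rfloor/e_\p$ is equivalent to $w_\p(\alpha)>\delta$), and both implications go through as you describe. In the reverse direction, the point you flag as the ``main obstacle'' is exactly the place where the hypothesis $w(\bb)\subset[0,1)$ is used: it forces every index realizing the minimum $\mu$ to lie in the single stratum $\bb_\delta$ with $\delta$ the fractional part of $\mu$, and to carry a coefficient of $p$-adic valuation exactly $q=\lfloor\mu\rfloor$, so that a common $p^q$ can be factored out. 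One small remark: when you pass from $w(\gamma)>\mu$ and $w(\sum_{i\notin I}a_i\alpha_i)>\mu$ to $w(p^q\gamma')>\mu$, you are using that a finite minimum of values strictly greater than $\mu$ is itself strictly greater than $\mu$; this is fine but worth making explicit.
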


Clearly, the following diagram commutes:
$$
\begin{array}{ccc}
K_\delta\times K_\epsilon&\stackrel{\rd_\delta\times \rd_\epsilon}\lra&V\times V\\
\downarrow&&\downarrow\\
K_{\delta+\epsilon}&\stackrel{\rd_{\delta+\epsilon}}\lra&V
\end{array}
$$
where the vertical mappings are ordinary multiplication.

\subsection{Quotients of $g_r$-expansions}\label{subsecQuots}

\begin{definition}
Let $f=a_0+a_1g_r+\cdots+a_mg_r^m$ be the canonical $g_r$-expansion of $f$.
The \emph{$g_r$-quotients of $f$} are the quotients  $q_1,\dots,q_m\in \Z[x]$ of the divisions with remainder involved in the computation of the coefficients $a_s$:
$$
f=g_r\,q_1+a_0,\quad q_1=g_r\,q_2+a_1,\quad \cdots,\quad q_m=g_r\cdot 0+a_m=a_m.
$$
\end{definition}

Clearly, for any $1\le s\le m$, the canonical $g_r$-expansion of $q_s$ is:
\begin{equation}\label{residue}
 q_s=a_s+a_{s+1}g_r+\cdots+a_mg_r^{m-s}.
\end{equation}
In particular, if $f$ is  $\ty$-robust, all its $g_r$-quotients are $\ty$-robust.


\begin{lemma}\label{Rquot}
Let $R_r(f)=c_{s_0}+c_{s_1}y+\cdots+c_{s_d}y^d\in A_r[y]$, where $s_0\le s_d$ are the abscissas of the endpoints of $S_r(f)$ and $s_j=s_0+je_r$ for $0\le j\le d$.

Take $\ell=\mn\{j\mid s_j \ge s, \ c_{s_j}\ne0 \}$. Then,
$$R_r(q_s)=c_{s_\ell}+c_{s_{\ell+1}}y+\cdots+c_{s_d}y^{d-\ell}.$$
\end{lemma}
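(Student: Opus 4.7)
The approach is to interpret $q_s$ as having a canonical $g_r$-expansion whose coefficients are a ``shift'' of those of $f$, and then to track how this shift affects the cloud of points, the $\la_r$-component, and the residual coefficients. By (\ref{residue}), the canonical $g_r$-expansion of $q_s$ has coefficient $a_{s+i}$ at position $i$. Since $v_{r-1}(g_r)=V_r$, the point attached to the $i$-th term of $q_s$ is $(i,\,u_{s+i}-sV_r)$, where $(j,u_j)$ denotes the point attached to the $j$-th term of $f$. So the cloud of $q_s$ is obtained from the cloud of $f$ by restricting to abscissas $\ge s$ and translating by $(-s,-sV_r)$. This translation preserves slopes, which is the key geometric observation.

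Next I would identify the line $L_{q_s}$ of slope $-\la_r$ first touching $N_r^-(q_s)$ from below, and show it is the translate by $(-s,-sV_r)$ of the line $L$ containing $S_r(f)$. The full cloud of $f$ lies on or above $L$, hence so does the restricted cloud. On the other hand, the existence of $\ell$ in the statement amounts precisely to the restricted cloud meeting $L$ (namely at $(s_\ell,u_{s_\ell})$, since $c_{s_\ell}\ne 0$). Therefore $L_{q_s}$ really is the translate of $L$ and not a strictly higher parallel line. It follows that $S_r(q_s)=N_r^-(q_s)\cap L_{q_s}$ is the translate of the portion of $S_r(f)$ running from $(s_\ell,u_{s_\ell})$ to $(s_d,u_{s_d})$; its endpoints have abscissas $s_\ell-s$ and $s_d-s$ (using $\ell\le d$ and $c_{s_d}\ne 0$), so $\deg R_r(q_s)=(s_d-s_\ell)/e_r=d-\ell$.

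Finally, I would match residual coefficients. For $0\le k\le d-\ell$, the coefficient of $y^k$ in $R_r(q_s)$ comes from the integer abscissa $i_k=s_\ell-s+ke_r$ in $q_s$, which corresponds to $s+i_k=s_{\ell+k}$ in $f$. By definition, this coefficient vanishes iff $(i_k,u_{s+i_k}-sV_r)$ lies strictly above $L_{q_s}$, which, via the translation, is equivalent to $(s_{\ell+k},u_{s_{\ell+k}})$ lying strictly above $L$, i.e.\ to $c_{s_{\ell+k}}=0$; and when nonzero, the defining formula depends only on $\nu_{r-1}(a_{s+i_k})$ and $R_{r-1}(a_{s+i_k})$, which coincide with those of $a_{s_{\ell+k}}$. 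Hence the coefficient equals $c_{s_{\ell+k}}$, proving $R_r(q_s)=c_{s_\ell}+c_{s_{\ell+1}}y+\cdots+c_{s_d}y^{d-\ell}$.

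The main (modest) obstacle is the middle step: rigorously justifying that $L_{q_s}$ is the translate of $L$ rather than a strictly higher parallel line. This rests on the non-emptiness condition encoded in the definition of $\ell$, together with the trivial observation that the full polygon $N_r^-(f)$, and hence its restriction, lies on or above $L$. Once this is settled, matching the coefficients is a direct unwinding of the defining formulas via the translation $(-s,-sV_r)$.
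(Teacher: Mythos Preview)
Your proof is correct and follows essentially the same idea as the paper's. The only cosmetic difference is that the paper compares $f$ with $q_sg_r^s$ directly (their $g_r$-expansions agree at all abscissas $\ge s$, so their clouds and residual coefficients coincide in the region $[s,\infty)\times\R$, and one finishes by noting $R_r(q_s)=R_r(q_sg_r^s)$), whereas you compare $f$ with $q_s$ via the translation $(-s,-sV_r)$; these are equivalent bookkeepings of the same observation.
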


\begin{proof}
By (\ref{residue}), the $g_r$-expansions of $q_sg_r^s$ and $f$ coincide except for the first $s$ coefficients. Hence, in the region $[s,\infty)\times \R$ both polynomials provide the same cloud of points $(s,v_{r-1}(a_sg_r^s))$. Thus, the Newton polygons $N_r(f)$ and $N_r(q_sg_r^s)$ coincide in the region $[s_\ell,\infty)\times \R$ (cf. Figure \ref{figSplit}).

In particular, the $\la_r$-component $S_r(q_sg_r^s)$ is contained in $S_r(f)$ and has endpoints with abscissas $s_\ell\le s_d$. Also, the residual coefficients of $f$ and $q_sg_r^s$ coincide for all integer abscissas $s_j\ge s_\ell$. Since $R_r(q_s)=R_r(q_sg_r^s)$, this ends the proof of the lemma.
\end{proof}

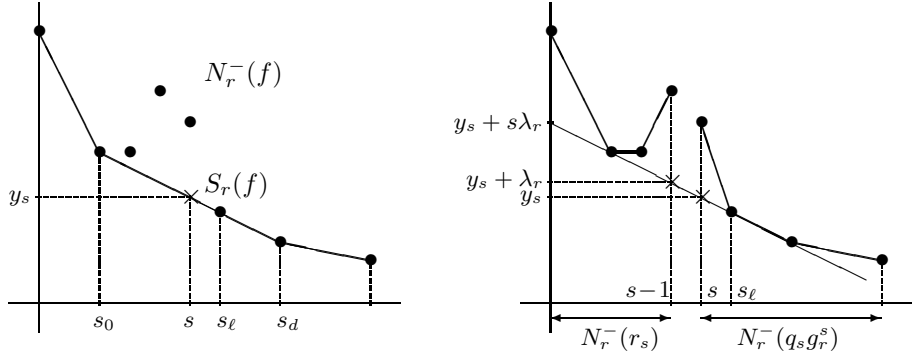
\begin{figure}\caption{Newton polygons of $f$, $q_sg_r^s$ and $r_s=f-q_sg_r^s$}\label{figSplit}
\setlength{\unitlength}{4.mm}
\begin{picture}(28,12)
\put(-.2,9.8){$\bullet$}\put(1.8,5.8){$\bullet$}
\put(5.8,3.8){$\bullet$}\put(7.8,2.8){$\bullet$}\put(10.8,2.2){$\bullet$}
\put(3.8,7.8){$\bullet$}\put(2.8,5.8){$\bullet$}\put(4.8,6.8){$\bullet$}
\put(0,0){\line(0,1){11}}\put(-1,1){\line(1,0){13}}
\put(2,6.03){\line(-1,2){2}}\put(2,6){\line(-1,2){2}}
\put(8,3){\line(-2,1){6}}\put(8,3.03){\line(-2,1){6}}
\put(11,2.4){\line(-5,1){3}}\put(11,2.43){\line(-5,1){3}}
\put(4.7,4.3){$\times$}
\put(5.5,8.4){\begin{small}$N_{r}^-(f)$\end{small}}
\put(5.5,4.7){\begin{small}$S_r(f)$\end{small}}
\multiput(2,.9)(0,.25){20}{\vrule height2pt}
\multiput(6,.9)(0,.25){13}{\vrule height2pt}
\multiput(8,.9)(0,.25){9}{\vrule height2pt}
\multiput(5,.9)(0,.25){15}{\vrule height2pt}
\multiput(11,.9)(0,.25){7}{\vrule height2pt}
\multiput(-.1,4.5)(.25,0){21}{\hbox to 2pt{\hrulefill }}
\put(-1,4.4){\begin{footnotesize}$y_s$\end{footnotesize}}
\put(1.8,.2){\begin{footnotesize}$s_0$\end{footnotesize}}
\put(4.8,.2){\begin{footnotesize}$s$\end{footnotesize}}
\put(5.9,.2){\begin{footnotesize}$s_\ell$\end{footnotesize}}
\put(7.9,.2){\begin{footnotesize}$s_d$\end{footnotesize}}

\put(16.8,9.8){$\bullet$}\put(18.8,5.8){$\bullet$}\put(22.8,3.8){$\bullet$}
\put(20.8,7.8){$\bullet$}\put(19.8,5.8){$\bullet$}\put(21.8,6.8){$\bullet$}
\put(24.8,2.8){$\bullet$}\put(27.8,2.2){$\bullet$}
\put(17,0){\line(0,1){11}}\put(16,1){\line(1,0){13}}
\put(19,6.03){\line(-1,2){2}}\put(19,6){\line(-1,2){2}}
\put(28,2.4){\line(-5,1){3}}\put(28,2.43){\line(-5,1){3}}
\put(19,6){\line(1,0){1}}\put(19,6.03){\line(1,0){1}}
\put(20,6){\line(1,2){1}}\put(20,6.03){\line(1,2){1}}
\put(23,4){\line(-1,3){1}}\put(23,4.03){\line(-1,3){1}}
\put(23,4){\line(2,-1){2}}\put(23,4.03){\line(2,-1){2}}
\put(20.7,4.8){$\times$}\put(21.7,4.3){$\times$}
\multiput(21,.9)(0,.25){28}{\vrule height2pt}
\multiput(22,.9)(0,.25){25}{\vrule height2pt}
\multiput(28,.9)(0,.25){7}{\vrule height2pt}
\multiput(23,.9)(0,.25){13}{\vrule height2pt}
\put(19.5,1.2){\begin{footnotesize}$s\!-\!1$\end{footnotesize}}
\put(22.2,1.2){\begin{footnotesize}$s$\end{footnotesize}}
\put(23.2,1.2){\begin{footnotesize}$s_\ell$\end{footnotesize}}
\multiput(16.9,4.5)(.25,0){21}{\hbox to 2pt{\hrulefill }}
\multiput(16.9,5)(.25,0){17}{\hbox to 2pt{\hrulefill }}
\put(16,4.4){\begin{footnotesize}$y_s$\end{footnotesize}}
\put(14.2,4.9){\begin{footnotesize}$y_s+\la_{r}$\end{footnotesize}}
\put(19,.5){\vector(-1,0){2}}\put(19,.5){\vector(1,0){2}}
\put(24,.5){\vector(-1,0){2}}\put(24,.5){\vector(1,0){4}}
\put(18,-.4){\begin{footnotesize}$N_{r}^-(r_s)$\end{footnotesize}}
\put(23.2,-.4){\begin{footnotesize}$N_{r}^-(q_sg_{r}^s)$\end{footnotesize}}
\put(17,7){\line(2,-1){10.5}}
\put(13.8,6.8){\begin{footnotesize}$y_s+s\la_{r}$\end{footnotesize}}
\put(16.9,7){\line(1,0){.2}}
\end{picture}
\end{figure}

\begin{theorem}\label{denquot}
Suppose that $\ty_{r-1}\mid f$ and for each slope $-\la$ of $N_r^-(f)$ the least positive denominator $e_\la$ of $\la$ satisfies  $\gcd(\rho,e_\la)=1$. Also, suppose that Algorithm \ref{sfdA} outputs a squarefree decomposition of $R_{v_{r-1},g_r,\la}(f)$ in $A_r[y]$ with strongly unitary squarefree factors.

Suppose that $-\la_r$ is one of the slopes of $N_r^-(f)$, and let $s_0< s_d$ be the abscissas of the endpoints of $S_r(f)$. For any integer
$s_0< s\le s_d$, let $q_s$ be the $s$-th $g_{r}$-quotient of $f$, and denote $H_s=(y_s-sV_{r})/(e_0\cdots e_{r-1})$, where $y_s\in\Q$ is determined by the condition $(s,y_s)\in S_r(f)$. Then,
$$
w(q_s(\t))\ge \rho\, v_r(q_s)/(e_0\cdots e_r)=\rho\,H_s.
$$
\end{theorem}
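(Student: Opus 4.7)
The theorem contains two assertions: the equality $v_r(q_s)/(e_0\cdots e_r)=H_s$ and the inequality $w(q_s(\t))\ge\rho H_s$. The equality is a Newton polygon computation. From the canonical expansion $q_s=\sum_{j\ge0}a_{s+j}g_r^j$ of (\ref{residue}) and the identity $v_{r-1}(a_{s+j}g_r^j)=u_{s+j}-sV_r$, formula (\ref{posteriori}) gives
$$v_r(q_s)/e_r=-sV_r+\min_{k\ge s}(u_k+(k-s)\la_r).$$
Since $(s,y_s)\in S_r(f)$ lies on the line $L$ of slope $-\la_r$ at height $v_r(f)/e_r$, and every point $(k,u_k)$ lies on or above $L$ (as the polygon $N_r^-(f)$ does), the inner minimum equals $y_s$, yielding $v_r(q_s)=e_r(y_s-sV_r)$.

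For the inequality I show $w_\p(q_s(\t))\ge\rho H_s$ for each $\p\in\pp$. I rely on two bounds, from the direct expansion of $q_s$ and from the identity $q_s(\t)=-g_r(\t)^{-s}\sum_{i<s}a_i(\t)g_r(\t)^i$ (a consequence of $f(\t)=0$):
$$(\mathrm{A})\ w_\p(q_s(\t))\ge\min_{j\ge0}[w_\p(a_{s+j}(\t))+jw_\p(g_r(\t))],$$
$$(\mathrm{B})\ w_\p(q_s(\t))\ge\min_{i<s}[w_\p(a_i(\t))+(i-s)w_\p(g_r(\t))].$$
The $\ty$-robustness of $f$ gives that every $a_k$ is $\ty_{r-1}$-robust with $\gcd_{A_{r-1}}(R_{r-1}(a_k),t_{r-1})=1$.

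For $\p\in\pp_{\ty_{r-1}}$ attached to a slope $\mu$ of $N_r^-(f)$ via Lemma \ref{Pslope} (including $\p\in\pt$ when $\mu=\la_r$), I apply Proposition \ref{value} to the SF-type $(\ty_{r-1};(g_r,\mu,t'))$: this gives $w_\p(a_k(\t))=\rho(u_k-kV_r)/(e_1\cdots e_{r-1})$ (with equality by the gcd condition) and $w_\p(g_r(\t))=\rho(V_r+\mu)/(e_1\cdots e_{r-1})$ (the $g_r$-expansion of $g_r$ is trivial, so $R_r(g_r)=1$ is coprime to $t'$). Substituting, the desired bound $\ge\rho H_s$ reduces to the polygon-geometric statements $u_{s+j}+j\mu\ge y_s$ in (A) or $u_i\ge y_s+(s-i)\mu$ for $i<s$ in (B). When $\mu\ge\la_r$, (A) follows from $u_{s+j}+j\la_r\ge y_s$ together with $j\mu\ge j\la_r$; when $\mu\le\la_r$, (B) follows from $u_i\ge y_s+(s-i)\la_r\ge y_s+(s-i)\mu$. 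Together these cover every $\p\in\pp_{\ty_{r-1}}$.

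The remaining primes $\p\notin\pp_{\ty_{r-1}}$ will be the main obstacle. Let $\ell<r$ be minimal with $\p\notin\pp_{\ty_\ell}$ and let $-\nu$ be the slope of $N_\ell^-(f)$ attached to $\p$. Lemma \ref{allps} supplies the explicit value $w_\p(g_r(\t))=\rho(m_r/m_\ell)(V_\ell+\delta)/(e_1\cdots e_{\ell-1})$ with $\delta=\min(\la_\ell,\nu)$, while Proposition \ref{value} at level $\ell-1$ bounds $w_\p(a_k(\t))\ge\rho v_{\ell-1}(a_k)/(e_1\cdots e_{\ell-1})$. I would translate the first formula into an ``effective slope'' for $g_r$ at $\p$ via identity (\ref{recurrence}) for $V_r$ and then run the same A/B dichotomy as above, using the tower structure to compare $N_\ell^-(f)$ with $N_r^-(f)$ and to show that one of the two estimates still dominates $\rho H_s$.
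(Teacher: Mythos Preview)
Your treatment of the equality $v_r(q_s)=e_r(y_s-sV_r)$ and of the primes $\p\in\pp_{\ty_{r-1}}$ is correct and is essentially the paper's argument, only packaged term-by-term: where you bound $\sum_{j\ge0}a_{s+j}(\t)g_r(\t)^j$ and $\sum_{i<s}a_i(\t)g_r(\t)^{i-s}$ coefficient-wise, the paper applies Proposition~\ref{value} directly to the polynomials $q_sg_r^s$ (for $\mu\ge\la_r$) and $r_s=f-q_sg_r^s$ (for $\mu<\la_r$), reading off the relevant ordinate from Figure~\ref{figSplit}. The two computations are interchangeable. (Incidentally, you do not need the equality for $w_\p(a_k(\t))$; the inequality from Proposition~\ref{value} already suffices.)

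The genuine gap is the case $\p\notin\pp_{\ty_{r-1}}$, which you only sketch. Your plan to ``run the same A/B dichotomy'' does not go through as stated: at level $\ell<r$ the only available bound is $w_\p(a_i(\t))\ge\rho\,v_{\ell-1}(a_i)/(e_1\cdots e_{\ell-1})$, and there is no simple relation between $v_{\ell-1}(a_i)$ and $v_{r-1}(a_i)=u_i-iV_r$, so the reduction to a polygon inequality in $N_r^-(f)$ breaks down. The paper handles this by first expanding each $a_i$ multiadically in $g_\ell,\dots,g_{r-1}$, so that every term is a coefficient $b_\j$ of degree $<m_\ell$ times a monomial $G^\j=g_\ell^{j_\ell}\cdots g_{r-1}^{j_{r-1}}$. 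For $b_\j$ the level-$(\ell-1)$ bound is sharp, and for each $g_k$ Lemma~\ref{allps} gives the exact value of $w_\p(g_k(\t))$; the argument then closes via the explicit inequality $\epsilon_{r-1}(G^\j)<\epsilon_r(g_r)$, where $\epsilon_k(h)=\rho\,v_k(h)/(e_0\cdots e_k)-w_\p(h(\t))$ measures the defect. This is the technical heart of the proof, and only branch (B) is used here. Your outline points at the right ingredients (Lemma~\ref{allps}, identity~(\ref{recurrence})), but the multiadic decomposition and the $\epsilon$-comparison are the missing ideas without which the estimate cannot be completed.
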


\begin{proof}
From $S_r(q_sg_r^s)\subset S_r(f)$ we deduce $v_r(q_sg_r^s)=v_r(f)=e_r(y_s+s\la_r)$, as Figure \ref{figSplit} shows. Since $v_r(g_r)=e_r(V_r+\la_r)$, we get
$$
v_r(q_s)=e_r(y_s-sV_r)=e_0\cdots e_r\,H_s.
$$


We want to check that $w_\p(q_s(\t))\ge\rho\,H_s$ for all $\p\in\pp$. The proof mimics that of \cite[Thm. 3.3]{bases}. Denote $e=e_0\cdots e_{r-1}$.\medskip

\noindent{\bf Case $\p\in\pp_{\ty_{r-1}}$.}
By Proposition \ref{splitting}, $\p\in \pp_{\ty_{\mu,t}}$ for some slope $-\mu$ of $N_r(f)$ and squarefree factor $t$ of $R_{v_{r-1},g_r,\mu}(f)$.

Let $v_{\mu,r}$ be the $r$-th pseudo-valuation of $\ty_{\mu,t}$.
For any polynomial $h\in \Z[x]$, $v_{\mu,r}(h)/e_\mu$ is the ordinate of the intersection point of the vertical axis with the line of slope $-\mu$ first touching $N_r(h)$ from below.  Hence, a look at Figure \ref{figSplit} justifies the following arguments.

If $\mu\ge\la_r$, Proposition \ref{value} applied to $q_sg_r^s$ yields:
$$
w_\p(q_s(\t)g_r(\t)^s)\ge \rho\, v_{\mu,r}(q_sg_r^s)/(ee_\mu)\ge \rho(y_s+s\mu)/e,
$$
so that $w_\p(q_s(\t))\ge (\rho(y_s+s\mu)/e) - sw_\p(g_r(\t))=\rho\,H_s$, by Corollary \ref{previous}.

If $\mu<\la_r$, Proposition \ref{value} applied to $r_s=f-q_sg_r^s$ yields:
$$
w_\p(r_s(\t))\ge \rho\, v_{\mu,r}(r_s)/(ee_\mu)\ge \rho\,(y_s+\la_r+(s-1)\mu)/e>\rho\,(y_s+s\mu)/e.
$$

Since $q_s(\t)g_r(\t)^s=-r_s(\t)$, we deduce $w_\p(q_s(\t))> \rho H_s$ in this case.\medskip

\noindent{\bf Case $\p\not\in\pp_{\ty_{r-1}}$. }Take the minimal index $0\le \ell< r$ for which $\p\not\in\pp_{\ty_{\ell}}$.

Since $-q_s(\t)=r_s(\t)g_r(\t)^{-s}=\sum_{0\le i<s}a_i(\t)g_r(\t)^{i-s}$, it suffices to show:
\begin{equation}\label{subaim}
w_\p(a_i(\t))-w_\p\left(g_r(\t)^{s-i}\right)> \rho\,H_s,\quad 0\le i<s.
\end{equation}

Take $g_0=x$. For any $0\le i<s$, consider the multiadic expansion in $\Z[x]$:
$$
a_i=\sum\nolimits_{\j\in J} b_\j\, G^\j,\quad \deg(b_\j)<\deg(g_\ell),\quad G^\j=g_{\ell}^{j_\ell}\cdots g_{r-1}^{j_{r-1}},
$$
where $J=\left\{(j_\ell,\dots,j_{r-1})\mid 0\le j_k<e_kf_k \mbox{ for all }\ell\le k<r\right\}$.
An iterative application of (\ref{posteriori}) yields $v_{r-1}(a_i)=\mn\{v_{r-1}\left(b_\j\, G^\j\right)\mid \j\in J\}$.

There is an index $\j\in J$ such that $w_\p(a_i(\t)\ge w_\p(b_\j(\t)G(\t)^\j)$. If $\ell>0$, we have $\p\in\pp_{\ty_{\ell-1}}$, and Proposition \ref{value} shows that
\begin{align*}
w_\p(b_\j(\t))\ge&\; \rho\,v_{\ell-1}(b_\j)/(e_0\cdots e_{\ell-1})=
\rho\,v_{r-1}(b_\j)/e\\=&\;\rho(v_{r-1}(b_\j\,G^\j)-\rho\,v_{r-1}(G^\j))/e\ge\rho\,v_{r-1}(a_i)/e-\rho\,v_{r-1}(G^\j)/e.
\end{align*}
If $\ell=0$, $w_\p(b_\j(\t))\ge \rho\,v_0(b_\j)=\rho\,v_{r-1}(b_\j)/e$, and the same inequalities hold.

Finally, by the convexity of the Newton polygon:
$$
v_{r-1}(a_ig_r^i)\ge y_i\ge y_s+(s-i)\la_r.
$$
Hence, $v_{r-1}(a_i)\ge y_s-sV_r+(s-i)(V_r+\la_r)=y_s-sV_r+v_r(g_r^{s-i})/e_r$.

For any polynomial $h\in\Z[x]$ and any integer $k\ge0$, denote
$$\epsilon_k(h):=\rho\,v_k(h)/(e_0\cdots e_k)-w_\p(h(\t)).
$$The above inequalities yield:
\begin{equation}\label{subsubaim}
w_\p(a_i)-w_\p(g_r(\t)^{s-i})\ge \rho\,H_s-\epsilon_{r-1}(G^\j)+\epsilon_r(g_r^{s-i}).
\end{equation}


For any $\ell\le k<r$, we have $v_{r-1}(g_k)/e=(V_k+\la_k)/(e_0\cdots e_{k-1})$. Take $\delta=\mu$ if $\ell<k$, and $\delta=\mn\left\{\la_\ell,\mu\right\}$ if $\ell=k$. By using (\ref{recurrence}) and the explicit formulas of Lemma \ref{allps}, we obtain:
\begin{align*}
\rho^{-1}\epsilon_{r-1}(g_k)=&\;\dfrac{m_k}{m_\ell}\dfrac{\la_\ell-\delta}{e_0\cdots e_{\ell-1}}+\sum_{\ell< u\le k}\dfrac{m_k}{m_u}\dfrac{h_u}{e_0\cdots e_u}\\
\le&\;\dfrac{m_k}{m_\ell}\dfrac{\la_\ell-\mn\left\{\la_\ell,\mu\right\}}{e_0\cdots e_{\ell-1}}+\sum_{\ell< u\le k}\dfrac{m_k}{m_u}\dfrac{h_u}{e_0\cdots e_u}.
\end{align*}
$$
\rho^{-1}\epsilon_{r}(g_r)=\dfrac{m_r}{m_\ell}\dfrac{\la_\ell-\mn\left\{\la_\ell,\mu\right\}}{e_0\cdots e_{\ell-1}}+\sum_{\ell< u\le r}\dfrac{m_r}{m_u}\dfrac{h_u}{e_0\cdots e_u}>0.
$$
We may deduce $\epsilon_{r-1}(G^\j)< \epsilon_r(g_r)$ from the obvious inequality:
$$
j_um_u+j_{u+1}m_{u+1}+\cdots+j_{r-1}m_{r-1}<m_r,\quad \ell\le u<r.
$$
This proves (\ref{subaim}), having in mind (\ref{subsubaim}) and $\epsilon_{r}(g_r)>0$.
\end{proof}

\subsection{Computation of reduced local integral bases}\label{subsecLocalBases}

Suppose that, for the input $(N,f)$, the SF-OM algorithm of section \ref{secOM} does not detect a proper factor of $N$ and outputs a tree $\tcal(f)$ of SF-types.

\begin{definition}\label{terminalS}
Let $S$ be a side of a Newton polygon derived, along the execution of the SF-OM algorithm, from a type $\ty_{r-1}$ of order $r-1$, with representative $g_r$. Let $-\la_S$ be the slope of $S$ and let $R_{v_{r-1},g_r,\la_S}(f)=c\,T_1^{\ell_1}\cdots T_k^{\ell_k}$ be the squarefree factorization of the residual polynomial attached to $S$.

We say that $S$ is a \emph{terminal side of order $r$} if $\ell_1=1$. In this case, $S$ determines a leaf of the tree $\tcal(f)$:
$$
\ty_S=(\ty_{r-1};(g_r,\la_r,t_r)),\qquad \la_r:=\la_S,\quad t_r:=T_1.
$$

Let $\st$ be the set of all terminal sides provided by the OM algorithm.
\end{definition}

Let $S\in\st$ be a terminal side of order $r$.
For each $1\le i\le r$, let $d_i=\deg R_i(f)$ and denote by $s_{d_i}$ the abscissa of the right endpoint of $S_i(f)$. Moreover, for each $0\le j<s_{d_i}$ we
introduce the following notation:
$$
q_{i,j}=(s_{d_i}-j)-\mbox{th  $g_i$-quotient of $f$},\qquad
H_{i,j}=v_i\left(q_{i,j}\right)/e_0\cdots e_i.
$$


Let $J_S=\left\{(j_0,\dots,j_r)\in\N^{r+1}\mid 0\le j_i<e_if_i, \ 0\le i\le r\right\}$,
and define:
$$
\bb_S=\left\{q_\j\mid \j\in J_S\right\},\qquad q_\j=\t^{j_0} q_{1,j_1}(\t)\cdots q_{r,j_r}(\t)N^{-\lfloor H_{1,j_1}+\cdots +H_{r,j_r} \rfloor}.
$$

Also, we consider some more integral elements derived from the eventual leaf of $\tcal(f)$ of order zero.
If there is a squarefree factor $t\in A_0[y]$ dividing $\rd_N(f)$ with exponent one, the type of order zero $\ty^0=(t)$ is both a root node and a leaf of $\tcal(f)$. Choose a monic $g\in\Z[x]$ lifting $t$, and consider the division with remainder
$f=q\,g+a$, $\deg(a)<\deg (g)$.
Consider the set:
$$
\bb^0=\{q(\t),\t\, q(\t),\dots,\t^{\deg(t)-1}q(\t)\}.
$$

\begin{definition}\label{Nintegral}
A set $\bb\subset \Z_K$ is a (reduced) \emph{$N$-integral basis} of $K$ if it is a (reduced) $p$-integral basis simultaneously for all prime factors $p$ of $N$.
\end{definition}

The next theorem is the main result of the paper.

\begin{theorem}\label{pBasis}
Suppose that either $N$ is squarefree or all types in $\tcal(f)$ are unramified. Then,
the following set is a reduced $N$-integral basis of $K$:
$$
\bb=\bb^0\cup\left(\bigcup\nolimits_{S\in \st} \bb_S\right).
$$
\end{theorem}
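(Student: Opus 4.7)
The plan is to reduce to a single prime $p\mid N$ and adapt the argument used in \cite{bases} for classical $p$-types to the SF setting, via the SF-type/$p$-type bridge of Theorem~\ref{comparison} and the rational-function embedding of Theorem~\ref{AFp}. By Definition~\ref{Nintegral}, it suffices to prove that $\bb$ is a reduced $p$-integral basis for each prime factor $p$ of $N$. Fix such $p$ and set $\rho=\ord_p(N)$; the hypothesis ensures the standing assumption (2) of Section~\ref{secArit} at every leaf of $\tcal(f)$: either $\rho=1$ (squarefree case) or every $e_i=1$ (unramified case). By Lemma~\ref{reducedbasis}, the task splits into (i) showing $\#\bb=n$ and $w(\bb)\subset[0,1)$, and (ii) showing that $\bb$ is $p$-reduced.

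For (i), I would count leaf by leaf: each terminal $S\in\st$ of order $r_S$ contributes $\#\bb_S=m_{r_S+1}$ elements, and the possible order-zero leaf contributes $\deg(t)$. By Corollary~\ref{epfp} applied to each leaf, this contribution equals $\sum_{\p\in\pp_{\ty_S}}e_\p f_\p$; summing over leaves --- using that the sets $\pp_\lambda$ partition $\pp$ --- yields $\sum_\p e_\p f_\p=n$. For the bound $w(\bb)\subset[0,1)$, I would apply Theorem~\ref{denquot} to each $g_i$-quotient $q_{i,j_i}$ of $f$ to obtain $w_\p(q_{i,j_i}(\t))\ge \rho H_{i,j_i}$ for all $\p\in\pp$, and Proposition~\ref{value} together with the appropriate robustness and coprimality conditions upgrades this to equality for $\p\in\pp_{\ty_S}$. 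Multiplying and invoking $w_\p(N)=\rho$ gives $w_\p(q_\j)\ge \rho\bigl(\sum_iH_{i,j_i}-\lfloor\sum_iH_{i,j_i}\rfloor\bigr)$, with equality for $\p\in\pp_{\ty_S}$. In the squarefree case this fractional part lies in $[0,1)$; in the unramified case every $H_{i,j}$ is an integer, so the value is $0$. In either case $w(q_\j)\in[0,1)$, and the bound for $\bb^0$ is a direct computation from the division $f=q\,g+a$.

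For (ii), I would invoke the reduceness criterion Theorem~\ref{criterion}: for each $\delta\in w(\bb)$, I must verify that $\rd_\delta(\bb_\delta)$ is $(\Z/p\Z)$-linearly independent in $V=\prod_{\p\in\pp}\F_\p$. Combining Lemma~\ref{Rquot} with Theorem~\ref{AFp}(C), I would compute, for $q_\j\in\bb_S$ and $\p\in\pp_{\ty_S}$, the $\p$-component of $\rd_\delta(q_\j)$ as $\ga_{\ty_S,\p}$ applied to a monomial of the form $z_0^{j_0}z_1^{j_1}\cdots z_{r_S}^{j_{r_S}}$ in $\rd_p(A)$ (up to nonzero scalars). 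Since the sets $\pp_\lambda$ are pairwise disjoint as $\lambda$ ranges over the leaves of $\tcal(f)$, global linear independence in $V$ splits leaf by leaf into disjoint blocks, and within each leaf it reduces, via the injectivity of $\ga_{\ty_S}$ (Theorem~\ref{AFp}(A)), to the linear independence of the monomials $z_0^{j_0}\cdots z_{r_S}^{j_{r_S}}$ in $\rd_p(A)$. The latter is a consequence of iterated application of Lemma~\ref{augmentation}.

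The main obstacle I anticipate is the explicit identification in (ii): matching $\rd_\delta(q_\j)$ with the prediction of Theorem~\ref{AFp}(C) on the nose. This requires tracking the powers of $\phi_{r_S}$ and $\pi_{r_S}$ produced by the $g_{r_S}$-quotient formula of Lemma~\ref{Rquot}, reconciling them with the normalization factor $N^{-\lfloor\sum_i H_{i,j_i}\rfloor}$ in the definition of $q_\j$, and doing the same bookkeeping at each lower level. Once these exponents are reconciled the linear independence is formal, and Lemma~\ref{reducedbasis} completes the argument.
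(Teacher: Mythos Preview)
Your overall architecture matches the paper's (reduce to a prime $p$, verify $\#\bb=n$ and $w(\bb)\subset[0,1)$, then invoke Theorem~\ref{criterion}), but part (ii) has two genuine gaps.

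First, the claim that ``global linear independence in $V$ splits leaf by leaf into disjoint blocks'' is wrong. Although $V=\prod_S V_S$, for $q_\j\in\bb_S$ and $\p\in\pp_T$ with $T\ne S$ the component of $\rd_\delta(q_\j)$ at $\p$ need not vanish; it vanishes only when $w_\p(q_\j)>\delta$, and by the proof of Theorem~\ref{denquot} this happens precisely when $T$ does \emph{not dominate} $S$ in the partial order of Lemma~\ref{dom}. So the matrix is block-\emph{triangular}, not block-diagonal, and the paper argues by choosing a domination-minimal $T\in\mathcal{S}_\delta$, projecting to $V_T$ (where only $\bb_{T,\delta}$ survives), forcing those coefficients to zero, and iterating. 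Second, inside a fixed leaf the image $\ga_{\ty_S}^{-1}\bigl(\op{pr}_S(\rd_\delta(q_\j))\bigr)$ is not a monomial $z_0^{j_0}\cdots z_r^{j_r}$ up to scalar. By Lemma~\ref{Rquot}, $R_r(q_{r,j_r})(z_r)$ is a \emph{tail} $c_{s_\ell}+\cdots+c_{s_d}z_r^{d-\ell}$ of $R_r(f)(z_r)$, a genuine polynomial in $z_r$ whose degree depends on $j_r$. The paper (Lemmas~\ref{alphabeta} and~\ref{bsreduced}) computes the image precisely and proves independence in $V_S$ by a second triangularity argument, reading off the $z_r^{f_r-1}$-coefficient first (only $k=f_r-1$ contributes, with unit leading coefficient $\rd_p(c_d)$), then $z_r^{f_r-2}$, and so on. Your ``monomials plus Lemma~\ref{augmentation}'' shortcut is not available.

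A smaller issue in (i): equality $w_\p(q_{i,j_i}(\t))=\rho H_{i,j_i}$ need not hold for every $\p\in\pp_{\ty_S}$, since Proposition~\ref{value} requires $\gcd_{A_i}(R_i(q_{i,j_i}),t_i)=1$, which can fail. The paper's Lemma~\ref{Hequal} constructs a single $\p\in\pp_S$ (by choosing suitable maximal ideals level by level) for which equality holds at every $i$; that is all you need for $w(q_\j)\in[0,1)$.
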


The proof of this theorem requires some auxiliary results.

By a recurrent application of Proposition \ref{splitting}, $\pp$ splits into the disjoint union of the subsets $\pp_\ty$, for $\ty$ running on the leaves of $\tcal(f)$. In other words,
$$
\pp=\pp^0\cup \left(\bigcup\nolimits_{S\in \st} \pp_S\right),\qquad \pp^0:=\pp_{\ty^0},\quad\pp_S:=\pp_{\ty_S}.
$$


\begin{lemma}\label{Hequal}
Let $S$ be a terminal side. For any $\,\j=(j_0,\dots,j_r) \in J_S$, there exists $\p\in \pp_S$ such that
\begin{equation}\label{Hequaleq}
w_\p(\t^{j_0})=0;\qquad w_\p(q_{i,j_i}(\t))=\rho\,H_{i,j_i},\quad 1\le i\le r.
\end{equation}

\end{lemma}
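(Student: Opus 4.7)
The plan is to construct the desired $\p \in \pp_S$ by a top-down descent through the tree $\mm_p(A)$ of maximal ideals of the artinian algebra $A = A_{r+1}$ associated with $\ty := \ty_S$. By Theorem \ref{comparison}(D), selecting $\p \in \pp_S$ amounts to specifying a chain $\m_0 \subset \m_1 \subset \cdots \subset \m_{r+1} = \m$, where $\m_{i+1}$ is pinned down by the choice of an irreducible factor $\rd_{\m_i}(\psi^{(i)})$ of $\rd_{\m_i}(t_i)$ in $\F_{\m_i}[y]$; because $\ord_\ty(f) = 1$ by terminality of $S$, Corollary \ref{squarefree} forces $\ord_{\ty_\m}(f) = 1$, so $\ptm$ is a singleton supplying a unique $\p$ for each $\m$. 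Theorem \ref{denquot} already yields $w_\p(q_{i,j_i}(\t)) \ge \rho\,H_{i,j_i}$ for every $\p \in \pp$, so the task is to force the reverse inequalities simultaneously at every $i \ge 1$, together with $w_\p(\t^{j_0}) = 0$.

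By Proposition \ref{value}, the equality $w_\p(q_{i,j_i}(\t)) = \rho\,H_{i,j_i}$ holds for $\p \in \ptm$ as soon as $\ty_{\m_i} \nmid q_{i,j_i}$. Under the standing assumption that either $\rho = 1$ or $\ty$ is unramified, $\gcd(\rho,e_i) = 1$; Theorem \ref{comparison}(F)(ii) combined with the description of $\psi_{\m_i,i}$ in Theorem \ref{comparison}(C) then translates $\ty_{\m_i} \mid q_{i,j_i}$ into the divisibility $\rd_{\m_i}(\psi^{(i)}) \mid \rd_{\m_i}(R_i(q_{i,j_i}))$ in $\F_{\m_i}[y]$, where $\psi^{(i)} \in A_i[y]$ is the lifting parameterizing $\m_{i+1}$. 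Thus at each level $i \ge 1$ it suffices to pick $\psi^{(i)}$ among the irreducible factors of $\rd_{\m_i}(t_i)$ that do not divide $\rd_{\m_i}(R_i(q_{i,j_i}))$.

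The existence of such a choice follows from a sharp degree bound furnished by Lemma \ref{Rquot}: $\deg R_i(q_{i,j_i}) \le \lfloor j_i/e_i \rfloor \le f_i - 1 < f_i = \deg t_i$, using $\j \in J_S$. Since $t_i$ is monic the bound persists after reducing modulo any $\m_i$, hence $\rd_{\m_i}(t_i) \nmid \rd_{\m_i}(R_i(q_{i,j_i}))$; combined with the squarefreeness of $\rd_{\m_i}(t_i)$ in $\F_{\m_i}[y]$ (Lemma \ref{sfd}) and the distinctness of its irreducible factors, at least one such factor is admissible as $\psi^{(i)}$, irrespective of the prior partial chain. The nonvanishing of $\rd_{\m_i}(R_i(q_{i,j_i}))$, without which the degree argument would be vacuous, holds because its leading $y$-coefficient equals $\op{lc}(R_i(f))$, a unit in $A_i$ by virtue of the successful SFD call that produced $t_i$ within the SF-OM algorithm.

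Finally, $w_\p(\t^{j_0}) = 0$ is automatic when $j_0 = 0$; for $j_0 > 0$ it demands $\rdp(\psi^{(0)}) \ne y$. The only conceivable obstruction, $\rdp(t_0) = y$, would force $f_0 = 1$ and hence $j_0 = 0$ (by $j_0 < f_0$), so it does not arise; otherwise $\rdp(t_0)$ has an irreducible factor distinct from $y$, which I take as $\psi^{(0)}$. Assembling the level-by-level choices yields the desired $\m \in \mpa$ and the associated $\p \in \ptm$. The main obstacle is the uniform non-degeneracy of the reduced residual polynomials, which is precisely what the unit property of $\op{lc}(R_i(f))$ guarantees.
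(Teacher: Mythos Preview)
Your proof is correct and follows essentially the same approach as the paper: construct the chain $\m_0\subset\cdots\subset\m_{r+1}$ level by level, using Lemma \ref{Rquot} to bound $\deg R_i(q_{i,j_i})<f_i$ and then choosing an irreducible factor of $\rd_{\m_i}(t_i)$ that does not divide $\rd_{\m_i}(R_i(q_{i,j_i}))$. The only quibble is that the precise equality criterion you invoke (equality in $w_\p$ as soon as the irreducible $p$-type does not divide $q$) is contained in the \emph{proof} of Proposition \ref{value} (i.e.\ \cite[Prop.~2.9]{GMN}), not in its statement, and the unit property of $\op{lc}(R_i(f))$ comes from the standing $\ty_i$-robustness assumption rather than from the SFD call.
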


\begin{proof}
Let $\ty=\ty_S=(t_0;(g_1,\la_1,t_1);\dots;(g_r,\la_r,t_r))$.

For any $\p\in \pp_S$, let $\m\in\mpa$ such that $\p\in\pp_{\ty_\m}$. Then,
$$w_\p(\t)>0  \sii y\mid t_0\ \mbox{ and }\ \m\cap A_1=(\m_0,y).
$$
Hence, there exists $\m'\in\mx_p(A_1)$ such that $w_\p(\t^{j_0})=0$ for all $\p\in\pp_{\ty_\m}$, for all $\m\in\mpa$ with $\m\cap A_1=\m'$.
In fact, if $t_0=y$, then $j_0=0$, and any $\m'\in\mx_p(A_1)$ does the job. If $t_0\ne y$, there are irreducible factors in $\rd_p(t_0)$ leading to maximal ideals in $\mx_p(A_1)$ different from $(\m_0,y)$.

Now starting with this $\m'\in\mx_p(A_1)$, we use a recurrent argument to show the existence of $\m\in\mpa$ such that $\m\cap A_1=\m'$ and $$w_\p(q_{i,j_i}(\t))=\rho\,H_{i,j_i}, \quad 1\le i\le r,\qquad \forall\,\p\in\pp_{\ty_\m}.$$

Suppose $\m'\in\mx_p(A_r)$ satisfies this condition for all $\p\in\pp_{\ty_{\m'}}$ and all $i<r$. Denote $q=q_{r,j_r}$.
By Theorem \ref{comparison}, there is a unique $p$-adic irreducible factor $\phi_{\m'}$ of $g_r$ which is a representative of $\ty_{\m'}$, and satisfies
$$
R_{\m',r}(q)(y)=\tau_r(q)\rdm(R_r(q))(\sigma_ry),
$$
for some non-zero constants $\tau_r(q),\sigma_r\in \F_{\m'}$.
By Lemma \ref{Rquot},
$$
\deg R_r(q)\le(s_{d_r}-(s_{d_r}-j_r))/e_r\le j_r/e_r< f_r.
$$
Since $\rdm(t_r)$ is squarefree, it has a monic irreducible factor not dividing $\rdm(R_r(q))$. If we write this irreducible factor as $\rdm(\varphi)$ for some monic $\varphi\in A_r[y]$, we see that $\psi(y):=\sigma_r^{-\deg\varphi}\rdm(\varphi)(\sigma_ry)$ does not divide $R_{\m',r}(q)$. Hence, for the maximal ideal $\m=(\m',\varphi)$ in $\mx_p(A_{r+1})$, the type $\ty_\m=(\ty_{\m'};(\phi_\m,\rho\la_r,\psi))$ divides $f$ and it  does not divide $q$. Hence,
$$
w_\p(q(\t))=v_{\m,r}(q)/(e_0\cdots e_r)=\rho\, v_r(q)/(e_0\cdots e_r)=\rho\,H_s.
$$
for every $\p\in \pp_{\ty_\m}$, by \cite[Prop. 2.9]{GMN}.
\end{proof}

\begin{lemma}\label{w<1}
We have $\#\bb=n$ and $0\le w(\alpha)<1$, for all $\alpha\in\bb$.
\end{lemma}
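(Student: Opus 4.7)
The plan is to separately handle the cardinality and the two inequalities on $w(\alpha)$, using Theorem \ref{denquot} for the lower bound and Lemma \ref{Hequal} together with the dichotomy in the hypothesis of Theorem \ref{pBasis} for the upper bound.

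For the count, $|J_S| = \prod_{i=0}^{r} e_i f_i = m_{r+1}$ gives $|\bb_S| = m_{r+1}$ for each terminal side $S$ of order $r$, while $|\bb^0| = \deg t = f_0$ when $\tcal(f)$ carries a leaf $\ty^0$ of order zero. Corollary \ref{epfp}(1) applied to every leaf $\ty$ of $\tcal(f)$ gives $\sum_{\p\in\pp_\ty} e_\p f_\p = (e_1\cdots e_r)(f_0\cdots f_r) = m_{r+1}$, and iterating Proposition \ref{splitting} along $\tcal(f)$ shows that $\pp$ decomposes as the disjoint union of $\pp^0$ and the $\pp_S$ for $S\in\st$. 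Summing these identities yields $\#\bb = \sum_{\p\in\pp} e_\p f_\p = n$.

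Now fix a prime $p\mid N$ and set $\rho = \ord_p(N)$. For $\alpha = q_\j \in \bb_S$, I would iterate Theorem \ref{denquot} along the branch of $\tcal(f)$ ending at $\ty_S$ (the hypotheses hold at every level since either $\rho=1$ or all $e_i = 1$) to obtain $w_\p(q_{i,j_i}(\t)) \ge \rho H_{i,j_i}$ for every $\p\in\pp$ and every $1 \le i \le r$. Writing $H := H_{1,j_1} + \cdots + H_{r,j_r}$ and using $w_\p(N) = \rho$, this gives $w_\p(q_\j) \ge j_0 w_\p(\t) + \rho H - \rho\lfloor H \rfloor \ge \rho\{H\} \ge 0$, so $w(q_\j)\ge 0$. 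Lemma \ref{Hequal} supplies some $\p\in\pp_S$ realizing equality throughout, whence $w(q_\j) = \rho\{H\}$. The hypothesis of Theorem \ref{pBasis} then closes the gap to $1$: either $N$ is squarefree so $\rho = 1$ and $\rho\{H\} < 1$; or every type in $\tcal(f)$ is unramified, forcing $e_0 = \cdots = e_r = 1$, in which case each $H_{i,j_i} = v_i(q_{i,j_i})\in\Z$, so $\{H\} = 0$ and $w(q_\j) = 0$.

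For $\alpha = \t^i q(\t) \in \bb^0$, integrality at every $\p\in\pp$ is automatic since $q\in\Z[x]$ and $\t\in\Z_K$. For the upper bound, Hensel's lemma applied to the coprime factorization $\rd_p(f) = \rd_p(t)\cdot(\text{rest})$ yields $f = f_0 f_1$ in $\Z_p[x]$ with $f_0\equiv t \pmod p$, $f_1$ coprime to $t$ modulo $p$, and $f_0 = \prod_{\p\in\pp^0}F_\p$; combining with $f = qg + a$ and $g\equiv t \pmod p$ forces $a\equiv 0 \pmod p$ and $q\equiv f_1 \pmod p$, giving $w_\p(q(\t)) = 0$ for every $\p\in\pp^0$. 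Since $\rd_p(t)$ is squarefree, at most one of the factors $\psi_{\m_\p,0}$ equals $y$, so when $\deg t > 1$ there exists $\p\in\pp^0$ with $\t\notin\p$, whence $w_\p(\t^i q(\t)) = 0$; the case $\deg t = 1$ forces $i=0$ and $w(q(\t)) = 0$. In every case $w(\alpha) = 0 < 1$.

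The main obstacle is the strict upper bound $w(q_\j) < 1$ for elements of $\bb_S$: a priori the value $\rho\{H\}$ could reach or exceed $1$ when $\rho > 1$, and the dichotomy in the hypothesis of Theorem \ref{pBasis} is precisely designed to rule this out---either $\rho = 1$ tames the fractional part, or unramification forces $H\in\Z$ so that the fractional part vanishes identically.
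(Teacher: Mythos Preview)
Your proof is correct and follows essentially the same approach as the paper. You are in fact more explicit than the paper on the $\bb_S$ case: the paper compresses the lower bound (via Theorem~\ref{denquot}), the realization of equality (via Lemma~\ref{Hequal}), and the dichotomy ``$\rho=1$ or unramified'' into the single sentence ``$0\le w(\alpha)<1$ by Lemma~\ref{Hequal}'', whereas you spell all three steps out. For $\bb^0$ the paper avoids Hensel by arguing directly modulo $N$: from $f=qg+a$ with $\deg a<\deg g$ and $t\mid \rd_N(f)$, minimality of the monic polynomial $t$ forces $\rd_N(a)=0$, whence $\rd_N(f)=\rd_N(q)\,t$ and, since $t$ occurs exactly once in the squarefree decomposition, $\rd_N(q)$ is coprime to $t$; reducing mod $p$ then gives $w_\p(q(\t))=0$ for all $\p\in\pp^0$. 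Your Hensel route reaches the same conclusion, just with a small detour.
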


\begin{proof}
By Corollary \ref{epfp}, $\#\bb^0=\sum_{\p\in\pp^0}e_\p f_\p$ and
$$
\#\bb_S=(e_0f_0)\cdots(e_rf_r)=\sum\nolimits_{\p\in\pp_S}e_\p f_\p,\quad \forall S\in\st.
$$
Hence, $\#\bb=\sum_{\p\in\pp}e_\p f_\p=n$.

For any terminal side $S$ and any $\alpha\in\bb_S$, we have $0\le w(\alpha)<1$ by Lemma \ref{Hequal}. Let us show that $w(\alpha)=0$ for all $\alpha\in\bb^0$.

Let $\ty^0=(t)$ be the leaf of order zero of $\tcal(f)$. Let $g\in\Z[x]$ be a monic lifting of $t$, and consider the division with remainder  $f=qg+a$. We have $\rd_N(a)=0$, and $\rd_N(f)=\rd_N(q)\rd_N(g)=\rd_N(q)t$, so that $\rd_N(q)$ is coprime with $t$. Hence, $\rd_p(q)$ is coprime with $\rd_p(t)$, and this implies $w_\p(q(\t))=0$ for all $\p\in\pp^0$.

If $t=y$, then $\bb^0=\left\{q(\t)\right\}$ and the lemma is proven. If $t\ne y $, then $\rd_p(t)$ has irreducible factors different from $y$ and there are prime ideals $\p\in\pp^0$ such that $w_\p(\t)=0$. For them, we have
$w_\p(\t^jq(\t))=0$ for all $j$. 
\end{proof}

\begin{definition}
With the notation of Definition \ref{terminalS}, let $S$ be a terminal side and consider the splitting $\pp_{\ty_{r-1}}=\bigcup\nolimits_{\la,t}\pp_{\ty_{\la,t}}$ of Proposition \ref{splitting}:

A terminal side $T$ is said to \emph{dominate} $S$ if there exists a pair $(\la,t)$ with $\la\ge \la_S$, such that $\ty_{\la,t}$ is a truncation of the leaf $\ty_T$ of $\tcal(f)$ (or equivalently, $\pp_T\subset\pp_{\ty_{\la,t}}$). In this case, we write $T\ge S$.
\end{definition}

\begin{lemma}\label{dom}
\begin{enumerate}
\item Domination is a partial ordering on $\st$.
\item For any $S,T\in \st$ such that $T\not\ge S$, we have
$$
\alpha\in\bb_S,\quad \p\in \pp_T\ \imp\ w_\p(\alpha)>w(\alpha).
$$
\end{enumerate}
\end{lemma}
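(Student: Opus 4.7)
For part (1), reflexivity follows by taking $(\la, t) = (\la_S, t_S)$. For antisymmetry, suppose $T \geq S$ and $S \geq T$, and let $r = \op{ord}(\ty_S)$, $r' = \op{ord}(\ty_T)$. The order inequalities $r' \geq r$ and $r \geq r'$ force $r = r'$; the shared level-$(r-1)$ truncation then gives a common parent $\ty_{r-1}$, and the two slope inequalities force $\la_S = \la_T$. The crucial point is that a terminal side is uniquely determined by the pair (parent, slope): the squarefree factor $t_r := T_1$ is forced, because only the $\ell_1 = 1$ factor of the SFD of $R_{v_{r-1}, g_r, \la_S}(f)$ produces a terminal leaf. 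Hence $S = T$. Transitivity $U \geq T \geq S \Rightarrow U \geq S$ will follow by chasing truncations along the order chain $\op{ord}(\ty_U) \geq \op{ord}(\ty_T) \geq r$: the level-$r$ truncation of $\ty_U$ either agrees with that of $\ty_T$ (when $\op{ord}(\ty_T) > r$), or has slope $\geq \la_T \geq \la_S$ (when $\op{ord}(\ty_T) = r$).

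For part (2), I will apply Theorem \ref{denquot} at each level $i = 1, \dots, r$, with the parent type taken to be the $(i-1)$-th truncation of $\ty_S$ and the slope $\la_i$. This gives $w(q_{i, j_i}(\t)) \geq \rho H_{i, j_i}$ on all of $\pp$. Combined with $w_\p(\t^{j_0}) \geq 0$, we obtain $w_\p(\alpha) \geq \rho \{H\}$ for every $\p \in \pp$, where $H = H_{1, j_1} + \cdots + H_{r, j_r}$ and $\{H\} := H - \lfloor H \rfloor$. Lemma \ref{Hequal} produces a $\p^* \in \pp_S$ at which equality holds, so $w(\alpha) = \rho \{H\}$.

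To promote this to a strict inequality at $\p \in \pp_T$ when $T \not\geq S$, I will revisit the proof of Theorem \ref{denquot} at level $r$, applied to $q_{r, j_r}$. The failure $T \not\geq S$ yields two cases. If $\p \not\in \pp_{\ty_{r-1}}$, the subcase ``$\p \not\in \pp_{\ty_{r-1}}$'' in that proof gives the strict bound $w_\p(q_{r, j_r}(\t)) > \rho H_{r, j_r}$, since the estimate $\epsilon_{r-1}(G^\j) < \epsilon_r(g_r)$ there is strict. If $\p \in \pp_{\ty_{r-1}}$, then Proposition \ref{splitting} places $\p$ in a unique piece $\pp_{\ty_{\la, t}}$, and $T \not\geq S$ forces $\la < \la_S$; the ``$\mu < \la_r$'' subcase of the proof then again yields the strict bound. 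Propagating this level-$r$ gain through the formula for $w_\p(\alpha)$ gives $w_\p(\alpha) > \rho \{H\} = w(\alpha)$. The main obstacle will be that Theorem \ref{denquot} as stated produces only $\geq$, so the strict inequality must be extracted from within its proof rather than cited directly.
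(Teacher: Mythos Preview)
Your proposal is correct and follows essentially the same approach as the paper's own proof. For part (1) you structure the antisymmetry and transitivity arguments via order comparisons, which is slightly more explicit than the paper's treatment but amounts to the same reasoning; for part (2) both you and the paper extract the strict inequality $w_\p(q_{r,j_r}(\t))>\rho H_{r,j_r}$ from inside the proof of Theorem~\ref{denquot} (the cases $\p\notin\pp_{\ty_{r-1}}$ and $\mu<\la_r$), combine it with the non-strict bounds at levels $i<r$, and invoke Lemma~\ref{Hequal} to identify $w(\alpha)$.
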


\begin{proof}
Let $S\in\st$, and let $(\la_S,t_S)$ be the unique pair such that $\ty_S=\ty_{\la_S,t_S}$.
The reflexive property $S\ge S$ is obvious.

Let $T\in\st$ such that $T\ge S$. Let $(\la,t)$ be the unique pair such that $\la\ge\la_S$ and $\ty_{\la,t}$ is a truncation of $\ty_T$.

If $T\ne S$, then $(\la_S,t_S)\ne(\la,t)$ and $\ty_S=\ty_{\la_S,t_S}$ cannot have  $\ty_{\la,t}$ as one of its truncations. This shows that domination is antisymmetric.

Now, suppose $R\ge T$. If $\ty_T=\ty_{\la,t}$, then there is a pair $(\la',t')$ with $\la'\ge\la$ such that $\ty_{\la',t'}$ is a truncation of $\ty_R$. If $\ty_T\ne \ty_{\la,t}$, then the previous node of $\ty_T$ is a truncation of $\ty_R$. In both cases, $R\ge S$. This shows that domination is transitive and ends the proof of item (1).

Let $\alpha=q_\j\in \bb_S$ for some $\j=(j_0,\dots,j_r)\in J_S$.
If  $T\not\ge S$, then a prime ideal $\p\in\pp_T$ satisfies either $\p\not\in\pp_{\ty_{r-1}}$, or $\p\in\pp_{\mu,t}$ with $\mu<\la_S$. In both cases, we saw  along the proof of Theorem \ref{denquot} that $w_\p(q_{r,j_r})>\rho\,H_{r,j_r}$. Since  $w_\p(q_{i,j_i})\ge\rho\,H_{i,j_i}$ for all $1\le i<r$, again by Theorem \ref{denquot}, we deduce
$$
w_\p(\alpha)>\rho\,(H_{1,j_1}+\cdots+H_{r,j_r})=w(\alpha),
$$
the last equality by Lemma \ref{Hequal}.
\end{proof}

\begin{lemma}\label{alphabeta}
Let $S\in\st$. For $0\le i\le r$ and any $\epsilon\in\Q$ we denote
$$
\bb_{S,\epsilon}^{(i)}=\left\{\dfrac{\t^{j_0} q_{1,j_1}(\t)\cdots q_{i,j_i}(\t)}{N^{\lfloor H_{1,j_1}+\cdots +H_{i,j_i} \rfloor}}\ \Big|\  0\le j_\ell<e_\ell f_\ell,\ 0\le \ell\le i\right\}\cap \bb_\epsilon.
$$

For any $\delta \in w(\bb_S)$, there exists a unique integer $0\le a<e_r$ such that
\begin{equation}\label{alpha}
\bb_{S,\delta}:=\bb_S\cap\bb_\delta=\bigcup\nolimits_{0\le k<f_r}q_{r,ke_r+a}(\t)N^{-m_k}\,\bb_{S,\delta_k}^{(r-1)},
\end{equation}
where $m_k\in\Z$ and $\delta_k\in\Q$ depend only on $S$, $\delta$ and $k$.
\end{lemma}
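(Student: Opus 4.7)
First I would establish the clean valuation formula
\[w(q_\j)\;=\;\rho\{H\},\qquad H\;:=\;H_{1,j_1}+\cdots+H_{r,j_r},\]
for every $\j\in J_S$. Lemma~\ref{Hequal} exhibits a prime $\p\in\pp_S$ at which $w_\p(\t^{j_0})=0$ and $w_\p(q_{i,j_i}(\t))=\rho H_{i,j_i}$, giving the upper bound $w(q_\j)\le\rho\{H\}$; the matching lower bound $w_\p(q_\j)\ge\rho\{H\}$ for \emph{every} $\p\in\pp$ comes from applying Theorem~\ref{denquot} at each level $1\le i\le r$, which yields the global bound $w_\p(q_{i,j_i}(\t))\ge\rho H_{i,j_i}$. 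I would also record two book-keeping facts for later: each $H_{i,j_i}$ lies in $(e_0\cdots e_i)^{-1}\Z$ since $v_i$ is $\Z$-valued, and the affine formula
\[H_{r,j_r}\;=\;H_{r,0}\,+\,j_r\,\frac{h_r+e_rV_r}{e_0\cdots e_r}\]
drops out of the computation of $v_r(q_s)$ inside the proof of Theorem~\ref{denquot}.

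Next I would pin down the exponent $a$. Set $H':=H_{1,j_1}+\cdots+H_{r-1,j_{r-1}}\in(e_0\cdots e_{r-1})^{-1}\Z$. Reducing the identity $H=H'+H_{r,j_r}$ modulo $(e_0\cdots e_{r-1})^{-1}$ annihilates both $H'$ and the $V_r$-term of $H_{r,j_r}$, leaving
\[\{H\}\;\equiv\;H_{r,0}+\frac{j_r h_r}{e_0\cdots e_r}\pmod{\tfrac{1}{e_0\cdots e_{r-1}}}.\]
Writing $j_r=ke_r+a$ collapses this further to a congruence in $ah_r\pmod{e_r}$. The hypothesis $\gcd(\rho,e_r)=1$ lets me recover $\{H\}$ from $\delta=\rho\{H\}$, and $\gcd(h_r,e_r)=1$ then recovers $a$ uniquely. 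Hence $a$ depends only on $\delta$ and $S$, independently of the specific $\j\in J_S$ realising $\delta$; existence is guaranteed by the hypothesis $\delta\in w(\bb_S)$.

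With $a$ in hand, I would set
\[\delta_k\;:=\;\rho\{\delta/\rho-H_{r,ke_r+a}\},\qquad m_k\;:=\;\lceil H_{r,ke_r+a}-\delta/\rho\rceil,\]
both manifestly depending only on $S$, $\delta$ and $k$. For $q_\j\in\bb_{S,\delta}$ with $j_r=ke_r+a$, regroup
\[q_\j\;=\;q_{r,ke_r+a}(\t)\,N^{-m_k}\,\alpha',\qquad \alpha'\;:=\;\t^{j_0}q_{1,j_1}(\t)\cdots q_{r-1,j_{r-1}}(\t)\,N^{-\lfloor H'\rfloor}.\]
A one-line floor identity using $\{H\}=\delta/\rho$ gives $m_k=\lfloor H\rfloor-\lfloor H'\rfloor$ (so the powers of $N$ match) and $\rho\{H'\}=\delta_k$; then Step~1 applied at level $r-1$ yields $w(\alpha')=\delta_k$, so $\alpha'\in\bb^{(r-1)}_{S,\delta_k}$. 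The reverse inclusion is obtained by reading the same identities backwards. The main obstacle throughout is the bookkeeping of Step~2 that isolates $a$: one must select exactly the modulus $1/(e_0\cdots e_{r-1})$ in order to sweep away both the $H'$-contribution and the $V_r$-term of $H_{r,j_r}$, and then apply the two coprimality hypotheses $\gcd(\rho,e_r)=\gcd(h_r,e_r)=1$ in the correct order. Once this is done, the clean formula $w(q_\j)=\rho\{H\}$ of Step~1 makes the decomposition formal.
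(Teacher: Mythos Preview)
Your proof is correct and follows the same strategy as the paper: both isolate the residue $a=j_r\bmod e_r$ via the congruence $ah_r\equiv e_0\cdots e_r\{H\}\pmod{e_r}$, then verify that $m_k$ and $\delta_k$ depend only on $S,\delta,k$ by splitting $q_\j=q_{r,j_r}(\t)N^{-m_k}\beta$. Your affine formula for $H_{r,j_r}$ is a clean repackaging of the paper's Newton-polygon computation (formula~(\ref{Hrjr}) and Figure~\ref{figNQ}); the one superfluous step is invoking $\gcd(\rho,e_r)=1$ to recover $\{H\}$ from $\delta$, since $\{H\}=\delta/\rho$ directly (the paper handles $\rho>1$ by noting it forces all $e_i=1$, trivializing the statement).
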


\begin{proof}
Let $\ty_S=(t_0;(g_1,\la_1,t_1);\dots;(g_r,\la_r,t_r))$.
By Corollary \ref{epfp}, all $\p\in\pp_S$ have ramification index $e_\p=e_0\cdots e_r$. Thus, there is an integer $b$ for which
$$\delta=b/e_0\cdots e_r,\qquad 0\le b<e_0\cdots e_r.
$$
Take $\alpha=q_\j\in\bb_{S, \delta}$ for $\j=(j_0\dots,j_r)\in J_S$. Write $\alpha=q_{r,j_r}(\t)N^{-m_k}\beta$ with
$$
\begin{array}{c}
\beta=\t^{j_0}q_{1,j_1}(\t)\cdots q_{r-1,j_{r-1}}(\t)N^{-\lfloor H_{1,j_1}+\cdots +H_{r-1,j_{r-1}}\rfloor},\\
m_k=\lfloor H_{1,j_1}+\cdots +H_{r,j_{r}}\rfloor- \lfloor H_{1,j_1}+\cdots +H_{r-1,j_{r-1}}\rfloor.
\end{array}
$$

Denote $s=s_{d_r}-j_r$, $q_s=q_{r,j_r}$, and let $j_r=ke_r+a$, $0\le a<e_r$.

The Newton polygon $N_r(q_s)$, displayed in Figure \ref{figNQ}, is easy to deduce from $N_r(q_sg_r^s)$, which was described in Figure \ref{figSplit}.

We denote by $(s_0,u_0)$ the left endpoint of $S_r(q_s)$, and write $s_0=\ell e_r+a$ for some integer $0\le \ell\le k$. A look at Figure \ref{figNQ} shows that
\begin{equation}\label{Hrjr}
H_{r,j_r}=(u_0+s_0\la_r))/e_0\cdots e_{r-1}=\left((u_0+\ell h_r)e_r+ah_r\right)/e_0\cdots e_r.
\end{equation}

\begin{figure}\caption{Newton polygon of $q_{r,j_r}$. The line $L$ has slope $-\la_r$.}\label{figNQ}\setlength{\unitlength}{4.mm}
\begin{picture}(15,10.5)
\put(-.2,8.7){$\bullet$}\put(.4,5.9){$\times$}\put(1.6,5.3){$\times$}
\put(2.9,4.7){$\bullet$}\put(4,4.1){$\times$}\put(5.2,3.6){$\bullet$}
\put(6.4,2.9){$\times$}\put(7.7,2.3){$\bullet$}
\put(0,-.5){\line(0,1){10.5}}\put(-1,0.5){\line(1,0){17}}
\put(3.2,4.8){\line(-3,4){3.2}}\put(3.2,4.86){\line(-3,4){3.2}}
\put(7.8,2.55){\line(5,-1){4.5}}\put(7.8,2.58){\line(5,-1){4.5}}
\put(3,5.03){\line(2,-1){5}}
\multiput(.75,.3)(0,.25){24}{\vrule height2pt}
\multiput(3.1,.3)(0,.25){19}{\vrule height2pt}
\multiput(7.9,.3)(0,.25){10}{\vrule height2pt}
\put(.6,-.4){\begin{footnotesize}$a$\end{footnotesize}}
\put(2.4,-.4){\begin{footnotesize}$s_{0}\!=\!a\!+\!\ell e_r$\end{footnotesize}}
\put(7.4,-.4){\begin{footnotesize}$j_r\!=\!a\!+\!ke_r$\end{footnotesize}}
\multiput(-.1,4.9)(.25,0){14}{\hbox to 2pt{\hrulefill }}
\put(-1.2,4.7){\begin{footnotesize}$u_{0}$\end{footnotesize}}
\put(-.6,-.4){\begin{footnotesize}$0$\end{footnotesize}}
\put(2.5,8){\begin{footnotesize}$N_{r}(q_{r,j_r})$\end{footnotesize}}
\put(0,6.5){\line(2,-1){10}}
\put(-5.5,6.3){\begin{footnotesize}$e_0\cdots e_{r-1}H_{r,j_r}$\end{footnotesize}}
\put(-.1,6.5){\line(1,0){.2}}\put(10,1){\begin{footnotesize}$L$\end{footnotesize}}
\end{picture}\end{figure}
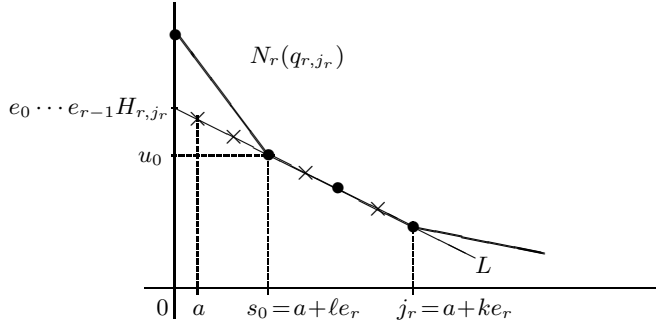

Take $\p\in \pp_S$ satisfying (\ref{Hequaleq}). By Theorem \ref{denquot},
$$
\begin{array}{l}
w(\alpha)=w_\p(\alpha)=\rho\left(H_{r,j_r}-m_k\right)+w_\p(\beta),\\
w(\beta)=w_\p(\beta)=\rho\left(H_{1,j_1}+\cdots +H_{r-1,j_{r-1}}-\lfloor H_{1,j_1}+\cdots +H_{r-1,j_{r-1}}\rfloor\right).
\end{array}
$$
Recall that $e_0\cdots e_iH_{i,j_i}=v_i(q_{i,j_i})\in\Z$ for all $i$. By our general assumptions, $\rho>1$ occurs only if $e_0=\cdots=e_r=1$, in which case all $H_{i,j_i}$ are integers and $w(\alpha)=w(\beta)=0$. Hence, we may take $\rho=1$ in the above equations. From these equalities and (\ref{Hrjr}) we deduce the existence of $b_k\in\Z$ such that
\begin{equation}\label{bprima}
\begin{array}{c}
w(\beta)=w_\p(\beta)=b_k/e_0\cdots e_{r-1},\qquad 0\le b_k<e_0\cdots e_{r-1},\\
b/e_0\cdots e_r=w(\alpha)=-m_k+\left((u_0+\ell h_r+b_k)e_r+ah_r)/(e_0\cdots e_r\right).
\end{array}
\end{equation}

Hence, the integer $a$ depends only on $S$ and $\delta$, because it is uniquely determined by the conditions:
$$
ah_r\equiv b\md{e_r},\qquad 0\le a <e_r.
$$
In particular, there are only $f_r$ possible values of $j_r=ke_r+a$, determined by the choice of $0\le k<f_r$.
Also, the integers $m_k,b_k$ depend only on $S$, $\delta$ and $k$. In fact, the integers $u_0$ and $\ell$ depend only on $S$ and $k$, and $b_k$ is uniquely determined by the conditions:
$$
b_k\equiv \dfrac{b-ah_r}{e_r}-u_0-\ell h_r\md{(e_0\cdots e_{r-1})},\qquad 0\le b_k<e_0\cdots e_{r-1}.
$$
The integer $m_k$ is then determined by (\ref{bprima}).
Thus, the proof of (\ref{alpha}) is complete, by taking $\delta_k=w(\beta)=b_k/e_0\cdots e_{r-1}$.
\end{proof}

\begin{lemma}\label{bsreduced}
For $S\in\st$ let $V_S=\prod_{\p\in\pp_S}\F_\p$, and denote $\op{pr}_S\colon V\to V_S$ the canonical projection. Then, $\op{pr}_S(\rd_\delta(\bb_{S,\delta}))$ is a $(\Z/p\Z)$-basis of $V_S$ for all $\delta\in w(\bb_S)$.
\end{lemma}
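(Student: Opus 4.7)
The plan is to transfer the question to the inductive algebra $\rdp(A)$ via the $(\Z/p\Z)$-algebra isomorphism $\gt\colon \rdp(A)\iso V_S$ from Corollary~\ref{epfp}. Both $V_S$ and $\rdp(A)$ have $(\Z/p\Z)$-dimension $f_0f_1\cdots f_r$, with $\rdp(A)$ admitting the explicit monomial basis $\{\rdp(z_0^{i_0}\cdots z_r^{i_r}) : 0\le i_k<f_k\}$. I would prove that the images $\op{pr}_S(\rd_\delta(\bb_{S,\delta}))$ correspond, via $\gt^{-1}$, to this monomial basis up to multiplication by units.

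First, choose a uniformizer valid simultaneously for all $\p\in\pp_S$: when $\rho=1$, take $\pi_\p=\pi_{r+1}(\t)$, which has $w_\p$-valuation $1/e_\p$ by Lemma~\ref{wrphir}; when $\ty_S$ is unramified, take $\pi_\p=p$. Write $\delta=b/(e_0\cdots e_r)$, so that $\lfloor e_\p\delta\rfloor=b$. For $\alpha=q_\j(\t) N^{-m}\in \bb_{S,\delta}$ with $\j=(j_0,\ldots,j_r)$,
$$
\op{pr}_S(\rd_\delta(\alpha)) = \bigl(\t^{j_0}q_{1,j_1}(\t)\cdots q_{r,j_r}(\t)\, N^{-m}\,\pi_{r+1}(\t)^{-b}+\p\bigr)_{\p\in\pp_S}.
$$
Since every $q_{i,j_i}$ is a $g_i$-quotient of the $\ty$-robust polynomial $f$, it is itself $\op{Trunc}_i(\ty)$-robust, so Theorem~\ref{AFp}(C) applies level-by-level: $q_{i,j_i}(\t)\phi_i(\t)^{-s_0(q_{i,j_i})}\pi_i(\t)^{-u_0(q_{i,j_i})}+\p=\ga_{\op{Trunc}_i(\ty),\p}(\rdp(R_i(q_{i,j_i})(z_i)))$. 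The plan is to peel off the factors from the outermost level inward, using the defining relations~\eqref{ratfs} to re-express $\phi_i,\pi_i$ in terms of the $\ga_j$ and $\pi_{r+1}$, and collect all the unit contributions into $\gt$. The result is an explicit $\mu(\alpha)\in\rdp(A)$ with $\op{pr}_S(\rd_\delta(\alpha))=\gt(\mu(\alpha))$, of the form
$$
\mu(\alpha) = (\text{unit})\cdot \rdp\bigl(R_r(q_{r,j_r})(z_r)\cdot\zeta(\j)\bigr),
$$
where $\zeta(\j)$ is a monomial in $z_0,\ldots,z_{r-1}$ whose exponents are dictated by $\j$ and the rational relations~\eqref{ratfs}.

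To conclude, invoke Lemma~\ref{Rquot}: $R_r(q_{r,j_r})$ is the truncation of $R_r(f)$ starting at the index determined by $j_r$, and since $\ord_\ty(f)=1$ forces $R_r(f)=t_r q$ with $\gcd_{A_r}(q,t_r)=1$, evaluating in $A_{r+1}=A_r[z_r]/(t_r)$ makes $\rdp(R_r(q_{r,j_r})(z_r))$ a unit multiple of $\rdp(z_r^{k})$, where $k$ ranges bijectively over $\{0,\ldots,f_r-1\}$ as $j_r$ ranges over the $f_r$ values permitted by the constraint $w(\alpha)=\delta$ (cf.\ the parameter $a$ isolated in Lemma~\ref{alphabeta}). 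An analogous argument applied recursively at each lower level, using that the constraint $w(\alpha)=\delta$ fixes $j_i\bmod e_i$ while allowing $(j_i-j_i^*)/e_i$ to range over $\{0,\ldots,f_i-1\}$, identifies $\{\mu(\alpha)\}_{\alpha\in\bb_{S,\delta}}$ with the monomial basis of $\rdp(A)$ up to units. Since $\gt$ is a $(\Z/p\Z)$-algebra isomorphism, the images form a $(\Z/p\Z)$-basis of $V_S$. The main obstacle will be the careful exponent bookkeeping in the middle paragraph: the rational identities~\eqref{ratfs} must be juggled to separate the clean monomial contribution $\zeta(\j)$ from the pure unit factors absorbed into $\gt$, and one must verify that, summed across levels, the induced reindexing $\j\mapsto(k_0,\ldots,k_r)$ is indeed a bijection of index sets.
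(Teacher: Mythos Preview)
Your overall strategy coincides with the paper's: pull back to $\rdp(A)$ via $\gt^{-1}$, apply Theorem~\ref{AFp}(C) level by level, and argue by induction on $r$ using the decomposition of Lemma~\ref{alphabeta}. The paper packages this as the Claim that $\gt^{-1}\bigl(\rd_{S,\epsilon}^{(i)}(\bb_{S,\epsilon}^{(i)})\bigr)$ is a $(\Z/p\Z)$-basis of $\rdp(A_{i+1})$ for each $0\le i\le r$.

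However, your central assertion---that $\rdp(R_r(q_{r,j_r})(z_r))$ is a \emph{unit multiple of} $\rdp(z_r^k)$---is false, and this is a genuine gap. By Lemma~\ref{Rquot}, $R_r(q_{r,j_r})(z_r)$ is a tail of $R_r(f)$ evaluated at $z_r$: it is a polynomial in $z_r$ of degree $k-\ell$ (for some $0\le\ell\le k$ depending on which residual coefficients vanish), with unit leading coefficient but typically with nonzero lower-order terms. The identity $R_r(f)=t_rq$ only says that the \emph{full} polynomial $R_r(f)$ vanishes at $z_r$; it gives no control over its truncations. What is true is that, after multiplying by the correction factor arising from $\phi_r^{s_0}\pi_r^{u_0}$ (your ``peeling off'' step; the paper's factor $\alpha_2$, handled via Lemma~\ref{gammas}), one obtains elements of the shape
\[
\zeta_{k,\beta}=\tau_k\,\rdp\!\bigl(c_{d-k+\ell}\,z_r^{\ell}+\cdots+c_d\,z_r^{k}\bigr)\,u_\beta,
\]
with $\tau_k\in\rdp(A_r)^*$, $u_\beta\in\rdp(A_r)$, and $c_d$ a unit. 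This family is \emph{triangular} in $k$ with respect to the $\rdp(A_r)$-basis $1,z_r,\dots,z_r^{f_r-1}$, not diagonal. The paper then extracts linear independence by reading off the $z_r^{f_r-1}$-coefficient of a dependence relation (which isolates $k=f_r-1$), invoking the inductive hypothesis on the $u_\beta$, and iterating downward in $k$. Your ``monomial basis up to units'' shortcut skips exactly this triangular peeling, and without it the argument is incomplete.

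A smaller point: the residue $j_i\bmod e_i$ is not a single value $j_i^*$ fixed by $\delta$ alone; it depends on the choices already made at levels $i+1,\dots,r$ through the quantities $\delta_k$ of Lemma~\ref{alphabeta}. The index set has a tree structure rather than a product structure, though the cardinality is still $f_0\cdots f_r$.
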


\begin{proof}
Take $0\le i\le r$ and $\epsilon=c/e_0\cdots e_i$, for some integer $0\le c<e_0\cdots e_i$.
Consider the mapping
$$
\rd_{S,\epsilon}^{(i)}\colon \,\bb_{S,\epsilon}^{(i)}\lra V_S,\qquad
\alpha\mapsto\left(\alpha/\pi_{i+1}(\t)^c+\p\Z_\p\right)_{\p\in\pp_S}
$$
By Lemma \ref{wrphir}, $w_\p(\pi_{i+1}(\t))=\rho/e_0\cdots e_i$ for all $\p\in\pp_S$. By our general assumptions, $\rho>1$ implies $e_0=\cdots= e_i=1$, in which case $c=0$. Thus,  $w_\p(\pi_{i+1}(\t)^c)=c/e_0\cdots e_i=\epsilon$ for all $\p\in\pp_S$, and $\rd_{S,\epsilon}^{(i)}$ is well defined.

Let $A$ be the artinian algebra attached to the type $\ty_S$.
By Corollary \ref{epfp}, the mapping $\ga_{\ty_S}\colon \rd_p(A)\to V_S$ is an isomorphism. Since $\bb_{S,\delta}^{(r)}=\bb_{S,\delta}$ and $\rd_{S,\delta}^{(r)}=\op{pr}_S\circ\rd_\delta$, the lemma is a consequence of the following:\medskip

\noindent{\bf Claim. }The set $\ga_{\ty_S}^{-1}\left(\rd_{S,\epsilon}^{(i)}\left(\bb_{S,\epsilon}^{(i)}\right)\right)$ is a $(\Z/p\Z)$-basis of $\rd_p(A_{i+1})$.\medskip

If $i=0$, then $\epsilon=0$ and $\bb_{S,0}^{(0)}=\{1,\t,\dots,\t^{f_0-1}\}$. The image of this set under $\ga_{\ty_S}^{-1}$ is $\{1,\rd_p(z_0),\dots,\rd_p(z_0)^{f_0-1}\}$, which is a $(\Z/p\Z)$-basis of $\rd_p(A_1)$ by Lemma \ref{augmentation}.

Assuming the Claim for some $0\le i<r$ and all $\epsilon\in (e_0\cdots e_i)^{-1}\Z\cap[0,1)$, let us show that it holds for $i+1$ and all $\epsilon\in (e_0\cdots e_{i+1})^{-1}\Z\cap[0,1)$.

For commodity we work out the case $i=r-1$.

Let $\delta=b/e_0\cdots e_r$, with $0\le b<e_0\cdots e_r$. By Lemma \ref{alphabeta}, the elements in the set $\bb_{S,\delta}^{(r)}=\bb_{S,\delta}$ may be parameterized as:
$$
\alpha=q_{r,ke_r+a}(\t)N^{-m_k}\beta,\qquad 0\le k<f_r,\ \beta\in \bb_{S,\delta_k}^{(r-1)}.
$$

With the notation of Lemma \ref{alphabeta}, for each $\p\in\pp_S$ we may express $\alpha/\pi_{r+1}(\t)^b+\p$ as the product $(\alpha_1+\p)(\alpha_2+\p)(\alpha_3+\p)$, with:
$$
\alpha_1=
\dfrac{q_{r,j_r}(\t)}{\phi_r(\t)^{s_0}\pi_r(\t)^{u_0}},\qquad
\alpha_2=
\dfrac{\phi_r(\t)^{s_0}\pi_r(\t)^{u_0}\pi_r(\t)^{b_k}}{N^{m_k}\pi_{r+1}(\t)^b},\qquad\alpha_3=
\dfrac{\beta}{\pi_r(\t)^{b_k}},
$$
where $(s_0,u_0)$ are taken from Theorem \ref{AFp}. If $R_r(f)=c_0+c_1y+\cdots+c_dy^d$, Lemma \ref{Rquot} and Theorem \ref{AFp},(C) show that (see Figure \ref{figNQ})
$$
\ga_{\ty_S}^{-1}\left(\alpha_1+\p\right)_{\p\in\pp_S}=\rd_p(R_r(q_{r,j_r})(z_r))=\rd_p(c_{d-k+\ell}+\cdots +c_d z_r^{k-\ell}).
$$

From the identities $b=ah_r-\nu e_r$ (for some integer $\nu$), $s_0=\ell e_r+a$, and $\ell_rh_r+\ell'_re_r=1$, we deduce:
\begin{equation}\label{exponents}
s_0-\ell_rb=\ell e_r+a-\ell_r(ah_r-\nu e_r)=(\ell+\ell'_ra+\ell_r\nu)e_r=(\ell+c)e_r,
\end{equation}
where the integer $c:=\ell'_ra+\ell_r\nu$ depends only on $S$ and $\delta$.

Let  $h\in\Q(x)$ such that $\alpha_2=h(\t)$. By (\ref{exponents}) and  (\ref{ratfs}), we can write
\begin{align*}
h=&\;\phi_r^{s_0}\pi_r^{u_0+b_k}N^{-m_k}\pi_{r+1}^{-b}=\phi_r^{s_0-\ell_rb}\pi_r^{u_0+b_k-\ell'_rb}N^{-m_k}\\=&\;\ga_r^{\ell+c}\,\pi_r^{u_0+b_k-\ell'_rb+(\ell+c)h_r}N^{-m_k}=\ga_r^{\ell+c}\,N^{n_0}g_1^{n_1}\cdots g_{r-1}^{n_{r-1}},
\end{align*}
for some integers $n_0,\dots,n_{r-1}$.
By Lemmas \ref{wrphir} and \ref{gammas},  $v_r(h)=0$ and
$h=\ga_r^{\ell+c}\ga_1^{a_1}\cdots \ga_{r-1}^{a_{r-1}}$,
for some integers $a_1,\dots,a_{r-1}$. Hence,
$$
\ga_{\ty_S}^{-1}\left(\alpha_2+\p\right)_{\p\in\pp_S}=\rd_p(z_r)^{\ell+c}\,\tau_k,
$$
where $\tau_k=\rd_p(z_1)^{a_1}\cdots \rd_p(z_{r-1})^{a_{r-1}}$ is a unit in $\rd_p(A_r)$ which depends only on $S$, $\delta$ and $k$.

Finally, for $\beta$ running on $\bb_{S,\delta_k}^{(r-1)}$, the elements
$$
u_\beta:=\ga_{\ty_S}^{-1}\left(\alpha_3+\p\right)_{\p\in\pp_S}\in\rd_p(A_r)
$$
form a $(\Z/p\Z)$-basis of $\rd_p(A_r)$ by hypothesis. Summing up, we get
$$
\ga_{\ty_S}^{-1}\left(\alpha/\pi_{r+1}(\t)^b+\p\right)_{\p\in\pp_S}=\rd_p(z_r)^c\tau_k\rd_p(c_{d-k+\ell}z_r^{\ell}+\cdots +c_d z_r^k) u_\beta.
$$
Since $\rd_p(z_r)$ is a unit, the absolute constant $\rd_p(z_r)^c$ (depending only on $S$ and $\delta$) may be dropped from all these elements. Denote
$$
\zeta_{k,\beta}=\tau_k\rd_p(c_{d-k+\ell}z_r^{\ell}+\cdots +c_d z_r^k) u_\beta.
$$

Since the pairs $(k,\beta)$ take $f_0f_1\cdots f_r=\dim_{\Z/p\Z}\rd_p(A)$ values, we need only to show that these elements $\zeta_{k,\beta}$ are linearly independent.

Suppose that for some family of elements $a_{k,\beta}\in\Z/p\Z$, we have
\begin{equation}\label{equalzero2}
\sum\nolimits_{k,\beta}a_{k,\beta}\,\zeta_{k,\beta}=0.
\end{equation}
Consider $w_0,\dots,w_{f_r-1}\in \rd_p(A_r)$ such that
$$0=\sum\nolimits_{k,\beta}a_{k,\beta}\,\zeta_{k,\beta}=w_0+w_1\rd_p(z_r)+\cdots +w_{f_r-1}\rd_p(z_r)^{f_r-1}.$$ Lemma \ref{augmentation} shows that all these coefficients are equal to zero. On the other hand, our explicit formulas show that
$$
0=w_{f_r-1}=\tau_{f_r-1}\rd_p(c_d)\sum\nolimits_\beta a_{f_r-1,\beta}\,u_\beta.
$$
Since $\tau_{f_r-1}$ and $\rd_p(c_d)$ are units, we deduce $0=\sum_\beta a_{f_r-1,\beta}\,u_\beta$, leading to $a_{f_r-1,\beta}=0$ for all $\beta\in \bb_{S,\delta_{f_r-1}}^{(r-1)}$, by our hypothesis. Hence, we obtain an identity like  (\ref{equalzero2}) for $0\le k\le f_r-2$. An iteration of this argument shows that $a_{k,\beta}=0$ for all $k,\beta$, so that our family $\zeta_{k,\beta}$ is linearly independent.
\end{proof}

\noindent{\bf Proof of Theorem \ref{pBasis}. }Let $p$ be a prime factor of $N$. By Lemmas \ref{reducedbasis} and \ref{w<1}, it suffices to show that $\bb$ is a $p$-reduced set. By Theorem \ref{criterion} we must prove that $\rd_\delta(\bb_\delta)$ is $(\Z/p\Z)$-linearly independent for all $\delta\in w(\bb)$.

Denote $\bb_{\op{trm},\delta}=\bigcup\nolimits_{S\in\st}\bb_{S,\delta}$.
Let us first discuss the case $\delta=0$. We saw along the proof of Lemma \ref{w<1} that $\bb^0\subset\bb_0$. Consider the splitting:
$$
V=V^0\times V_{\op{trm}},\quad V^0=\prod\nolimits_{\p\in\pp^0}\F_\p,\quad V_{\op{trm}}=\prod\nolimits_{\p\not\in\pp^0}\F_\p.
$$

By the proof of Theorem \ref{denquot} applied to the types of order zero given by the root nodes of $\tcal(f)$, we have
$$w_\p(q(\t))>0,\ \forall\,\p\not\in\pp^0;\qquad
w_\p(\alpha)>0,\ \forall\,\p\in\pp^0,\ \forall\,\alpha\in\bb_{\op{trm},0}.
$$Hence, $\rd_0(\bb^0)\subset V^0\times \{0\}$ and $\rd_0(\bb_{\op{trm},0})\subset \{0\}\times V_{\op{trm}}$.

It is obvious that $\rd_0(\bb^0)$ is linearly independent.
Thus, $\rd_0(\bb_0)$ is linearly independent if and only if
 $\rd_0(\bb_{\op{trm},0})$ is linearly independent.

Since for $\delta>0$ we have $\bb_\delta=\bb_{\op{trm},\delta}$, the proof of the theorem will be complete if we show that  $\rd_\delta(\bb_{\op{trm},\delta})$ is linearly independent for all $\delta\ge0$.

For any given $\delta\in w(\bb)$, let $\mathcal{S}_\delta:=\{S\in\st\mid \bb_{S,\delta}\ne\emptyset\}$. For any $S\in\mathcal{S}_\delta$ write $\rd_\delta(\bb_{S,\delta})=\{\zeta_{S,m}\mid 1\le m\le \#\bb_{S,\delta}\}\subset V$.

Suppose that for some family of elements $a_{S,m}\in\Z/p\Z$, we have
\begin{equation}\label{equalzero}
\sum\nolimits_{S,m}a_{S,m}\,\zeta_{S,m}=0,
\end{equation}
the sum running on $S\in\mathcal{S}_\delta$ and $1\le m\le \#\bb_{S,\delta}$. Take $T\in\mathcal{S}_\delta$ minimal with respect to the partial ordering of domination. By Lemma \ref{dom},
$$
w_\p(\alpha)>\delta,\quad \forall\,\p\in\pp_{T},\ \forall\,\alpha\in\bb_{S,\delta},\ \forall\,S\in\mathcal{S}_\delta, \ S\ne T.
$$
Hence, $\op{pr}_{T}(\zeta_{S,m})=0$ for all $S\in\mathcal{S}_\delta$, $S\ne T$, and all $m$. Thus, if we apply $\op{pr}_{T}$ to both sides of (\ref{equalzero}), we get
$$
\sum\nolimits_ma_{T,m}\op{pr}_{T}(\zeta_{T,m})=0.
$$
By Lemma \ref{bsreduced}, $a_{T,m}=0$, for all $m$. Hence, we get again an equation like (\ref{equalzero}) for $S$ running on $\mathcal{S}_\delta\setminus\{T\}$. An iteration of this argument shows that $a_{S,m}=0$ for all $S,m$. Thus,  $\rd_\delta(\bb_{\op{trm},\delta})$ is $(\Z/p\Z)$-linearly independent.\qed

\subsection{Computation of global integral bases}\label{subsecGlobalBases}

Let $P$ be the product of all prime factors $p$ of $\dsc(f)$ with $\ord_p(\dsc(f))>1$. It is well-known that a $P$-integral basis of $K$  is necessarily a global integral basis of $K$.

Our algorithm finds a splitting $P=N_1\cdots N_k$ for which we are able to compute $N_i$-integral bases $\bb_{N_1},\dots,\bb_{N_k}$ of $K$. This is sufficient for our purpose, because there are standard procedures to patch these bases into a $P$-integral basis.
Along the algorithm we use  the following subroutines:\medskip

\noindent{\tt CoprimeSplitting($d$,\,$N$)}

\noindent By an iterative application of $\op{gcd}$ routines, a proper divisor $d$ of $N$ determines a factorization $N=c_1^{a_1}\cdots c_k^{a_k}$ with pairwise coprime bases $c_1,\dots,c_k$. The routine expresses then each $c_i=d_i^{e_i}$ as a perfect power (with $e_i\ge1$) and outputs the list $[d_1,\dots,d_k]$.\medskip

\noindent{\tt SFD($N$)}

\noindent Computes the squarefree decomposition $N=d_1^{\ell_1}\cdots d_k^{\ell_k}$,  $\ell_1<\cdots<\ell_k$, and  outputs the list of coprime squarefree factors $[d_1,\dots,d_k]$.\bigskip

\noindent{\bf GLOBAL BASIS ALGORITHM}\vskip 1mm

\noindent INPUT:

$-$ A monic irreducible polynomial $f\in \Z[x]$ of degre $n>1$. \medskip

$-$ An integer $D$. \medskip


\sst{1}$ {\tt NBases}\leftarrow [\ ]$

\sst{2}FOR each prime number $p\le n$ such that $\ord_p(D)>1$ DO

\stst{}Apply the classical OM algorithm to compute a $p$-integral basis $\bb_p$

\stst{}Add the pair $[p,\bb_p]$ to {\tt NBases}

\stst{}$D\leftarrow D\,p^{-\ord_p(N)}$


\sst{3}{\tt Moduli} $\leftarrow\{D\}$.

\sst{}{\bf WHILE $\#${\tt Moduli}\;$>0$  DO}

\stst{4}Extract a modulus $N$ from {\tt Moduli} and call {\tt SF-OM}($N,f$)

\stst{5}IF a proper factor $d$ of $N$ is detected THEN
join the output of

\stst{}{\tt CoprimeSplitting}($d,N$) to {\tt Moduli} and
go to step {\bf 4}

\stst{6}IF the output tree $\tcal(f)$ has some ramified leaf THEN

\ststst{}{\tt SFD}($N$). Let $d_1,\dots,d_k$ be the squarefree factors

\ststst{}IF $k>1$ THEN  add  $d_1,\dots,d_k$ to {\tt Moduli} and
go to step {\bf 4}.

\stst{7}Compute the basis $\bb_N$ of Theorem \ref{pBasis} and add $[N,\bb_N]$ to {\tt NBases}

\sst{}{\bf END WHILE}\medskip

\noindent OUTPUT: A list of pairs  $[N,\bb_N]$, where $\bb_N$ is a reduced $N$-integral basis of $K$.

\bigskip

\noindent
{\bf Remarks}

\medskip
$\bullet$ The input integer $D$ will be in general the discriminant of $f$. However, since the computation of this discriminant may be unfeasible, we admit the possibility of working with a partial factorization of it.

\medskip

$\bullet$ The pairs $[N,\bb_N]$ of {\tt NBases} may be patched to obtain a  $D$- integral basis of $K$.   For instance, we can apply the triangulation algorithm given in \cite{mn} to all reduced $N$-integral bases, and then glue them together by means of the Chinese remainder theorem. When $D$ is the discriminant of $f$ or has the same prime factors, this leads to a global integral basis of $K$.

\medskip

$\bullet$  The routine {\tt SFD($N$)} is the bottleneck of the algorithm. However, in some cases the method is efficient because the successive splittings of the discriminant, caused by the SF-OM routine in step {\bf 4}, lead to factors of $N$ which are sufficiently small to admit a feasible performance of {\tt SFD($N$)}.

\section{Examples}

We illustrate the flow of the {\tt Global Basis} algorithm with some examples, which prove its power and efficiency. There are two obvious gains with respect to other existing algorithms:  the  factorization of the discriminant is not necessary, and  square-free decomposition of residual polynomials is used instead of the complete factorization. Thus, finding an SF-OM representation of a polynomial for a given integer $N$ is cheaper than finding its OM representations for all the primes dividing $N$. 

A single SF-type may encode many irreducible types. This compactness of SF-OM representations has a direct impact in the computation of integral bases, because it reduces drastically the amount of $\Z$-linear algebra needed to glue the local bases, which is a time and space expensive task.


The first example we present  exhibits the capability of the {\tt Global Basis} algorithm to detect some factors of the discriminant along the computation of an integral basis. We explain it to a certain detail for a better comprehension of the algorithm.  The second example makes apparent the benefits of allowing non-irreducible residual polynomials. Finally, we introduce an example that illustrates  the compactness of the SF-OM representations.

The three examples are parametrical, the parameters representing integer numbers. Most of the discussions are theoretical, but we include some particular cases which show a number of phenomena occurring along the flow of the algorithm. For these particular cases, we have used  our own  implementation  of the algorithm in {\em Magma}.

Our experimentation suggests that our program is  competitive to other routines for computation of integral bases, with the advantage that it can handle a broader range of number fields.




\subsection{Example 1}

Let us consider the polynomial
  $$
  f =x^4+2Nx^2+N^3(N-1)x+N^2,
  $$
   with $N>4$ an odd squarefree integer. Its discriminant is
   $$
   \dsc(f)=-N^9 (N-1)^2 \left(27 N^5-54 N^4+27 N^3-256\right).
   $$
We first apply the SF-OM algorithm to the pair $f$, $N$. We assume that $N$ is not divisible by 3 just for simplicity.

Since $\rdn(f)=y^4$, we take $g_1=x$, and obtain a  Newton polygon $N_1(f)$ with a unique side of slope -1/2. The residual polynomial of this side is $R_1(f)=y^2+2y+1=(y+1)^2$. We lift the irreducible factor of $R_1(f)$ to $g_2=x^2+N$. The second order Newton polygon $N_2(f)$ has a unique side of slope $-3/2$ joining the points  $(0,7)$ and $(2,4)$. The residual polynomial $R_2(f)=y+1$ is squarefree. Hence, the SF-OM-representation of $f$ with respect to $N$ has a unique type:
$$
\ty_N=(y; (x,-1/2, y+1);(x^2+N,-3/2,y+1)).
$$
In particular, $f$ is irreducible over $\Q_p$ for every prime $p\mid N$.

After theorem \ref{pBasis}, we know that
$$
{\mathcal B}_N=\left\{1, \t,   \frac{\t^2}{N},\frac{\t^3+N\t}{N^2}\right\}
$$
is an $N$-basis of the number field $K=\Q(\t)$ defined by $f$.

Thus, no matter how many prime factors the integer $N$ has, the SF-OM algorithm finds an $N$-basis in one hit. This is a significant  improvement with respect to the classical OM algorithm, which has to work with every prime factor of $N$, and patch all local prime bases to find an $N$-basis.\medskip


Let us now see what happens when we apply the {\tt Global Basis} algorithm to the pair $f$, $D:=\dsc(f)$. In a realistic situation, we cannot take profit of the factorization of $D$ given above.

We must work out the small primes 2 and 3 apart. Thus, we start by applying the SF-OM algorithm to the pair $f, D_1$ where $D_1=|D|/2^{v_2(D)}3^{v_3(D)}$.

In the very beginning, the square-free factorization of $f\pmod{ D_1}$ requires the computation of the GCD of $f$ and its derivative. Along this calculation, when we try to divide $f'$ out by $$f -\frac14xf'=Nx^2+\frac34 N^3(N-1)x+N^2,$$
we detect the factor $N$ of $D_1$, and then its coprime cofactor $D_2=D_1/N^9$.

We start the main loop again with \ {\tt Moduli}=$\{N, D_2 \}$. We discussed already how the algorithm computes an $N$-basis. We set \ {\tt Moduli}=$\{D_2 \}$ and apply the SF-OM algorithm to the pair $f,D_2$.

We compute the squarefree decomposition of $f\pmod{ D_2}$.
Thanks to the coefficient $N^3(N-1)$ in $f$, the computation of $\gcd(f,f')$ detects the factors  coming from $\gcd(N-1,D_2)$. We set \mbox{\tt Moduli}=$\{N_1, D_3\}$, where $N_1, D_3$ are the output of \mbox{\tt CoprimeSplitting}$(N-1,D_2)$.

For the modulus $N_1$ we have $f\equiv(x^2+1)^2\pmod{N_1}$, and
the SF-OM-representation of $f$ with respect to $N_1$ has a unique type of order 1:
$$
\ty_{N_1}=(y^2+1; (x^2+1,-1/2, y+z_0)), \quad z_0=y+(y^2+1)(\Z/N_1\Z)[y].
$$
The power basis $\{1,\t,\t^2,\t^3\}$ is an $N_1$-integral basis.

We have seen so far that, for general $N$, we will find three divisors of $\dsc(f)$, and for two of them the local bases have an specific form.
For the remaining factor $D_3$, a bunch of phenomena can occur, including the chance to find new factors.

For instance, for $N=10007\cdot10009$, we have $N_1=50080031=(N-1)/2$ and $D_3=68041943397686978810459285162708530849445$. When we apply the SF-OM algorithm to the pair $f, D_3$ the obvious 5 dividing $D_3$ is detected in a valuation computation. For both factors 5 and $D_3/5$  the power basis is a local basis. The classical Montes algorithm shows that the power basis is a 6-integral basis. Hence, in this case, ${\mathcal B}_N$  is a global integral basis.

\subsection{Example 2}

Let $p$ be a prime number and take positive integers $r,m$ such that $r<p/2$ and $r<m$.
Consider the polynomial
$$
f_{p,r,m}=  (x^2+p)(x^2+2p)\cdots(x^2+rp)+p^m=x^{2r}+b_2x^{2r-2}+\dots+b_{2r}.
$$
We assume that  $f:=f_{p,r,m}$ is irreducible.
The SF-OM-algorithm applied to $f$ and $p$ yields a unique type of order 1:
$$
\ty_N=(y; (x,-1/2, (y+1)(y+2)\cdots(y+r)),
$$
and the following  $p$-basis of the number $K=\Q(\t)$ field determined by $f$:
$$
\begin{array}{rl}
{\mathcal B}_p=&\left\{    1,\t,\frac{\t^2+b_2}{p},\frac{\t(\t^2+b_2)}{p},\dots,\right.\\ \\
   & \quad\left.\frac{\t^{2r-2}+b_2\t^{2r-4}+\dots+b_{2r-2}}{p^{r-1}},
\frac{\t(\t^{2r-2}+b_2\t^{2r-4}+\dots+b_{2r-2})}{p^{r-1}}\right\}.
\end{array}
$$

We see once more the gain of the SF-OM-algorithm with respect to the classical Montes algorithm, which in this case finds $r$ different types.

Eventually, we might find new factors of the discriminant in the triangulation process of the local bases. For instance, consider the case $f=f_{101,7,11}$. The discriminant   $\dsc(f)$ has 362 decimal figures. The initial square-free factorizations detect the factors $p=101$, and $D=\dsc(f)/(2^{14}   101^{91})$.
The {\tt Global Basis} routine finds $D_1=104065441$, $$D_2=1045681081654964908367435426796223594285104202271660887,$$ and no further splittings.

However, along the triangulation process of the reduced local $D_2$-basis introduced in \cite{mn}, certain valuation computations detect the factors 17 and 37. After all these splittings, in order to compute a global integral basis for $f$, we need to  compute local bases for the moduli:
$$
\begin{array}{l}
2, 17, 37, 101, 104065441, \\
1662450050325858359884635018753932582329259463071003.
\end{array}
$$
Checking the squarefreeness of these  moduli is almost immediate.
Observe that the last two moduli are not prime, but we  have no  need to factor them in order to compute a global integral basis.

\subsection{Example 3}

For an odd positive integer $r$, let
$$
(x-1)(x-2)\cdots(x-r)=x^r+a_{r-1}x^{r-1}+\dots+a_0.
$$

Take a squarefree integer $N>\mx\{|a_0|,\dots,|a_r|\}$. Consider the polynomials:
$$
\as{1.5}
\begin{array}{rl}
\phi&=x^4+N^2(N-1),\\
f&=\phi^{3r}+a_{r-1}N^6x^2\phi^{3(r-1)}+a_{r-2}N^{14}\phi^{3(r-2)}+a_{r-3}N^{20}x^2\phi^{3(r-3)}+\cdots\\
&\qquad\qquad\qquad\qquad\qquad\qquad\qquad\qquad\cdots+\,a_1N^{7(r-1)}\phi^3+ a_0N^{7r-1}x^2.
\end{array}
$$


The SF-OM algorithm applied to the pair $f, N$ generates a unique type of order 2:
$$
\ty_N=(y; (x,-1/2, (y+1)(y-1));(\phi,-2/3,(y-1)\cdots(y-r)).
$$
If $s$ is the number of primes dividing $N$, the type $\ty_N$ has $2rs$ irreducible types attached. We see again that the use of the SF-OM algorithm can be much faster than the classical Montes algorithm, and that it saves a lot of memory space.

\end{document}